\newcommand{\R}{{\mathbbm{R}}} 
\newcommand{\Eb}{{\mathbbm{E}}}
\newcommand{\Nb}{{\mathbbm{N}}}
\renewcommand{\imath}{\textup{i}\hspace{0.05em}}
\DeclareMathOperator{\core}{core}
\DeclareMathOperator{\dist}{dist}
\DeclareMathOperator{\lin}{lin}
\newcommand{\prefeq}{\mathrel{\raisebox{0.035ex}{$\scriptstyle{\unrhd}$}}}
\newcommand{\pref}{\mathrel{\raisebox{0.13ex}{$\scriptstyle{\rhd}$}}}
\newcommand{\notprefeq}{\mathrel{\scriptstyle\ntrianglerighteq}}
\newcommand{\lott}{\mathrel{\raisebox{0.13ex}{$\scriptstyle{\oplus}$}}}
\newcommand{\Z}{\mathcal{Z}}
\newcommand{\F}{\mathcal{F}}
\newcommand{\M}{\mathcal{M}}
\newcommand{\Lc}{\mathcal{L}}
\newcommand{\Pc}{\mathcal{P}}
\newcommand{\B}{\mathcal{B}}
\newcommand{\C}{\mathcal{C}}
\newcommand{\K}{\mathcal{K}}
\newcommand{\Ec}{\mathcal{E}}
\newcommand{\Sc}{\mathcal{S}}
\newcommand{\Q}{\mathcal{Q}}
\newcommand{\Dc}{\mathcal{D}}
\newcommand{\Gc}{\mathcal{G}}
\newcommand{\Jc}{\mathcal{J}}
\newcommand{\1}{\mathbbm{1}}
\newcommand{\Xf}{\mathfrak{X}}
\newcommand{\eqdef}{\mathrel{\overset{\raisebox{-0.02em}{$\scriptstyle\vartriangle$}}{\,=\,}}}
\newcommand{\wto}{\mathrel{\raisebox{-0.2ex}{$\xrightarrow{\,\raisebox{-0.2em}{$\scriptstyle w$}\;}$}}}
\newcommand{\zero}{0}
\newcommand{\ds}{d^{\hspace{-0.04em}\varSigma}\hspace{-0.15em}}
\newcommand{\Dsim}{\mathrel{\overset{\raisebox{-0.02em}{$\scriptstyle\Dc$}}{\sim}}}
\newenvironment{tightlist}[1]{%
    \list{{\textup{(\roman{enumi})}}}{\settowidth\labelwidth{{\textup{(#1)}}}
    \leftmargin\labelwidth \advance\leftmargin\labelsep \itemindent \parindent
    \parsep 0pt plus 1pt minus 1pt \topsep 3pt \itemsep 0pt
    \usecounter{enumi}}}{\endlist}
\newenvironment{tightitemize}{%
    \list{{\textup{$\bullet$}}}{\settowidth\labelwidth{{\textup{}}}
    \leftmargin\labelwidth \advance\leftmargin\labelsep \itemindent \parindent
    \parsep 0pt plus 1pt minus 1pt \topsep 3pt \itemsep 3pt
    }}{\endlist}
\newcommand{\comp}{\circ}
\newcommand{\widebar}{\bar}
\newtheorem{theorem}{Theorem}[section]
\newtheorem{lemma}[theorem]{Lemma}
\newtheorem{definition}[theorem]{Definition}
\newtheorem{corollary}[theorem]{Corollary}
\newtheorem{remark}[theorem]{Remark}
\newenvironment{proof}{\emph{Proof.}}
\title{Common Mathematical Foundations of  Expected Utility\\ 
and  Dual Utility Theories\thanks{To appear in \emph{SIAM Journal on Optimization}}}
\author{Darinka Dentcheva\thanks{Stevens Institute of Technology,
Department of Mathematical Sciences, Castle Point on Hudson, Hoboken, NJ 07030,  Email: {darinka.dentcheva@stevens.edu}.}
\and
Andrzej Ruszczy\'{n}ski\thanks{Rutgers University,
Department of Management Science and Information Systems and RUTCOR,
 94 Rockefeller Rd, Piscataway, NJ 08854, USA, Email: {rusz@business.rutgers.edu}.}}
\date{March 1, 2012}
\begin{document}
\maketitle

\begin{abstract}
We show that the main results of the expected utility  and dual utility theories can be derived in a unified
way from two fundamental mathematical ideas: the separation principle of convex analysis, and integral representations of continuous linear functionals from functional analysis. Our analysis reveals the dual character of utility functions.
We also derive new integral representations of dual utility models.

\emph{Keywords}:
Preferences, Utility Functions, Rank Dependent Utility Functions, Separation, Choquet Representation.

\emph{AMS}:
Primary: 91B16, 47N10.

\end{abstract}

\pagestyle{myheadings}
\thispagestyle{plain}

\section{Introduction}

The theory of expected utility and the dual utility theory are two very popular and widely accepted approaches for quantification of preferences and a basis of decisions under uncertainty.
These classical topics in economics are covered in plentitude of textbooks and monographs  and represent a benchmark for every other quantitative decision theory.

The expected utility theory of von Neumann and Morgernstern \cite{vNeumann:1944}, and to the dual utility theory of Quiggin \cite{Quiggin:1982} and Yaari \cite{Yaari:1987} are often compared and contrasted (see, \emph{e.g.}, \cite{Fishburn-88}).
Our objective is to show that they have common mathematical roots and their main results can be derived in a unified
way from two mathematical ideas: separation principles of convex analysis, and integral representation theorems for continuous linear functionals.
Our analysis follows similar lines of argument in both cases,  accounting only for the differences of the corresponding prospect spaces.
Our approach reveals the dual nature of both utility functions as continuous linear functionals
on the corresponding prospect spaces. It also elucidates the mathematical limitations of the two approaches and their boundaries.
In addition to this, we obtain new representations of dual utility.


The paper is organized as follows. We briefly review basic concepts of orders and their numerical representation in \S \ref{s:1.1}.
In \S \ref{s:eut}, we focus on the expected utility theory in the prospect space of probability measures on some Polish space of outcomes.
In \S \ref{s:dut}, we derive the dual utility theory in the prospect space of quantile functions. Finally, \S \ref{s:vectors} translates
the earlier results to the prospect spaces of random variables.

\section{Numerical Representation of Preference Relations}
\label{s:1.1}

We start our presentation from the analysis of abstract preference relations in a certain space $\Xf$, which we call the \emph{prospect space}.
We assume that a \emph{preference relation } among prospects is defined by a certain \emph{total preorder}, that is, a binary relation
$\prefeq$ on $\Xf$, which is reflexive, transitive and complete.
The corresponding {indifference relation} $\sim$ is defined in a usual way: $z\sim v$, if $z\prefeq v$ and $v\prefeq z$.
We say that $z$ is \emph{strictly preferred} over $v$ and write it  $z \pref v$, if $z \prefeq v$, and $v \notprefeq z$.

If $\Xf$ is a topological space, we call a preference relation $\prefeq$ \emph{continuous}, if
for every $z\in\Xf$ the sets $\{v\in \Xf: v \prefeq z\}$ and $\{v\in \Xf: z \prefeq v\}$ are closed.

A functional $U:\Xf\to\R$  is a \emph{numerical representation} of the preference relation $\prefeq$ on~$\Xf$, if
\[
z \pref v \iff  U(z) > U(v).
\]
The following classical theorem is the theoretical foundation of the utility theory.
\begin{theorem}
\label{t:Debreu}
Suppose the total preorder $\prefeq$ on a topological space $\Xf$ is continuous and one of the following conditions is satisfied:
\begin{tightlist}{ii}
\item
$\Xf$ is a separable and connected topological space; or
\item
The topology of $\Xf$ has a countable base.
\end{tightlist}
Then there exists a continuous numerical representation of\, $\prefeq$.
\end{theorem}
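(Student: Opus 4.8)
The plan is the classical two-stage argument (following Eilenberg, Debreu, and Rader): first manufacture \emph{some} numerical representation $u_0:\Xf\to\R$ out of the order together with the topology, ignoring continuity, and then compose it with a strictly increasing reparametrization $g$ of the line so that $u:=g\circ u_0$ becomes continuous. The single elementary fact I would use throughout is that, by continuity of $\prefeq$ together with completeness, for each $z\in\Xf$ the sets $\{x\in\Xf: z\pref x\}=\Xf\setminus\{x: x\prefeq z\}$ and $\{x\in\Xf: x\pref z\}=\Xf\setminus\{x: z\prefeq x\}$ are open; in particular every ``open order interval'' $\{x: z\pref x\pref v\}$ is open.

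\emph{Step 1 (a numerical representation).} Under hypothesis~(ii) I would fix a countable base $\{B_n\}_{n\in\Nb}$ of the topology and set $u_0(z)=\sum\{\,2^{-n}: B_n\subseteq\{x: z\pref x\}\,\}$, a sum bounded by $1$. If $v\prefeq z$ then $\{x: v\pref x\}\subseteq\{x: z\pref x\}$, so $u_0$ is isotone; and if $z\pref v$, then $v$ lies in the open set $\{x: z\pref x\}$, so $v\in B_n\subseteq\{x: z\pref x\}$ for some $n$, and this index raises $u_0(z)$ strictly above $u_0(v)$ because it cannot contribute to $u_0(v)$ (as $v\notin\{x: v\pref x\}$). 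With totality this yields $z\pref v\iff u_0(z)>u_0(v)$. Under hypothesis~(i) I would instead take a countable dense set $\{d_k\}_{k\in\Nb}$ furnished by separability, after first noting that connectedness forbids jumps: if $a\pref b$ while no $x$ satisfies $a\pref x\pref b$, then $\Xf=\{x: x\prefeq a\}\cup\{x: b\prefeq x\}$ partitions $\Xf$ into two nonempty closed sets --- disjoint because $a\pref b$, exhaustive because of the missing intermediate element --- contradicting connectedness. Consequently, whenever $z\pref v$ the open interval $\{x: z\pref x\pref v\}$ is nonempty and hence meets $\{d_k\}$, so the analogous formula $u_0(z)=\sum\{\,2^{-k}: z\pref d_k\,\}$ is a numerical representation by the identical computation. (Equivalently one may cite the Birkhoff--Debreu lemma: a total preorder with a countable order-dense subset admits a numerical representation.)

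\emph{Step 2 (upgrading to continuity).} The function $u_0$ just built is typically a ``staircase'' and need not be continuous, for its range $T_0=u_0(\Xf)\subseteq\R$ may have gaps with an attained lower endpoint, and across such a gap a set of the form $\{z: u_0(z)>\alpha\}$ is a closed contour set instead of an open one. I would remove the removable gaps --- those with an unattained endpoint, behind which lie infinitely many values of $u_0$ that can be stretched out --- by Debreu's open-gap lemma, obtaining a strictly increasing $g:T_0\to\R$ such that the only gaps of $T:=g(T_0)$ with an attained endpoint on both sides are the ones forced by a genuine jump of $\prefeq$. Then $u:=g\circ u_0$ is again a representation, and I would verify its continuity by examining $\{z: u(z)>\alpha\}$ and $\{z: u(z)<\alpha\}$ for each $\alpha\in\R$: they equal $\Xf$ or $\emptyset$ when $\alpha$ lies beyond the range; they equal the open contour set $\{x: x\pref w\}$, resp.\ $\{x: w\pref x\}$, when $\alpha=u(w)\in T$; they equal a countable union of such open contour sets when $\alpha$ sits at a gap or omitted limit point of $T$ with unattained relevant endpoint (using that this endpoint is then approached from the appropriate side by $T$); and they are again open at a jump, because there the disconnection argument of Step~1 makes the relevant contour set clopen. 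In all cases these sets are open, so $u$ is continuous; under~(i) there are no jumps, so $T$ is an interval and only the first three cases occur.

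The real weight of the argument sits in Step~2, and within it in Debreu's open-gap lemma, whose proof is a delicate simultaneous rescaling of the (countably many) removable gaps of $T_0$; the subsequent bookkeeping that converts ``the only surviving gaps are jumps'' into openness of the contour sets of $u$ is routine but fiddly and reuses the jump $\Rightarrow$ disconnection observation. The remaining points needing care are the check, under~(ii), that the base-driven $u_0$ genuinely separates distinct $\prefeq$-classes --- the open-set argument above --- and, under~(i), the no-jumps argument, which is exactly where the connectedness hypothesis is spent.
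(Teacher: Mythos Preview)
The paper does not actually prove this theorem: it is stated as a classical result, with Remark~\ref{l:gaps} attributing case~(i) to Eilenberg and case~(ii) to Debreu, Rader, and ultimately Jaffray. There is therefore no ``paper's own proof'' to compare against; your sketch reproduces precisely the standard two-stage argument (Rader-type construction of a raw representation, followed by Debreu's open-gap lemma) that those references contain.

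Your outline is essentially correct, but two points of phrasing invite tightening. First, what the open-gap lemma delivers is that \emph{every} gap of $T=g(T_0)$ is either open on both sides or closed on both sides---half-open gaps are what obstruct continuity, and those are exactly what get removed. Your formulation (``the only gaps with an attained endpoint on both sides are the ones forced by a jump'') is a consequence of this, not the content of the lemma itself. Second, in the continuity check at a closed gap $[u(w_a),u(w_b)]$ under~(ii), you do not need to argue that the contour set is clopen via any disconnection: since the \emph{left} endpoint $u(w_a)$ is attained, $\{z:u(z)>\alpha\}=\{z:z\pref w_a\}$ is open directly. The clopen observation is valid but superfluous here.
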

\begin{remark}
\label{l:gaps}
{\rm
The assertion under (i) is due to \cite[{\S}6]{Eilenberg}. The second case (under (ii)) was announced
in \cite[Thm. II]{Debreu:1954} and corrected in \cite[Thm. 1]{Rader:1963}, but both proofs contained errors.
They were corrected again in \cite{Debreu:1964}; a short and clear proof was eventually provided by \cite{Jaffray}.
For extensions and further discussion, see \cite{BridgesMehta,Mehta:2004}.
}
\end{remark}

This is the starting point of our considerations. The expected utility theory and the dual utility theory derive properties of the numerical
representation $U(\cdot)$ and its integral representations in specific prospect spaces and under additional conditions on the pre\-order~$\prefeq$.
These conditions are associated with the operation of forming convex combinations of prospects.
In the expected utility theory, the prospects are probability distributions,
and their convex combinations correspond to lotteries.
The dual utility theory uses convex combinations of comonotonic real random variables, which translates to forming
convex combinations of quantile functions.

It is evident that convexity in some underlying vector space is a key property in the system of axioms of the expected utility
and dual utility models.
Both theories have been developed using different mathematical approaches and specialized tools.
Our objective is to show that they can be deduced in a unified way
from the fundamental separation theorem of convex analysis,
and from functional analysis results about integral representation of continuous linear functionals in topological vector spaces.

The foundation of our approach is the separation principle for convex sets having non\-empty algebraic interiors.
 The \emph{algebraic interior} of a convex set $A$ in a vector space $\mathcal{Y}$ is defined as follows:
\[
\core(A) = \big\{x\in A: \forall\,(d\in \mathcal{Y})\;\exists\,(t>0)\ x+t d \in A\big\}.
\]
The following separation theorem is due to \cite{Eidelheit:1936} and \cite{Dieudonne:1941}; see also \cite{Klee:1951}.
\begin{theorem}
\label{t:sepcore}
Suppose $\mathcal{Y}$ is a vector space and $A\subset \mathcal{Y}$ is a convex set. If $\core(A)\ne \emptyset$ and
$x\not\in \core(A)$, then there exists a linear functional $\ell$ on $\mathcal{Y}$
such that $\ell(x) < \ell(y)$ for all $y\in \core(A)$.
\end{theorem}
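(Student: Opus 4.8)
The plan is to derive the separation from the algebraic (dominated-extension) form of the Hahn--Banach theorem via a Minkowski gauge, following the classical route; no topology on $\mathcal{Y}$ is needed. First I would normalise: fix a point $a_0\in\core(A)$, which exists by hypothesis, and pass to $B=\core(A)-a_0$, so that $0\in B$. Since $y\mapsto\ell(y)-\ell(a_0)$ is again linear, producing $\ell$ with $\ell(x)<\ell(y)$ for all $y\in\core(A)$ is equivalent to producing a linear functional that strictly separates $x-a_0$ from $B$.

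Second, I would record two elementary facts about algebraic interiors of convex sets. The \emph{line-segment principle}: if $a_0\in\core(A)$ and $a_1\in A$, then $\lambda a_1+(1-\lambda)a_0\in\core(A)$ for all $\lambda\in[0,1)$; this is a direct convexity computation from the definition of $\core$. A consequence, obtained by applying this principle with $a_0$ itself as the interior endpoint, is that $\core(A)$ is \emph{algebraically open}, i.e. $\core(\core(A))=\core(A)$; hence $B$ is convex, absorbing (as $0\in\core(B)$), and satisfies $B=\core(B)$. I would then introduce the gauge $p(y)=\inf\{t>0:\,y/t\in B\}$, which is finite since $B$ is absorbing and is positively homogeneous and subadditive since $B$ is convex with $0\in B$. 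Algebraic openness yields the key identity $B=\{y\in\mathcal{Y}:p(y)<1\}$: for $\supseteq$, given $b\in B=\core(B)$ the direction $d=b$ gives $(1+s)b\in B$ for some $s>0$, whence $p(b)\le 1/(1+s)<1$; for $\subseteq$, $p(y)<1$ furnishes $t\in(0,1)$ with $y/t\in B$, and $y=t(y/t)+(1-t)\cdot0\in B$. Since $x\notin\core(A)$ we get $x-a_0\notin B$, so $p(x-a_0)\ge 1$, and in particular $x-a_0\ne0$.

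Third, the Hahn--Banach step. On the line $\R(x-a_0)$ define the linear functional $\ell_0\big(\lambda(x-a_0)\big)=\lambda\,p(x-a_0)$; it satisfies $\ell_0\le p$ on that line (equality for $\lambda\ge0$ by homogeneity, and $\ell_0\le0\le p$ for $\lambda<0$). The dominated-extension theorem extends $\ell_0$ to a linear functional $g$ on all of $\mathcal{Y}$ with $g\le p$ everywhere and $g(x-a_0)=p(x-a_0)\ge1$. Then for every $y\in\core(A)$ we have $y-a_0\in B$, so $g(y-a_0)\le p(y-a_0)<1\le g(x-a_0)$; adding $g(a_0)$ gives $g(y)<g(x)$. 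Taking $\ell=-g$ yields the asserted functional with $\ell(x)<\ell(y)$ for all $y\in\core(A)$ (which in particular forces $\ell\ne0$, as $\core(A)\ne\emptyset$).

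The main obstacle is exactly that the hypothesis is algebraic: $\core(A)$ need not carry any topology, so the geometric Hahn--Banach separation theorem for open convex sets is not directly available. Everything reduces to showing $\core(A)$ behaves like an open set — the line-segment principle and the gauge identity $B=\{p<1\}$ — after which the standard gauge-plus-Hahn--Banach argument applies verbatim. One should also note that $B$ need not be balanced, so $p$ need not be a seminorm; this is why the sublinear dominated-extension form of Hahn--Banach, rather than its seminorm version, is the appropriate tool.
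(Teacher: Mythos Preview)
Your argument is correct. The paper does not supply its own proof of this theorem; it is stated as a classical result and attributed to Eidelheit and Dieudonn\'e (with a reference to Klee), so there is no ``paper's proof'' to compare against. What you have written is precisely the standard route to the Eidelheit--Dieudonn\'e separation theorem: translate so that $0$ lies in the algebraic interior, use the line-segment principle to see that $\core(A)$ is convex and algebraically open, form the Minkowski gauge $p$ of $B=\core(A)-a_0$, identify $B=\{p<1\}$, and then apply the sublinear Hahn--Banach extension to the one-dimensional functional $\lambda(x-a_0)\mapsto\lambda\,p(x-a_0)$. Each step is sound, including your care to note that $p$ is only sublinear (not a seminorm) and that the purely algebraic hypothesis forces one to work with $\core$ rather than a topological interior.
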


In the development of the expected utility theory of von Neumann and Morgenstern in \S 3, and of
the dual utility theory of Yaari and Quiggin in \S \ref{s:dut}, we apply the same method:
\begin{tightitemize}
\item
Embedding of the prospect space into an appropriate vector space;
\item
Representation of the set of pairs of comparable prospects by a convex set with a nonempty algebraic interior;
\item
Application of the separation theorem to establish the existence of an affine numerical representation;
\item
Application of an appropriate integral representation theorem for continuous linear functionals to derive the existence of utility
and dual utility functions.
\end{tightitemize}

\section{Expected Utility Theory}
\label{s:eut}

\subsection{The Prospect Space of Distributions}
\label{s:prospect-lotteries}

Given  a Polish space $\Sc$, equipped with its $\sigma$-algebra $\B$ of Borel sets, we consider the set $\Pc(\Sc)$ of probability measures on $\Sc$.
The theory of expected utility can be formulated in a rather general way for the prospect space $\Xf=\Pc(\Sc)$.

We assume that the preference relation $\prefeq$ satisfies two additional conditions:
\begin{description}
\item[\emph{Independence Axiom:}] For all $\mu$, $\nu$, and $\lambda$ in  $\Pc(\Sc)$ one has
\[
\mu \pref \nu \;\Longrightarrow\;
\alpha \mu + (1-\alpha) \lambda \pref \alpha \nu + (1-\alpha) \lambda,\quad \forall\, \alpha\in (0,1),
\]
\item[\emph{Archimedean Axiom:}] For all $\mu$, $\nu$, and $\lambda$ in  $\Pc(\Sc)$, satisfying the relations
$\mu \pref \nu \pref \lambda$,
there exist $\alpha,\beta\in (0,1)$ such that
\[
\alpha \mu + (1-\alpha) \lambda \pref \nu \pref \beta \mu + (1-\beta) \lambda.
\]
\end{description}
These are exactly the conditions assumed in the pioneering work \cite{vNeumann:1944}
(see also \cite[{\S}8.2,{\S}8.3]{Fishburn70}, \cite[{\S}2.2]{Fishburn82}, \cite[{\S}2.2]{FollmerSchied}, \cite{HersteinMilnor}).

Our idea is to exploit convexity in a more transparent fashion.
We derive the following properties of
a preorder satisfying the independence and Archimede\-an axioms.
\begin{lemma}
\label{l:indiff}
Suppose a total preorder $\prefeq$ on $\Pc(\Sc)$ satisfies
the independence axiom. Then for every $\mu\in \Pc(\Sc)$ the indifference set $\{\nu\in \Pc(\Sc): \nu \sim \mu\}$ is convex.
\end{lemma}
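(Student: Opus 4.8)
The plan is to show that if $\nu_1 \sim \mu$ and $\nu_2 \sim \mu$, then $\alpha\nu_1 + (1-\alpha)\nu_2 \sim \mu$ for every $\alpha \in (0,1)$. Since $\prefeq$ is complete, it suffices to rule out both strict preferences $\alpha\nu_1 + (1-\alpha)\nu_2 \pref \mu$ and $\mu \pref \alpha\nu_1 + (1-\alpha)\nu_2$. Without loss of generality, I would treat the case $\mu \pref \alpha\nu_1 + (1-\alpha)\nu_2$; the other case is symmetric (or can be obtained by reversing the roles of the measures, since the argument uses only the independence axiom, which is stated for all prospects).

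The key step is to compare $\mu$ against the convex combination by using the independence axiom twice to build a telescoping chain. Starting from the hypothesis $\mu \pref \alpha\nu_1 + (1-\alpha)\nu_2$: first, apply the independence axiom to $\mu \prefeq \nu_1$ — but note the axiom as stated is for strict preference, so I would instead argue via a short auxiliary observation. The cleanest route: apply the independence axiom to the relation $\mu \pref \alpha\nu_1 + (1-\alpha)\nu_2$ itself, mixing with $\nu_2$ — wait, the right structure is to note that from $\nu_1 \sim \mu$ we cannot yet mix indifferences directly. So I would instead argue by contradiction differently: suppose $\mu \pref \alpha\nu_1 + (1-\alpha)\nu_2 =: \lambda$. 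Since $\nu_1 \sim \mu$, we have $\nu_1 \pref \lambda$ as well (indifference composed with strict preference gives strict preference, a standard consequence of transitivity and completeness). Applying the independence axiom to $\nu_1 \pref \lambda$ with weight $\alpha$ and mixing prospect $\nu_2$ gives
\[
\alpha \nu_1 + (1-\alpha)\nu_2 \;\pref\; \alpha \lambda + (1-\alpha)\nu_2,
\]
that is, $\lambda \pref \alpha\lambda + (1-\alpha)\nu_2$. By the same reasoning, since $\nu_2 \sim \mu \pref \lambda$, i.e. $\nu_2 \pref \lambda$, the independence axiom applied with mixing prospect $\alpha\lambda + (1-\alpha) \cdot(\text{something})$ — here I would iterate: from $\lambda \pref \alpha\lambda + (1-\alpha)\nu_2$ and $\nu_2 \pref \lambda$, one more application of independence yields a chain driving $\nu_2$ strictly below $\lambda$ indefinitely, contradicting $\nu_2 \pref \lambda$ together with transitivity — more precisely, I would derive $\lambda \pref \nu_2$ and combine with $\nu_2 \pref \lambda$ for an outright contradiction with completeness/antisymmetry of $\pref$.

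The main obstacle is purely bookkeeping: the independence axiom is phrased for strict preference only, so I must first record the elementary fact that $z \sim z'$ and $z' \pref v$ imply $z \pref v$ (and symmetrically on the right), which follows from transitivity of $\prefeq$ plus the definition of $\pref$. Once that is in hand, the proof is a two- or three-step application of the independence axiom producing $\lambda \pref \lambda$, the desired contradiction. I expect the write-up to be under half a page.
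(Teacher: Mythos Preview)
Your plan is correct and matches the paper's proof: assume the convex combination $\lambda=\alpha\nu_1+(1-\alpha)\nu_2$ is strictly dominated, transfer this via transitivity to $\nu_1\pref\lambda$ and $\nu_2\pref\lambda$, and apply the independence axiom twice to produce $\lambda\pref\lambda$. For the closing step, do not aim for $\lambda\pref\nu_2$; instead mix $\nu_2\pref\lambda$ with $\lambda$ at weight $1-\alpha$ to get $\alpha\lambda+(1-\alpha)\nu_2\pref\lambda$, which together with your earlier $\lambda\pref\alpha\lambda+(1-\alpha)\nu_2$ yields the contradiction directly.
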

\begin{proof}
Let $\nu\sim\mu$ and $\lambda\sim\mu$.  Suppose $(1-\alpha) \nu + \alpha \lambda \pref \nu$ for some $\alpha\in (0,1)$. Then also
 $(1-\alpha) \nu + \alpha \lambda \pref \lambda$.
Using the independence axiom with these two relations, we obtain contradiction as follows:
 \begin{align*}
(1-\alpha) \nu + \alpha \lambda &= (1-\alpha)\big[ (1-\alpha) \nu + \alpha \lambda\big] + \alpha \big[ (1-\alpha) \nu + \alpha \lambda\big]\\
&\pref (1-\alpha)\nu + \alpha \big[ (1-\alpha) \nu + \alpha \lambda\big]
\pref (1-\alpha)\nu + \alpha \lambda.
 \end{align*}
The case when $\nu \pref (1-\alpha) \nu + \alpha \lambda$ is excluded in a similar way. We conclude that  $(1-\alpha) \nu + \alpha \lambda \sim \mu$,
for all $\alpha\in (0,1)$.
\end{proof}
\begin{remark}
\label{r:indiff}
{\rm
Lemma \ref{l:indiff} derives the properties of \emph{quasi-concavity}
and \emph{quasi-convexity},
that is, \emph{quasi-linearity} of the preorder~$\prefeq$ (see, \emph{e.g.},
\cite[{\S} 9.2]{Quiggin:1993}, and the references therein). The property of quasi-concavity
is called \emph{uncertainty aversion} in \cite{GilboaSchmeidler,Schmeidler:1989}.
}
\end{remark}
\begin{lemma}
\label{l:arch2}
Suppose a total preorder $\prefeq$ on $\Pc(\Sc)$ satisfies
the independence and Archi\-me\-de\-an axioms. Then for all $\mu,\nu\in \Pc(\Sc)$, satisfying the relation
$\mu \pref \nu$, and for all $\lambda \in \Pc(\Sc)$, there exists $\bar{\alpha}>0$ such that
\begin{equation}
\label{arch2}
(1-\alpha) \mu + \alpha \lambda \pref \nu \quad \text{and} \quad
\mu \pref (1-\alpha) \nu + \alpha \lambda,\quad \forall\,\alpha\in [0,\bar{\alpha}].
\end{equation}
\end{lemma}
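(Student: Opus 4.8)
The plan is to prove the two relations in \eqref{arch2} one at a time and then take $\bar\alpha$ to be the smaller of the two bounds obtained. The engine of both parts is a companion to Lemma~\ref{l:indiff}: for every $\rho\in\Pc(\Sc)$ the strict preference sets $\{z\in\Pc(\Sc):z\pref\rho\}$ and $\{z\in\Pc(\Sc):\rho\pref z\}$ are convex. I would prove this by applying the independence axiom twice: if $a\pref\rho$ and $b\pref\rho$ then $(1-\alpha)a+\alpha b\pref(1-\alpha)\rho+\alpha b\pref\rho$, and transitivity of $\pref$ closes it; the lower set is symmetric. The point of this observation is a reduction. Since $\mu\pref\nu$ already puts $\mu$ in the convex set $\{z:z\pref\nu\}$, to obtain the first relation for every $\alpha\in[0,\bar\alpha]$ it suffices to produce a \emph{single} value $\bar\alpha>0$ with $(1-\bar\alpha)\mu+\bar\alpha\lambda\pref\nu$; indeed, $\{(1-\alpha)\mu+\alpha\lambda:\alpha\in[0,\bar\alpha]\}$ is exactly the segment joining $\mu$ and $(1-\bar\alpha)\mu+\bar\alpha\lambda$ (write $\alpha=s\bar\alpha$), so it lies in $\{z:z\pref\nu\}$. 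The analogous reduction for the second relation uses the convex set $\{z:\mu\pref z\}$, which contains $\nu$, and the segment joining $\nu$ to $(1-\bar\alpha)\nu+\bar\alpha\lambda$.

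With the reduction in hand I would split on the position of $\lambda$. For the first relation: if $\lambda\prefeq\nu$, then the independence axiom gives $(1-\alpha)\mu+\alpha\lambda\pref(1-\alpha)\nu+\alpha\lambda$ for $\alpha\in(0,1)$, while $(1-\alpha)\nu+\alpha\lambda\prefeq\nu$ follows immediately from $\lambda\prefeq\nu$ (via the independence axiom when $\lambda\pref\nu$, via Lemma~\ref{l:indiff} when $\lambda\sim\nu$), so that in fact $(1-\alpha)\mu+\alpha\lambda\pref\nu$ for all $\alpha\in[0,1)$ and any $\bar\alpha\in(0,1)$ will do. If on the contrary $\nu\pref\lambda$, then $\mu\pref\nu\pref\lambda$ is a strict chain and the Archimedean axiom delivers $\alpha_0\in(0,1)$ with $\alpha_0\mu+(1-\alpha_0)\lambda\pref\nu$; setting $\bar\alpha=1-\alpha_0$ and invoking the reduction settles this relation. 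The second relation is treated symmetrically: if $\mu\prefeq\lambda$ one argues directly, using that $\mu\prefeq(1-\alpha)\mu+\alpha\lambda\pref(1-\alpha)\nu+\alpha\lambda$ (independence, and Lemma~\ref{l:indiff} in the degenerate case $\mu\sim\lambda$); and if $\lambda\pref\mu$ one uses the strict chain $\lambda\pref\mu\pref\nu$ together with the second inequality of the Archimedean axiom to obtain $\beta_0\in(0,1)$ with $\mu\pref(1-\beta_0)\nu+\beta_0\lambda$, then invokes the reduction. Finally $\bar\alpha$ is the minimum of the two bounds, which is positive.

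I do not expect a deep obstacle: the content is carried entirely by convexity of the strict preference sets (from the independence axiom), and the Archimedean axiom is used exactly in the one situation it was designed for, a strict chain of three prospects — everything else is diluting $\mu$ or $\nu$ with something no better than $\nu$ (respectively, no better than $\mu$). The mildly delicate bookkeeping is: confirming that the composite relations that appear (such as $x\pref y$ with $y\prefeq z$, or $x\prefeq y$ with $y\pref z$) do give $x\pref z$, which is a standard property of total preorders; checking the affine reparametrization $(1-\alpha)\mu+\alpha\lambda=(1-s)\mu+s\big[(1-\bar\alpha)\mu+\bar\alpha\lambda\big]$ with $\alpha=s\bar\alpha$, so that convexity applies on all of $[0,\bar\alpha]$; and keeping the indifference sub-cases $\lambda\sim\nu$ and $\mu\sim\lambda$ apart, where Lemma~\ref{l:indiff}, rather than the independence axiom itself, is what one needs.
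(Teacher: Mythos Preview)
Your proof is correct and follows essentially the same route as the paper: a case split on the position of $\lambda$ relative to $\nu$ (resp.\ $\mu$), with the Archimedean axiom invoked exactly when a strict chain appears, and the extension to all $\alpha\in[0,\bar\alpha]$ obtained via the convexity of the strict preference set, which the paper carries out inline rather than isolating as a lemma. The only cosmetic difference is that the paper handles the second relation by applying the first to the reversed preorder $\pref_{-1}$ instead of repeating the argument explicitly.
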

\begin{proof}
We focus on the left relation in \eqref{arch2} and consider three cases.

\noindent
\emph{Case 1:} $\nu\pref\lambda$. The left relation in \eqref{arch2} is true for some $\bar{\alpha}_1\in (0,1)$,
owing to the Archimedean axiom.
If $\alpha\in [0,\bar{\alpha}_1]$ then for $\beta=\alpha/\bar{\alpha}_1\in [0,1]$  the independence axiom yields
\[
(1-\alpha) \mu + \alpha \lambda  =
(1-\beta) \mu   + \beta  \big[(1-\bar{\alpha}_1) \mu + \bar{\alpha}_1 \lambda \big] \pref (1-\beta) \mu + \beta \nu \pref \nu.
\]

\noindent
\emph{Case 2:} $\lambda \pref \nu$. Applying the independence axiom twice, we obtain
\[
(1-\alpha) \mu + \alpha \lambda \pref (1-\alpha) \nu + \alpha \lambda \pref \nu,\quad
\forall\,\alpha \in [0,1).
\]
\noindent
\emph{Case 3:} $\lambda \sim \nu$. By virtue of Lemma \ref{l:indiff},  $(1-\alpha) \nu + \alpha \lambda \sim \nu$ for all $\alpha\in [0,1)$, and
the left relation in \eqref{arch2} follows  from the in\-de\-pendence axiom.

This proves the left relation in \eqref{arch2} for all $\alpha \in [0,\bar{\alpha}_1]$ with some $\bar{\alpha}_1 >0$.

Reversing the preference relation, that is, defining $\nu \pref_{\!_{-1}} \mu \iff \mu \pref \nu$, the right relation in \eqref{arch2} follows analogously. We infer the existence of  some $\bar{\alpha}_2 >0$, such that
the right relation in \eqref{arch2} is true for all $\alpha \in [0,\bar{\alpha}_2]$.
Setting $\bar{\alpha} = \min\{\bar{\alpha}_1,\bar{\alpha}_2\}$ we obtain the assertion of the lemma.
\end{proof}

\subsection{Affine Numerical Representation}
\label{s:affine-utility}

The set $\Pc(\Sc)$ is a convex subset of the vector space $\M(\Sc)$ of signed regular finite measures on $\Sc$.
It is also convenient for our derivations to consider the linear subspace $\M_0(\Sc) \subset \M(\Sc)$ of signed regular measures $\mu$
such that $\mu(\Sc)=0$.

The main theorem of this section is due to \cite{vNeumann:1944}. Its complicated constructive proof has been
since reproduced in many sources (see, e.g.,  \cite[Thm. 2.21]{FollmerSchied} and the references therein),
or emulated in the setting of  \emph{mixture sets}  (see, e.g., \cite[Thm. 8.4]{Fishburn70}, \cite[Thm. 2, Ch. 2]{Fishburn82},
\cite{HersteinMilnor}, and the references therein).
Our proof, as indicated in the introduction, is based on the separation theorem.
\begin{theorem}
\label{t:vNM1}
Suppose the total preorder $\prefeq$ on $\Pc(\Sc)$ satisfies the independence and Archi\-me\-dean axioms. Then there exists a linear functional
on $\M(\Sc)$, whose restriction to $\Pc(\Sc)$ is a numerical representation of $\,\prefeq$.
\end{theorem}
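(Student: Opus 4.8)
The plan is to apply the separation theorem (Theorem~\ref{t:sepcore}) in the space $\M_0(\Sc)$ to a convex cone built from the strict preference relation, then pull the resulting linear functional back to $\M(\Sc)$ and $\Pc(\Sc)$.

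First I would fix a reference measure $\mu_0\in\Pc(\Sc)$ and form the set
\[
C = \bigl\{\, t(\mu-\nu) : t>0,\ \mu,\nu\in\Pc(\Sc),\ \mu\pref\nu \,\bigr\}\subset \M_0(\Sc).
\]
Using the independence axiom one checks that $C$ is a convex cone: closure under positive scaling is by definition, and closure under addition follows because $t(\mu-\nu)+s(\mu'-\nu')$ can be rewritten, after rescaling, as a positive multiple of a difference of two convex combinations, which are still strictly ordered by the independence axiom (this is essentially the computation already used in Lemma~\ref{l:indiff}). The key point is then that $C$ has nonempty algebraic interior in $\M_0(\Sc)$: I would show $\mu-\nu\in\core(C)$ whenever $\mu\pref\nu$. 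Given any direction $\lambda'-\lambda''\in\M_0(\Sc)$ with $\lambda',\lambda''\in\Pc(\Sc)$, Lemma~\ref{l:arch2} provides $\bar\alpha>0$ with $(1-\alpha)\mu+\alpha\lambda'\pref(1-\alpha)\nu+\alpha\lambda''$ for small $\alpha>0$ (applying the lemma to the direction $\lambda'$ on the left and $\lambda''$ on the right, then combining via independence), so $(\mu-\nu)+\alpha\bigl((\lambda'-\lambda'')-(\mu-\nu)\bigr)\in C$; since differences of probability measures span $\M_0(\Sc)$, this gives the algebraic-interior property.

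Next, note that $\zero\notin\core(C)$: if it were, then $\mu\pref\mu$ would follow for some $\mu$ (one can perturb $0$ into $C$ and back), contradicting reflexivity; more carefully, $0\in\core(C)$ would force $C=\M_0(\Sc)$, hence both $\mu-\nu$ and $\nu-\mu$ lie in $C$, which after unwinding the definitions and using the independence axiom yields $\rho\pref\rho$ for some $\rho$ — impossible. So Theorem~\ref{t:sepcore} applies and yields a linear functional $\ell$ on $\M_0(\Sc)$ with $\ell(\zero)<\ell(y)$, i.e. $\ell(y)>0$, for all $y\in\core(C)$; in particular $\ell(\mu-\nu)>0$ whenever $\mu\pref\nu$. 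Extend $\ell$ arbitrarily to a linear functional $L$ on $\M(\Sc)$ (e.g. by picking any linear complement of $\M_0(\Sc)$, or simply set $L(\mu)=\ell(\mu-\mu_0(\Sc)\,\mu_0)$ composed appropriately) and define $U(\mu)=L(\mu)$ for $\mu\in\Pc(\Sc)$. Then $\mu\pref\nu$ implies $U(\mu)>U(\nu)$. For the converse, suppose $U(\mu)>U(\nu)$ but not $\mu\pref\nu$; completeness gives $\nu\prefeq\mu$ and $\mu\prefeq\nu$, i.e. $\mu\sim\nu$, or $\nu\pref\mu$ — but $\nu\pref\mu$ would give $U(\nu)>U(\mu)$, and $\mu\sim\nu$ needs the functional to be constant on indifference sets. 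The latter follows from Lemma~\ref{l:indiff} together with the strict inequalities: if $\mu\sim\nu$ but $U(\mu)\neq U(\nu)$, pick $\rho$ with $\rho\pref\mu$ (or $\mu\pref\rho$; the trivial case where $\prefeq$ has a single indifference class makes any constant $U$ work) and use convex combinations to produce a strict preference violating an $\ell$-inequality.

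The main obstacle I expect is the last step — showing $U$ is constant on indifference classes and hence a genuine numerical representation (the "$\Longleftarrow$" direction), rather than merely order-preserving. The clean way is: $\mu\sim\nu$ together with $\mu\pref\rho$ for some $\rho$ forces, via the independence axiom, that $\tfrac12\mu+\tfrac12\rho$ and $\tfrac12\nu+\tfrac12\rho$ are both strictly between, and comparing $\ell$-values on the segment $[\mu,\nu]$ (where by Lemma~\ref{l:indiff} every point is indifferent to $\mu$) against a fixed strictly-preferred or strictly-dispreferred prospect pins $\ell$ to be affine and constant there; the degenerate case where no such $\rho$ exists means all prospects are indifferent and $U\equiv\text{const}$ works trivially. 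Handling the Archimedean axiom's role carefully in the algebraic-interior argument — making sure the $\bar\alpha$ from Lemma~\ref{l:arch2} genuinely gives a two-sided neighborhood of $\mu-\nu$ inside $C$ — is the other place demanding care.
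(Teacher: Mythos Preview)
Your approach is the paper's approach: form the cone $C\subset\M_0(\Sc)$ of positive multiples of strict-preference differences, show it is convex and equals its own algebraic interior via Lemma~\ref{l:arch2}, separate $0$ from $C$, and extend the functional to $\M(\Sc)$ by $U(\mu)=U_0\bigl(\mu-\mu(\Sc)\lambda\bigr)$. Two small points where the paper is cleaner. First, your argument for $0\notin\core(C)$ is roundabout; the paper just observes $0\notin C$ (if $0=t(\mu-\nu)$ with $t>0$ then $\mu=\nu$, contradicting $\mu\pref\nu$), and since $C=\core(C)$ this already places $0$ outside the core. Second, your core computation shows $(\mu-\nu)+\alpha\bigl[(\lambda'-\lambda'')-(\mu-\nu)\bigr]\in C$, which is a perturbation in the direction $(\lambda'-\lambda'')-(\mu-\nu)$ rather than in an arbitrary direction; the paper fixes this by writing $\vartheta_0+t\lambda=[(1-t)\mu+t\lambda^+]-[(1-t)\nu+t\lambda^-]+t\vartheta_0$ and using that $C$, being a convex cone, absorbs the extra $t\vartheta_0$ summand. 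Your version can be repaired by rescaling (since $C$ is a cone), but the paper's bookkeeping is tidier.

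Your worry about the reverse implication is legitimate: the paper simply asserts ``It follows that $U$ \dots\ is the postulated affine numerical representation,'' but one does need $\mu\sim\nu\Rightarrow U(\mu)=U(\nu)$, and this is not automatic from $\mu\pref\nu\Rightarrow U(\mu)>U(\nu)$ alone. Your sketch for this step is too vague to stand as written. The standard route is to first prove \emph{indifference independence}---$\mu\sim\nu$ implies $\alpha\mu+(1-\alpha)\lambda\sim\alpha\nu+(1-\alpha)\lambda$---which follows from strict independence plus the Archimedean axiom (it is \emph{not} a consequence of strict independence alone). Once that is available, for any $\mu\sim\nu$ and any $\rho\pref\sigma$ one gets $\epsilon\rho+(1-\epsilon)\mu\pref\epsilon\sigma+(1-\epsilon)\nu$ and $\epsilon\rho+(1-\epsilon)\nu\pref\epsilon\sigma+(1-\epsilon)\mu$ for every $\epsilon\in(0,1)$, hence $\epsilon(\rho-\sigma)\pm(1-\epsilon)(\mu-\nu)\in C_0$, and applying $U_0$ and letting $\epsilon\downarrow 0$ forces $U_0(\mu-\nu)=0$.
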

\begin{proof}
In the space $\M_0(\Sc)$, define the set
\[
C_0 = \{\mu -\nu: \mu\in \Pc(\Sc),\, \nu\in \Pc(\Sc),\, \mu \pref \nu\}.
\]
Consider two arbitrary points $\vartheta$ and $\varkappa$ in $C_0$, that is,
\begin{align*}
\vartheta = \mu-\nu,\qquad \mu,\nu\in \Pc(\Sc),\qquad \mu \pref \nu, \\
\varkappa = \lambda-\sigma,\qquad \lambda,\sigma\in \Pc(\Sc),\qquad \lambda \pref \sigma.\notag
\end{align*}
 For every  $\alpha \in (0,1)$, using the independence axiom twice, we obtain
\[
\alpha \mu + (1-\alpha) \lambda \pref \alpha \nu + (1-\alpha) \lambda \pref \alpha \nu + (1-\alpha) \sigma.
\]
Therefore, $\alpha\vartheta + (1-\alpha) \varkappa \in C_0$, which proves that $C_0$ is convex.

Define $C = \{\alpha \vartheta: \vartheta\in C_0,\, \alpha>0 \}$. It is evident that $C$ is convex cone, that is, for all $\vartheta,\varkappa\in C$, and
all $\alpha,\beta>0$ we have $\alpha \vartheta + \beta \varkappa \in C$. Moreover, $C\subset \M_0$.

We shall prove that the algebraic interior of $C$ is nonempty, and that, in fact, $C=\core(C)$. Consider any $\vartheta\in C$,
an arbitrary nonzero measure $\lambda\in \M_0$, and the ray
\[
z(\tau) = \vartheta + \tau \lambda,\quad \tau>0.
\]
Our objective is to show that $z(\tau)\in C$ for a sufficiently small $\tau>0$. Let $\lambda=\lambda^+ - \lambda^-$
be the Jordan decomposition of $\lambda$. With no loss of generality, we may assume that the direction $\lambda$ is normalized
so that $|\lambda| = \lambda^+(\Sc)+ \lambda^-(\Sc) =2$. As $\lambda\in \M_0$, we have then $\lambda^+(\Sc)=\lambda^-(\Sc) = 1$.
Let $\alpha>0$ be such that the point $\vartheta_0=\alpha \vartheta \in C_0$. Since $C$ is a cone,
$z(\tau)\in C$ if and only if $\alpha z(\tau)\in C$. Setting $t=\alpha \tau$, we reformulate our question as follows:
Does $\vartheta_0 + t \lambda$ belong to $C$ for sufficiently small $t>0$?
Since $\vartheta_0\in C_0$, we can represent it as a difference $\vartheta_0=\mu-\nu$, with $\mu,\nu\in \Pc(\Sc)$, and $\mu \pref \nu$. Then
\begin{equation}
\label{core1}
\vartheta_0 + t \lambda = \big[ (1-t)\mu + t \lambda^+\big] - \big[ (1-t)\nu + t \lambda^-\big] + t \vartheta_0.
\end{equation}
Both expressions in brackets are probability measures for $t\in [0,1]$. By virtue of the independence axiom,
\[
\mu \pref \frac{1}{2}\mu + \frac{1}{2}\nu \pref \nu.
\]
By Lemma \ref{l:arch2}, there exists $t_0>0$, such that for all $t\in [0,t_0]$ we also have
\[
(1-t)\mu + t \lambda^+ \pref \frac{1}{2}\mu + \frac{1}{2}\nu \pref (1-t)\nu + t \lambda^-.
\]
This proves that
\[
\big[ (1-t)\mu + t \lambda^+\big] - \big[ (1-t)\nu + t \lambda^-\big] \in C_0,
\]
provided that $t\in [0,t_0]$. For these values of $t$, the right hand side of \eqref{core1}
is a sum of two elements of~$C$.
As the set $C$ is a convex cone, this sum is an element of $C$ as well.
Consequently, $\vartheta + \tau \lambda\in C$ for all $\tau\in [0,t_0/\alpha]$.

Summing up, $C$ is convex, $C=\core(C)$, and $0\notin C$. By Theorem \ref{t:sepcore}, the point 0 and the set $C$
can be separated strictly:
there exists a linear functional $U_0$ on $\M_0(\Sc)$,
such that
\begin{equation}
\label{U0}
U_0(\vartheta) > 0,\quad \forall\, \vartheta \in C.
\end{equation}
We can extend the linear functional $U_0$ to the whole space $\M(\Sc)$ by choosing a measure $\lambda \in \Pc(\Sc)$ and setting
\[
U(\mu) = U_0\big(\mu - \mu(\Sc)\lambda\big),\quad \mu \in \M(\Sc).
\]
It is linear and coincides with $U_0$ on $\M_0(\Sc)$. Relation (\ref{U0}) is equivalent to the following statement: for all
$ \mu,\nu \in \Pc(\Sc)$ such that $\mu \pref \nu$, we have
\[
U_0(\mu-\nu) = U(\mu-\nu) = U(\mu) - U(\nu) > 0.
\]
It follows that $U$ restricted to $\Pc(\Sc)$ is the postulated affine numerical representation of the pre\-order~$\prefeq$.
\end{proof}

\subsection{Integral Representation. Utility Functions}
\label{s:integral-utility}

To prove the main result of this section, we assume that the space $\M(\Sc)$ is equipped with the topology of weak convergence of
measures. Recall that a sequence of measures
$\{\mu_n\}$ converges weakly to $\mu$ in $\M(\Sc)$, which we write $\mu_n\wto \mu$,  if
\[
\lim_{n\to\infty} \int_{\Sc} f(z)\, \mu_n(dz) = \int_{\Sc} f(z)\, \mu(dz), \quad\forall\, f\in\C_{\textup{b}}(\Sc),
\]
where $\C_{\textup{b}}(\Sc)$ is the set of bounded continuous real functions on $\Sc$ (for more details see,
e.g.,~\cite{Billingsley:1999}).

We derive our next result from the classical Banach's theorem on weakly$^\star$ continuous functionals.
It has been proved in the past via discrete approximations of the measures in question (see, \emph{e.g.},
\cite[{\S}10]{Fishburn70}, \cite[Ch. 3, Thm. 1--4]{Fishburn82} and \cite[Thm. 2.28]{FollmerSchied}).
\begin{theorem}
\label{t:vNM2}
Suppose the total preorder $\prefeq$ on $\Pc(\Sc)$ is continuous and satisfies the independence axiom. Then a continuous and
bounded function $u:\Sc\to \R$ exists, such that the functional
\begin{equation}
\label{vNMrep}
U(\mu) = \int_{\Sc}u(z)\;\mu(dz)
\end{equation}
is a numerical representation of $\,\prefeq$ on $\Pc(\Sc)$.
\end{theorem}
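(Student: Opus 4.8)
The plan is to derive Theorem~\ref{t:vNM2} from the affine representation of Theorem~\ref{t:vNM1}, using the continuity hypothesis twice: once to recover the Archimedean axiom that Theorem~\ref{t:vNM1} requires, and once to pass from an a~priori purely algebraic linear functional to one given by integration against a bounded continuous function. The functional-analytic input at the end is the classical fact that a linear functional on $\M(\Sc)$ is weakly continuous precisely when it has the form $\mu\mapsto\int_\Sc f\,d\mu$ with $f\in\C_{\textup{b}}(\Sc)$.

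First I would verify that a continuous total preorder on $\Pc(\Sc)$ automatically satisfies the Archimedean axiom. Given $\mu\pref\nu\pref\lambda$, the affine map $\alpha\mapsto\alpha\mu+(1-\alpha)\lambda$ is weakly continuous from $[0,1]$ to $\Pc(\Sc)$. If $\alpha\mu+(1-\alpha)\lambda\notpref\nu$ for every $\alpha\in(0,1)$, then by completeness $\nu\prefeq\alpha\mu+(1-\alpha)\lambda$ for all such $\alpha$; letting $\alpha\uparrow1$ and using that $\{\sigma:\nu\prefeq\sigma\}$ is closed gives $\nu\prefeq\mu$, contradicting $\mu\pref\nu$. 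The second relation, $\nu\pref\beta\mu+(1-\beta)\lambda$ for some $\beta\in(0,1)$, follows symmetrically by letting $\beta\downarrow0$ and using that $\{\sigma:\sigma\prefeq\nu\}$ is closed. Thus Theorem~\ref{t:vNM1} applies and yields a linear functional $U$ on $\M(\Sc)$ whose restriction to $\Pc(\Sc)$ represents $\prefeq$.

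Next I would show that $U|_{\Pc(\Sc)}$ is weakly continuous. For any $\nu_0\in\Pc(\Sc)$, the set $\{\mu\in\Pc(\Sc):U(\mu)\le U(\nu_0)\}$ equals $\{\mu:\nu_0\prefeq\mu\}$ and $\{\mu:U(\mu)\ge U(\nu_0)\}$ equals $\{\mu:\mu\prefeq\nu_0\}$, both closed by continuity of $\prefeq$; as $\nu_0$ varies this exhausts all sub- and superlevel sets of $U|_{\Pc(\Sc)}$ at levels in its range (an interval), the others being empty or all of $\Pc(\Sc)$, so $U|_{\Pc(\Sc)}$ is continuous. Setting $u(s)=U(\delta_s)$ and noting that $s\mapsto\delta_s$ is weakly continuous, $u$ is continuous on $\Sc$.

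The crux, and the step where I expect the main difficulty, is that $u$ is bounded — this is exactly what is not delivered by the mere existence of an affine representation. I would argue by contradiction: if $u$ were unbounded above, choose $s_n$ with $u(s_n)\ge 2^n$, put $\mu^\star=\sum_{n\ge1}2^{-n}\delta_{s_n}\in\Pc(\Sc)$ and $\mu_k=(2^{-1}+2^{-k})\delta_{s_1}+\sum_{n=2}^k2^{-n}\delta_{s_n}\in\Pc(\Sc)$; then $\mu_k\wto\mu^\star$, whereas $U(\mu_k)\ge\sum_{n=2}^k2^{-n}u(s_n)\ge k-1\to\infty$, contradicting weak continuity of $U|_{\Pc(\Sc)}$ at $\mu^\star$ and the finiteness of $U(\mu^\star)$; unboundedness below is excluded the same way. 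Hence $u\in\C_{\textup{b}}(\Sc)$, so $\Psi(\mu):=\int_\Sc u\,d\mu$ defines a weakly continuous linear functional on $\M(\Sc)$ that, by linearity of $U$, agrees with $U$ on every finitely supported probability measure. Since such measures are weakly dense in $\Pc(\Sc)$ and both $U|_{\Pc(\Sc)}$ and $\Psi|_{\Pc(\Sc)}$ are weakly continuous, $U=\Psi$ on all of $\Pc(\Sc)$, which is \eqref{vNMrep}. The subtlety to keep in mind throughout is that the functional produced by Theorem~\ref{t:vNM1} need not be weakly continuous on the whole of $\M(\Sc)$ — only on $\Pc(\Sc)$ — so the identification with $\Psi$ has to be made on $\Pc(\Sc)$ by density rather than on the ambient vector space.
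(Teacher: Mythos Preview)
Your argument is correct and arrives at the same conclusion by a genuinely different route. The paper, after establishing continuity of $U$ on $\Pc(\Sc)$ just as you do, goes on to prove that $U$ is weakly continuous on \emph{all} of $\M(\Sc)$: given $\mu_n\wto\mu$, it uses the Jordan decomposition $\mu_n=\mu_n^+-\mu_n^-$, invokes Prohorov's theorem to get subsequential weak limits of the positive and negative parts, and reduces to continuity on $\Pc(\Sc)$ after normalization. It then applies the Appendix representation theorem (Theorem~\ref{t:Mdual}) for weakly continuous linear functionals on $\M(\Sc)$ to obtain $u\in\C_{\textup{b}}(\Sc)$ directly. You instead stay inside $\Pc(\Sc)$: you extract $u(s)=U(\delta_s)$, prove boundedness by an explicit blow-up construction, and then identify $U$ with $\mu\mapsto\int u\,d\mu$ via weak density of finitely supported probabilities. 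Your approach is more elementary---it avoids both Prohorov's theorem and the duality result in the Appendix---while the paper's approach yields the stronger intermediate statement that $U$ is continuous on the whole signed-measure space, which is what makes Theorem~\ref{t:Mdual} applicable and is in keeping with the paper's theme of invoking integral representation theorems for continuous linear functionals. One small correction: your displayed inequality $U(\mu_k)\ge\sum_{n=2}^k2^{-n}u(s_n)$ is not literally valid if $u(s_1)<0$, but since $(2^{-1}+2^{-k})u(s_1)$ is bounded the conclusion $U(\mu_k)\to\infty$ stands.
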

\begin{proof}
The continuity of the preorder $\prefeq$ implies the Archimedean axiom. Indeed, the sets $\{ \pi\in \Pc(\Sc) : \pi \pref \nu\}$
and $\{ \pi\in \Pc(\Sc) : \mu \pref \pi\}$ are
open, and the mapping $\alpha\mapsto \alpha\pi + (1-\alpha) \lambda$, $\alpha\in [0,1]$, is continuous for any $\lambda \in \Pc(\Sc)$.

Owing to Theorem \ref{t:vNM1},
a linear functional $U:\M(\Sc)\to \R$ exists, whose restriction to $\Pc(\Sc)$ is a numerical representation of~$\prefeq$.
We shall prove that the functional $U(\cdot)$ is continuous on $\Pc(\Sc)$, that is, for every $\alpha$ the sets
\[
A = \{\mu\in \Pc: U(\mu) \le \alpha\}\qquad \text{and} \qquad  B = \{\mu\in \Pc: U(\mu) \ge \alpha\}
\]
are closed. Since $\Pc$ is convex and $U(\cdot)$ is linear, the set
$U(\Pc)$ is convex. Therefore, for every  $\alpha$ one of three cases may occur:
\begin{tightlist}{iii}
\item $U(\mu) < \alpha$ for all $\mu \in \Pc$;
\item $U(\mu) > \alpha$ for all $\mu \in \Pc$;
\item $\alpha \in U(\Pc)$.
\end{tightlist}
In cases (i) and (ii) there is nothing to prove. In case (iii), let $\nu\in \Pc$ be such that $U(\nu)=\alpha$. Since
$U(\cdot)$ is a numerical representation of the preorder, we have
\[
A = \{\mu\in \Pc: \nu \prefeq \mu \}\qquad \text{and} \qquad  B = \{\mu\in \Pc: \mu \prefeq \nu\}.
\]
Both sets are closed due to the continuity of the preorder $\prefeq$.

Now, we can prove continuity on the whole space $\M(\Sc)$.
Suppose  $\mu_n\wto\mu$, but $U(\mu_n)$ does not converge to $U(\mu)$. Then an infinite set $\K$ and $\varepsilon>0$ exist such that
$|U(\mu_k)-U(\mu)| > \varepsilon$ for all $k\in \K$.
As $U(\cdot)$ is linear, with no loss of generality we may assume that $\mu\in \Pc$.
Consider the Jordan decomposition $\mu_k=\mu^+_k-\mu^-_k$. By the Prohorov theorem \cite{Prohorov:1956}, 
the sequence $\{\mu_k\}$ is uniformly tight,
and so are $\{\mu^+_k\}$ and $\{\mu^-_k\}$. They are, therefore, weakly compact. Let $\nu$ be the weak limit of a convergent subsequence
$\{\mu^+_k\}_{k\in \K_1}$, where $\K_1\subseteq \K$.
 Then the subsequence $\{\mu^-_k\}_{k\in \K_1}$ also has a weak limit: $\lambda = \nu-\mu$. The measures
$\mu^+_k/\mu^+_k(\Sc)$ are probability measures, and $\mu^+_k(\Sc)\to \nu(\Sc) \ge 1$. Consequently,
\[
U\big(\mu^+_k\big) = \mu^+_k(\Sc) U\bigg(\frac{\mu^+_k}{\mu^+_k(\Sc)}\bigg) \xrightarrow{k\in \K_1} \nu(\Sc) U\bigg(\frac{\nu}{\nu(\Sc)}\bigg) = U(\nu).
\]
Similarly, $\mu^-_k(\Sc)\xrightarrow{k\in \K_1} \nu(\Sc)$ and
\[
U\big(\mu^-_k\big) = \mu^-_k(\Sc) U\bigg(\frac{\mu^-_k}{\mu^-_k(\Sc)}\bigg),\quad \text{if} \quad \mu^-_k(\Sc)> 0.
\]
If $\mu^-_k(\Sc)> 0$ infinitely often, then
the limit of $U\big(\mu^-_k\big)$ on this sub-subsequence equals $U(\lambda)$. If $\mu^-_k=0$ infinitely often, then $\lambda=0$.
In any case, $U(\mu^-_k)\to U(\lambda)$, when $k\in \K_1$. It follows that
\[
U(\mu_k) = U(\mu^+_k) - U(\mu^-_k) \xrightarrow{k\in \K_1} U(\nu)- U(\lambda) = U(\mu),
\]
 which contradicts
our assumption. Therefore, the functional $U(\cdot)$ is continuous on $\M(\Sc)$.
Owing to Theorem \ref{t:Mdual} in the Appendix, $U(\cdot)$ has the form \eqref{vNMrep}, where $u:\Sc\to \R$ is continuous and bounded.
\end{proof}

Formula \eqref{vNMrep} is referred to as the \emph{expected utility representation}, and $u(\cdot)$ is called the \emph{utility function}. 

The utility function in Theorem \ref{t:vNM2} is bounded. If we restrict the space of measures to measures satisfying additional integrability
conditions, we obtain representations in which unbounded utility functions may occur. Our construction is similar to the construction leading to \cite[Thm. 2.30]{FollmerSchied} with the difference that we work with the space of signed measures on $\Sc$, rather than with the set of probability measures.

Let $\psi:\Sc\to [1,\infty)$ be a continuous \emph{gauge function}, and let $\C_{\textup{b}}^\psi(\Sc)$ be the set of functions
$f:\Sc\to \R$, such that $f/\psi\in \C_{\textup{b}}(\Sc)$. We can define the space $\M^{\psi}(\Sc)$ of regular signed measures $\mu$, such that
\[
\Big| \int_{\Sc} f(z)\;\mu(dz) \Big| < \infty,\quad \forall\, f\in \C_{\textup{b}}^\psi(\Sc).
\]
Similarly to the topology of weak convergence, we say that
a sequence of measures $\mu_n\in \M(\Sc)$ is convergent $\psi$-weakly to $\mu\in \M(\Sc)$ if
\[
\lim_{n\to\infty} \int_{\Sc} f(z)\, \mu_n(dz) = \int_{\Sc} f(z)\, \mu(dz), \quad \forall\, f\in \C_{\textup{b}}^\psi(\Sc).
\]
All continuity statements will be now made with respect to this topology.
We use the symbol $\Pc^\psi(\Sc)$ to denote the set of probability measures
in $\M^{\psi}(\Sc)$.

We can now recover the result of \cite[Th. 2.30]{FollmerSchied}.
\begin{theorem}
\label{t:vNM2psi}
Suppose the total preorder $\prefeq$ on $\Pc^\psi(\Sc)$ is continuous and satisfies the independence axiom. Then a function
$u \in \C_{\textup{b}}^\psi(\Sc)$ exists such that the functional
\begin{equation}
\label{vNMreppsi}
U(\mu) = \int_{\Sc}u(z)\;\mu(dz)
\end{equation}
is a numerical representation of $\,\prefeq$ on $\Pc^\psi(\Sc)$.
\end{theorem}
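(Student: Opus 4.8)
The plan is to reproduce, in the weighted setting, the three-step scheme behind Theorems~\ref{t:vNM1} and~\ref{t:vNM2}: replace $\Pc(\Sc)$, $\M(\Sc)$ and the topology of weak convergence by $\Pc^\psi(\Sc)$, $\M^\psi(\Sc)$ and the topology of $\psi$-weak convergence throughout, and finish with a $\psi$-weighted version of the integral representation of Theorem~\ref{t:Mdual}. The opening remark of the proof of Theorem~\ref{t:vNM2} transfers unchanged: for fixed $\pi,\lambda\in\Pc^\psi(\Sc)$ the map $\alpha\mapsto\alpha\pi+(1-\alpha)\lambda$ is $\psi$-weakly continuous on $[0,1]$ (each functional $\mu\mapsto\int_\Sc f\,d\mu$, $f\in\C_{\textup{b}}^\psi(\Sc)$, is affine along it), and the strict preference sets are $\psi$-weakly open, so continuity of $\prefeq$ forces the Archimedean axiom.

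First I would establish the $\psi$-analogue of Theorem~\ref{t:vNM1} by re-running its proof inside the linear subspace $\M^\psi_0(\Sc)=\M^\psi(\Sc)\cap\M_0(\Sc)$. Set $C_0=\{\mu-\nu:\mu,\nu\in\Pc^\psi(\Sc),\ \mu\pref\nu\}$, let $C$ be the convex cone it generates, and verify $C=\core(C)$ and $0\notin C$ exactly as before. The only new point is that for a direction $\lambda\in\M^\psi_0(\Sc)$, normalized so that $|\lambda|=2$, the Jordan parts $\lambda^+$ and $\lambda^-$ again belong to $\M^\psi(\Sc)$; this is immediate, since $\lambda\in\M^\psi(\Sc)$ forces $\int_\Sc\psi\,d|\lambda|<\infty$. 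Hence $(1-t)\mu+t\lambda^{\pm}\in\Pc^\psi(\Sc)$ for $t\in[0,1]$, Lemma~\ref{l:arch2} applies, and the relevant difference lands in $C_0$ for small $t$. Theorem~\ref{t:sepcore} then delivers a linear functional $U$ on $\M^\psi(\Sc)$ whose restriction to $\Pc^\psi(\Sc)$ represents $\prefeq$, and the three-case argument of Theorem~\ref{t:vNM2} (using only that $U$ is a representation, that $\Pc^\psi(\Sc)$ is convex, and that $\prefeq$ is continuous) shows that $U$ is $\psi$-weakly continuous on $\Pc^\psi(\Sc)$.

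The main obstacle is to upgrade this to $\psi$-weak continuity of $U$ on all of $\M^\psi(\Sc)$, i.e.\ to replace the Prohorov step of the proof of Theorem~\ref{t:vNM2}. Given $\mu_n\to\mu$ $\psi$-weakly with $U(\mu_n)\not\to U(\mu)$, I would split $\mu_n=\mu_n^+-\mu_n^-$, extract $\psi$-weakly convergent subsequences of $\{\mu_n^+\}$ and $\{\mu_n^-\}$, and finish with the same mass-normalization bookkeeping, $U(\mu_n^{\pm})=\mu_n^{\pm}(\Sc)\,U\!\big(\mu_n^{\pm}/\mu_n^{\pm}(\Sc)\big)$ converging to the value of $U$ at the corresponding limit. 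The delicate part is relative $\psi$-weak compactness of the Jordan parts. Boundedness of $|\mu_n|(\Sc)$ and of $\int_\Sc\psi\,d|\mu_n|$ is automatic, by applying the uniform boundedness principle to the family $\{\mu_n\}$ acting on the Banach spaces $\C_{\textup{b}}(\Sc)$ and $\big(\C_{\textup{b}}^\psi(\Sc),\ \|f\|_\psi=\sup_z|f(z)|/\psi(z)\big)$; what remains is uniform tightness together with $\psi$-uniform integrability of the tails, $\sup_n\int_{\{\psi>R\}}\psi\,d|\mu_n|\to 0$ as $R\to\infty$. I expect to obtain these from a $\psi$-weighted Prohorov-type statement — for positive measures, $\psi$-weak convergence with convergent $\psi$-masses yields $\psi$-uniform integrability by truncating $\psi$ with the bounded continuous functions $\psi\wedge R$ — or, alternatively, from the reduction device underlying \cite[Thm.~2.30]{FollmerSchied}, which transports the problem to the unweighted case through the gauge.

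With $U$ now a $\psi$-weakly continuous linear functional on $\M^\psi(\Sc)$, the proof closes by invoking the $\psi$-weighted analogue of Theorem~\ref{t:Mdual} (the Banach-type identification of the dual of $\M^\psi(\Sc)$, under $\psi$-weak convergence, with $\C_{\textup{b}}^\psi(\Sc)$): there is $u\in\C_{\textup{b}}^\psi(\Sc)$ with $U(\mu)=\int_\Sc u(z)\,\mu(dz)$, which is exactly \eqref{vNMreppsi}.
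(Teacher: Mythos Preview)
Your proposal is essentially the paper's approach: rerun the proof of Theorem~\ref{t:vNM2} in the $\psi$-weighted setting and finish by invoking the weighted integral representation, Theorem~\ref{t:Mdualpsi}, in place of Theorem~\ref{t:Mdual}. The paper's own proof is a single sentence to exactly this effect.

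One comment on the Prohorov step, where you hedge. Your first suggested route does not quite close: the $\psi$-uniform-integrability principle you quote (``for positive measures, $\psi$-weak convergence with convergent $\psi$-masses yields $\psi$-uniform integrability'') presupposes that the positive sequences $\{\mu_n^{\pm}\}$ already converge $\psi$-weakly, which is precisely what is at issue. Your second route, the reduction through the gauge, is the right one and is exactly what underlies Theorem~\ref{t:Mdualpsi}: the linear map $L:\M^\psi(\Sc)\to\M(\Sc)$ with $dL\mu=\psi\,d\mu$ is a homeomorphism carrying $\psi$-weak to weak convergence, and since $\psi>0$ it commutes with Jordan decomposition, $L(\mu_n^{\pm})=(L\mu_n)^{\pm}$. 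Ordinary Prohorov applied to $\{L\mu_n\}$ then gives tightness and relative weak compactness of $\{(L\mu_n)^{\pm}\}$, hence relative $\psi$-weak compactness of $\{\mu_n^{\pm}\}$, and the rest of your bookkeeping goes through.
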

\begin{proof}
The proof is identical to the proof of Theorem \ref{t:vNM2}, except that we need to invoke Theorem \ref{t:Mdualpsi}
from the Appendix.
\end{proof}

\subsection{Monotonicity and Risk Aversion}
\label{s:ra-mu}

Suppose $\Sc$ is a separable Banach lattice with a partial order relation $\ge$.
In a lattice structure, it makes sense to speak about monotonicity of a preference relation. In this section,
the symbol $\delta_z$ denotes a unit atomic measure concentrated on $z\in \Sc$.

\begin{definition}
\label{d:monotonic}
A preorder $\prefeq$ on $\Pc(\Sc)$ is \emph{monotonic} with respect to the partial ord\-er~$\ge$
on $\Sc$, if for all $z,v\in \Sc$ the  implication
$z \ge v \;\Longrightarrow\; \delta_z  \prefeq \delta_v$ is true.
\end{definition}

We can derive monotonicity of utility functions from the monotonicity of the order.
\begin{theorem}
\label{t:vNM-mono}
Suppose the total preorder $\prefeq$ on $\Pc(\Sc)$ is monotonic, continuous, and satisfies the independence axiom. Then a nondecreasing, continuous, and
bounded function $u:\Sc\to \R$ exists, such that the functional \eqref{vNMrep}
is a numerical representation of $\,\prefeq$ on $\Pc(\Sc)$.
\end{theorem}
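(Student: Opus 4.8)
The plan is to obtain Theorem~\ref{t:vNM-mono} as an essentially immediate consequence of Theorem~\ref{t:vNM2}. A separable Banach lattice, with its norm topology and Borel $\sigma$-algebra, is in particular a Polish space, so Theorem~\ref{t:vNM2} applies verbatim and already produces a continuous bounded function $u:\Sc\to\R$ for which the functional \eqref{vNMrep} is a numerical representation of $\prefeq$ on $\Pc(\Sc)$. The only thing left to argue is that the monotonicity hypothesis forces this $u$ to be nondecreasing; no new separation or integral-representation machinery is needed.

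Concretely, I would first evaluate \eqref{vNMrep} at the atomic measures to get $U(\delta_z)=u(z)$ for every $z\in\Sc$. Then fix $z,v\in\Sc$ with $z\ge v$; monotonicity of $\prefeq$ gives $\delta_z\prefeq\delta_v$. Suppose, towards a contradiction, that $u(z)<u(v)$, i.e.\ $U(\delta_v)>U(\delta_z)$. Since $U$ restricted to $\Pc(\Sc)$ is a numerical representation of $\prefeq$, this yields $\delta_v\pref\delta_z$, which by definition means $\delta_v\prefeq\delta_z$ and $\delta_z\notprefeq\delta_v$; the second of these contradicts $\delta_z\prefeq\delta_v$. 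Hence $u(z)\ge u(v)$, so $u$ is nondecreasing with respect to the partial order $\ge$ on $\Sc$. Continuity, boundedness, and the fact that \eqref{vNMrep} represents $\prefeq$ are inherited directly from Theorem~\ref{t:vNM2}, which completes the proof.

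I do not anticipate any real obstacle, since the statement is a corollary rather than a new result; the proof merely checks that monotonicity of the preorder transfers to $u$ along the embedding $z\mapsto\delta_z$. The one point worth stating with care is the passage from the strict equivalence $\mu\pref\nu\iff U(\mu)>U(\nu)$ to the weak equivalence $\mu\prefeq\nu\iff U(\mu)\ge U(\nu)$, which relies on completeness of the total preorder; this is exactly what licenses reading $u(z)\ge u(v)$ off of $\delta_z\prefeq\delta_v$.
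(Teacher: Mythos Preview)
Your proposal is correct and follows exactly the paper's approach: invoke Theorem~\ref{t:vNM2} to obtain a continuous bounded $u$ with \eqref{vNMrep} representing $\prefeq$, then read off $u(z)=U(\delta_z)\ge U(\delta_v)=u(v)$ from the monotonicity assumption applied to $z\ge v$. Your contradiction argument and your remark about passing from the strict to the weak equivalence are slight elaborations, but the substance is identical to the paper's one-line verification.
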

\begin{proof}
In view of Theorem \ref{t:vNM2}, it is sufficient to verify that the function $u(\cdot)$ in \eqref{vNMrep}
is nodecreasing with respect to the partial order $\ge$.
To this end, we consider $z,v\in\Sc$ such that $z\ge v$. By monotonicity of the order,
$u(z) = U(\delta_z) \ge U(\delta_v) = u(v)$.
\end{proof}

We now focus on the case, when the gauge function is $\psi_p(z)=1+\|z\|^p$, where $p\ge 1$.
Then for every $\mu\in \Pc^{\psi_p}(\Sc)$ and for every $\sigma$-subalgebra $\Gc$ of $\B$ the conditional expectation
$\Ec_{\mu|\Gc}:\Sc\to\Sc$ is well-defined, as a $\Gc$-measurable function satisfying the equation
\[
\int_G \Ec_{\mu|\Gc}(z)\;\mu(dz) = \int_G z\;\mu(dz),\quad G\in\Gc
\]
 (\emph{cf.} \cite[{\S}2.1]{LedouxTalagrand}).
The conditional expectation $\Ec_{\mu|\Gc}$ induces a probability measure on $(\Sc,\B)$ as follows
\[
\mu_{\Gc}(A) = \mu\big\{\Ec_{\mu|\Gc}^{-1}(A)\big\},\quad A \in \B.
\]
\begin{definition}
\label{d:risk-averse-mu}
A preference relation $\prefeq$ on $\Pc^{\psi_p}(\Sc)$ is \emph{risk-averse}, if
$ \mu_{\Gc}\prefeq \mu$,
for every $\mu\in \Pc^{\psi_p}(\Sc)$ and every $\sigma$-subalgebra $\Gc$ of $\B$.
\end{definition}

By choosing $\Gc=\{\Sc,\emptyset\}$, we observe that Definition \ref{d:risk-averse-mu} implies that
$\delta_{\Ec_{\mu}} \prefeq \mu$, where $\Ec_\mu=\int_{\Sc}z\;\mu(dz)$ is the expected value.
\begin{theorem}
\label{t:vNM2psi-ra}
Suppose a total preorder $\prefeq$ on $\Pc^{\psi_p}(\Sc)$ is continuous, risk-averse, and satisfies the independence axiom. Then a concave function
$u \in \C_{\textup{b}}^{\psi_p}(\Sc)$ exists such that the functional \eqref{vNMreppsi}
is a numerical representation of $\,\prefeq$ on $\Pc^\psi(\Sc)$.
\end{theorem}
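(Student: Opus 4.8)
The plan is to invoke Theorem~\ref{t:vNM2psi} to obtain a representation \eqref{vNMreppsi} with some $u\in\C_{\textup{b}}^{\psi_p}(\Sc)$, and then to show that the risk-aversion hypothesis forces $u$ to be concave. Concavity of a continuous function is a local, two-point condition: it suffices to show that for every pair $z,v\in\Sc$ and every $\alpha\in(0,1)$ one has $u(\alpha z+(1-\alpha)v)\ge \alpha u(z)+(1-\alpha)u(v)$. The right-hand side is exactly $U(\mu)$ for the two-point measure $\mu=\alpha\delta_z+(1-\alpha)\delta_v$, because $U$ is linear; the left-hand side is $u(\Ec_\mu)=U(\delta_{\Ec_\mu})$, since the barycenter of $\mu$ is $\Ec_\mu=\alpha z+(1-\alpha)v$. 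So the claimed inequality is precisely $U(\delta_{\Ec_\mu})\ge U(\mu)$, i.e. $\delta_{\Ec_\mu}\prefeq\mu$.

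First I would fix $z,v\in\Sc$ and $\alpha\in(0,1)$ and set $\mu=\alpha\delta_z+(1-\alpha)\delta_v\in\Pc^{\psi_p}(\Sc)$. Next I would exhibit a $\sigma$-subalgebra $\Gc\subset\B$ for which $\mu_\Gc=\delta_{\Ec_\mu}$. The natural choice is the trivial algebra $\Gc=\{\Sc,\emptyset\}$: then $\Ec_{\mu\mid\Gc}$ is the constant function equal to $\int_\Sc w\,\mu(dw)=\alpha z+(1-\alpha)v=\Ec_\mu$, so the push-forward $\mu_\Gc$ is the Dirac mass $\delta_{\Ec_\mu}$, as already observed in the remark following Definition~\ref{d:risk-averse-mu}. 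Then the risk-aversion axiom gives $\mu_\Gc\prefeq\mu$, that is $\delta_{\Ec_\mu}\prefeq\mu$, hence $U(\delta_{\Ec_\mu})\le U(\mu)$ by the fact that $U$ is a numerical representation (using that $U$ being a numerical representation of the total preorder gives $a\prefeq b\Rightarrow U(a)\le U(b)$). Unwinding $U$ through \eqref{vNMreppsi} and linearity yields
\[
u\big(\alpha z+(1-\alpha)v\big)=U(\delta_{\Ec_\mu})\le U(\mu)=\alpha\,u(z)+(1-\alpha)\,u(v)\,,
\]
wait---this is the wrong direction, so I would instead keep track of the sign convention of the preorder in this paper.

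Here care is needed: throughout the paper the convention is that $\mu\pref\nu$ means $\mu$ is \emph{strictly preferred} over $\nu$ and corresponds to $U(\mu)>U(\nu)$, so more-preferred prospects have \emph{larger} utility. The monotonicity axiom in \S\ref{s:ra-mu} reads $z\ge v\Rightarrow\delta_z\prefeq\delta_v$, which with the utility convention gives nondecreasing $u$ as in Theorem~\ref{t:vNM-mono}; and the risk-aversion axiom $\mu_\Gc\prefeq\mu$ then reads $U(\mu_\Gc)\le U(\mu)$, i.e. averaging out randomness is (weakly) preferred, so $U(\delta_{\Ec_\mu})\le U(\mu)$. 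Thus $u(\alpha z+(1-\alpha)v)\le\alpha u(z)+(1-\alpha)u(v)$ would give convexity, not concavity. To land on concavity I would recheck the intended reading: the statement of Definition~\ref{d:risk-averse-mu} together with the conclusion of Theorem~\ref{t:vNM2psi-ra} forces the opposite sign convention for risk aversion, namely that the risky prospect $\mu$ is weakly preferred, $\mu\prefeq\mu_\Gc$, equivalently $U(\mu)\le U(\mu_\Gc)=U(\delta_{\Ec_\mu})$; then indeed $\alpha u(z)+(1-\alpha)u(v)=U(\mu)\le U(\delta_{\Ec_\mu})=u(\alpha z+(1-\alpha)v)$, which is exactly midpoint-type concavity in the convex-combination form, and concavity of $u$ follows for all finite convex combinations by applying the same argument to an arbitrary two-point measure and then, since $u$ is continuous on the separable Banach lattice $\Sc$, extending from rational to all $\alpha\in[0,1]$.

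The main obstacle, then, is not analytic but bookkeeping: pinning down the exact direction of the risk-aversion axiom relative to this paper's sign convention on $\prefeq$ and $U$, and confirming that the trivial $\sigma$-algebra indeed yields $\mu_\Gc=\delta_{\Ec_\mu}$ in the Banach-lattice-valued setting (so that $\Ec_{\mu\mid\Gc}$ is genuinely the constant barycenter and the induced measure is the Dirac mass at that point). Once the two-point case is settled with the correct inequality, there is nothing more to do: $u\in\C_{\textup{b}}^{\psi_p}(\Sc)$ is already supplied by Theorem~\ref{t:vNM2psi}, concavity is the added conclusion, and continuity plus the density argument upgrade the dyadic/rational inequality to full concavity. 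I would also remark that one does not even need general $\Gc$ here---the trivial algebra suffices for concavity---so the stronger risk-aversion hypothesis in Definition~\ref{d:risk-averse-mu} is only partially used, which is worth noting after the proof.
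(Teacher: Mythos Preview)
Your overall strategy is exactly the paper's: invoke Theorem~\ref{t:vNM2psi} to get $u\in\C_{\textup{b}}^{\psi_p}(\Sc)$, then apply risk aversion with the trivial $\sigma$-algebra $\Gc=\{\Sc,\emptyset\}$ so that $\mu_\Gc=\delta_{\Ec_\mu}$, and read off Jensen's inequality. The paper does this for an arbitrary $\mu$; you do it for two-point measures, which is equally sufficient since concavity is a two-point condition. Your closing remark that only the trivial $\Gc$ is used is precisely the content of Remark~\ref{r:eisk-aversion}.

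The one genuine slip is the sign bookkeeping, and your proposed resolution (reinterpreting Definition~\ref{d:risk-averse-mu} as $\mu\prefeq\mu_\Gc$) is wrong. You yourself verify the convention correctly via monotonicity: $z\ge v\Rightarrow\delta_z\prefeq\delta_v\Rightarrow u(z)\ge u(v)$, so in this paper $a\prefeq b$ means $a$ is weakly preferred and hence $U(a)\ge U(b)$. But then in the very next sentence you write ``$\mu_\Gc\prefeq\mu$ reads $U(\mu_\Gc)\le U(\mu)$,'' which contradicts what you just established. With the correct reading, Definition~\ref{d:risk-averse-mu} as written ($\mu_\Gc\prefeq\mu$) gives $U(\delta_{\Ec_\mu})=U(\mu_\Gc)\ge U(\mu)$ directly, i.e.
\[
u\big(\alpha z+(1-\alpha)v\big)\ge \alpha\,u(z)+(1-\alpha)\,u(v),
\]
which is concavity. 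No reinterpretation of the axiom is needed; the definition is stated so that the \emph{averaged} prospect is preferred, and preferred means larger $U$. Once you fix this one inequality, your argument is complete and coincides with the paper's.
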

\begin{proof} In view of Theorem \ref{t:vNM2psi}, we only need to prove the concavity of $u(\cdot)$. Due to risk aversion,
for every $\mu\in\Pc^{\psi_p}(\Sc)$, we obtain $\delta_{\Ec_{\mu}} \prefeq \mu$. Consequently,
\[
u\bigg(\int_{\Sc}z\;\mu(dz)\bigg) \ge  \int_{\Sc}u(z)\;\mu(dz).
\]
This is Jensen's inequality, which is equivalent to the concavity of $u(\cdot)$.
\end{proof}
\begin{remark}
\label{r:eisk-aversion}
{\rm
It is clear from the proof that the concavity of $u(\cdot)$ could have been obtained by simply assuming that $\delta_{\Ec_{\mu}} \prefeq \mu$.
The concavity of $u(\cdot)$ would imply risk aversion in the sense of Definition \ref{d:risk-averse-mu}, by virtue of Jensen's inequality
for conditional expectations. Therefore, Definition \ref{d:risk-averse-mu} and the requirement that $\delta_{\Ec_{\mu}} \prefeq \mu$ are  equivalent
within the framework of the expected utility theory. Nonetheless, we prefer to leave Definition \ref{d:risk-averse-mu}
in its full form, because we shall use the concept of risk aversion in connection with other axioms, where such equivalence cannot be derived.
}
\end{remark}

\section{Dual Utility Theory}
\label{s:dut}
\subsection{The Prospect Space of Quantile Functions}
\label{s:dual-prospects}
The dual utility theory is formulated in much more restrictive setting: for the probability distributions on the real line. With every
probability distribution $\mu\in \Pc(\R)$ we associate the distribution function: $F_{\mu}(t)  \eqdef \mu\big((-\infty,t]\big)$. It is
nondecreasing and right-continuous. We can, therefore, define its inverse
\begin{equation}
\label{quantile-function}
F^{-1}_\mu(p) \eqdef \inf\,\{ t\in \R : F_{\mu}(t) \ge p\},\quad p\in (0,1).
\end{equation}
By definition, $F^{-1}_\mu(p)$ is the smallest $p$-quantile of $\mu$. We call $F^{-1}_\mu(\cdot)$ the \emph{quantile function} associated with the
probability measure $\mu$.
Every quantile function is nondecreasing and left-continuous on the open interval $(0,1)$. On the other hand, every nondecreasing and left-continuous
function $\varPhi(\cdot)$ on $(0,1)$ uniquely defines  the following distribution function:
\[
F_\mu(t) = \varPhi^{-1}(t) \eqdef \sup\, \{ p\in (0 ,1) : \varPhi(p) \le t \},
\]
which corresponds to a certain probability measure $\mu \in \Pc(\R)$.

\emph{The set $\Q$ of all nondecreasing and left-continuous functions on the interval $(0,1)$ will be our prospect space.}
It is evident that
$\Q$ is a convex cone in the vector space $\Lc_0(0,1)$ of all Lebesgue measurable functions on the interval $(0,1)$.

We assume that the preference relation $\prefeq$ on $\Q$ is a total preorder  and satisfies two additional conditions:
\begin{description}
\item[\emph{Dual Independence Axiom:}] For all $\varPhi$, $\varPsi$, and $\varUpsilon$ in  $\Q$ one has
\[
\varPhi \pref \varPsi \;\Longrightarrow\;
\alpha \varPhi + (1-\alpha) \varUpsilon \pref \alpha \varPsi + (1-\alpha) \varUpsilon,\quad \forall\, \alpha\in (0,1),
\]
\item[\emph{Dual Archimedean Axiom:}] For all $\varPhi$, $\varPsi$, and $\varUpsilon$ in  $\Q$, satisfying the relations
$\varPhi \pref \varPsi \pref \varUpsilon$,
there exist $\alpha,\beta\in (0,1)$ such that
\[
\alpha \varPhi + (1-\alpha) \varUpsilon \pref \varPsi \pref \beta \varPhi + (1-\beta) \varUpsilon.
\]
\end{description}

In \cite{Yaari:1987},
the dual utility theory considered the space of uniformly bounded random variables
on an implicitly assumed atomless probability
space. The operation of forming convex combinations was considered for comonotonic random variables only. This corresponds
to forming convex combinations of quantile functions, and in this way our system of axioms is a subset of the
axioms of the dual utility theory. We discuss this issue in \S \ref{s:dual-utility-variables}.

Similarly to Lemmas \ref{l:indiff} and  \ref{l:arch2}, we derive the following properties of a preorder satisfying the dual axioms.
\begin{lemma}
\label{l:indiff-dual}
Suppose a total preorder $\prefeq$ on $\Q$ satisfies
the dual independence axiom. Then for every $\varPhi\in \Q$ the indifference set $\{\varPsi\in \Q: \varPsi \sim \varPhi\}$ is convex.
\end{lemma}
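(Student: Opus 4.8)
The plan is to mimic, essentially line for line, the proof of Lemma~\ref{l:indiff}: the dual independence axiom has exactly the same algebraic shape as the independence axiom, and the convex combinations that appear stay inside $\Q$, since $\Q$ is a convex cone in $\Lc_0(0,1)$. So the argument transfers with only cosmetic changes of notation, and the \emph{dual Archimedean axiom plays no role here}.

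Concretely, I would fix $\varPhi\in\Q$, pick $\varPsi,\varUpsilon\in\Q$ with $\varPsi\sim\varPhi$ and $\varUpsilon\sim\varPhi$, and for an arbitrary $\alpha\in(0,1)$ set $\varTheta \eqdef (1-\alpha)\varPsi + \alpha\varUpsilon \in \Q$. By transitivity $\varPsi\sim\varUpsilon$. Completeness of $\prefeq$ forces one of the three alternatives $\varTheta\pref\varPsi$, $\varPsi\pref\varTheta$, or $\varTheta\sim\varPsi$; I would rule out the first two, which leaves $\varTheta\sim\varPsi\sim\varPhi$ and hence the convexity of the indifference set.

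To exclude $\varTheta\pref\varPsi$: since $\varPsi\sim\varUpsilon$, transitivity also gives $\varTheta\pref\varUpsilon$. Writing $\varTheta = (1-\alpha)\varTheta + \alpha\varTheta$ and applying the dual independence axiom twice — first to $\varTheta\pref\varPsi$ with mixing partner $\varTheta$, then to $\varTheta\pref\varUpsilon$ with mixing partner $\varPsi$ — yields
\[
\varTheta = (1-\alpha)\varTheta + \alpha\varTheta \pref (1-\alpha)\varPsi + \alpha\varTheta \pref (1-\alpha)\varPsi + \alpha\varUpsilon = \varTheta,
\]
that is, $\varTheta\pref\varTheta$, which contradicts the irreflexivity of $\pref$ (a consequence of the reflexivity of $\prefeq$). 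The symmetric case $\varPsi\pref\varTheta$ — whence also $\varUpsilon\pref\varTheta$ — is ruled out in the same way with the roles of $\varPsi$ and $\varTheta$ interchanged.

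I do not anticipate any genuine obstacle: the only verification beyond a verbatim copy of Lemma~\ref{l:indiff} is the already-noted fact that a convex combination of nondecreasing, left-continuous functions on $(0,1)$ is again nondecreasing and left-continuous, so that $\varTheta\in\Q$; everything else is bookkeeping with the dual independence axiom and completeness of the preorder.
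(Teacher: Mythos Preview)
Your proposal is correct and follows exactly the approach intended by the paper, which does not spell out a separate proof for Lemma~\ref{l:indiff-dual} but simply refers back to Lemma~\ref{l:indiff}. Your chain $\varTheta = (1-\alpha)\varTheta + \alpha\varTheta \pref (1-\alpha)\varPsi + \alpha\varTheta \pref (1-\alpha)\varPsi + \alpha\varUpsilon = \varTheta$ is precisely the computation in the proof of Lemma~\ref{l:indiff} rewritten in the quantile-function notation, and your remarks about $\Q$ being closed under convex combinations and the dual Archimedean axiom being unnecessary are both apt.
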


\begin{lemma}
\label{l:arch2q}
Suppose a total preorder $\prefeq$ on $\Q$ satisfies
the dual independence and Archimedean axioms. Then for all $\varPhi,\varPsi\in \Q$, satisfying the relation
$\varPhi \pref \varPsi$, and for all $\varUpsilon \in \Q$, there exists $\bar{\alpha}>0$ such that
\begin{equation}
\label{arch2q}
(1-\alpha) \varPhi + \alpha \varUpsilon \pref \varPsi \quad \text{and} \quad
\varPhi \pref (1-\alpha) \varPsi + \alpha \varUpsilon, \quad \forall\,\alpha\in [0,\bar{\alpha}].
\end{equation}
\end{lemma}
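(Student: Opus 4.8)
The plan is to imitate the proof of Lemma~\ref{l:arch2} almost verbatim, since the only ingredients used there are the independence axiom, the Archimedean axiom, and the convexity of indifference sets, and each of these has an exact counterpart in the present setting: the dual independence axiom, the dual Archimedean axiom, and Lemma~\ref{l:indiff-dual}. Moreover, $\Q$ is a convex cone in $\Lc_0(0,1)$, so every convex combination of quantile functions written below is again a nondecreasing, left-continuous function, i.e.\ an element of $\Q$; thus all intermediate prospects are legitimate, and no extra care beyond the original argument is needed.

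First I would establish the left relation $(1-\alpha)\varPhi + \alpha\varUpsilon \pref \varPsi$ by distinguishing three cases according to the position of $\varUpsilon$ relative to $\varPsi$. In the case $\varPsi \pref \varUpsilon$, the dual Archimedean axiom applied to $\varPhi \pref \varPsi \pref \varUpsilon$ yields $\bar{\alpha}_1\in(0,1)$ with $(1-\bar{\alpha}_1)\varPhi + \bar{\alpha}_1\varUpsilon \pref \varPsi$; for $\alpha\in[0,\bar{\alpha}_1]$ and $\beta=\alpha/\bar{\alpha}_1\in[0,1]$ one has $(1-\alpha)\varPhi + \alpha\varUpsilon = (1-\beta)\varPhi + \beta\big[(1-\bar{\alpha}_1)\varPhi + \bar{\alpha}_1\varUpsilon\big]$, and two applications of the dual independence axiom give $(1-\alpha)\varPhi + \alpha\varUpsilon \pref (1-\beta)\varPhi + \beta\varPsi \pref \varPsi$. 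In the case $\varUpsilon \pref \varPsi$, two applications of the dual independence axiom give $(1-\alpha)\varPhi + \alpha\varUpsilon \pref (1-\alpha)\varPsi + \alpha\varUpsilon \pref \varPsi$ for all $\alpha\in[0,1)$. In the case $\varUpsilon \sim \varPsi$, Lemma~\ref{l:indiff-dual} gives $(1-\alpha)\varPsi + \alpha\varUpsilon \sim \varPsi$ for $\alpha\in[0,1)$, and the dual independence axiom applied to $\varPhi\pref\varPsi$ then yields $(1-\alpha)\varPhi + \alpha\varUpsilon \pref (1-\alpha)\varPsi + \alpha\varUpsilon \sim \varPsi$. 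In every case the left relation holds on some interval $[0,\bar{\alpha}_1]$ with $\bar{\alpha}_1>0$.

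Next I would obtain the right relation by the reversal trick used in Lemma~\ref{l:arch2}: define $\varPsi \pref_{-1} \varPhi \iff \varPhi \pref \varPsi$. This is again a total preorder on $\Q$ satisfying the dual independence and dual Archimedean axioms, and the left relation for $\pref_{-1}$ is precisely the right relation $\varPhi \pref (1-\alpha)\varPsi + \alpha\varUpsilon$ for $\pref$. Applying the first part to $\pref_{-1}$ produces $\bar{\alpha}_2>0$ on which the right relation holds. Setting $\bar{\alpha} = \min\{\bar{\alpha}_1,\bar{\alpha}_2\}$ then gives both relations of \eqref{arch2q} simultaneously for all $\alpha\in[0,\bar{\alpha}]$.

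I do not expect a genuine obstacle here: the argument is formally identical to that of Lemma~\ref{l:arch2}, and the only point that even warrants a remark is the one noted above, that $\Q$ being a convex cone makes all the convex combinations appearing in the proof bona fide prospects, so the three-case analysis transfers without modification.
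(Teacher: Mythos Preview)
Your proposal is correct and is exactly the approach the paper intends: the paper states Lemma~\ref{l:arch2q} without proof, introducing it with the phrase ``Similarly to Lemmas \ref{l:indiff} and \ref{l:arch2},'' so the implicit proof is precisely the verbatim transcription of the argument for Lemma~\ref{l:arch2} that you have carried out. Your added remark that $\Q$ is a convex cone, so all intermediate convex combinations remain in $\Q$, is the only point that even needs mentioning in the transfer, and you have handled it.
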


\subsection{Affine Numerical Representation}

This section corresponds to \$ \ref{s:affine-utility} and it contains the proof of existence of an affine utility functional
representing a total preorder, which satisfies the dual independence and Archi\-me\-dean axioms. To the best of our knowledge, this result is new in its formulation
and derivation.

It is convenient for our derivations to consider the linear span of $\Q$ defined as follows:
\[
\lin(\Q)=\bigg\{\sum_{i=1}^k\alpha_i\varPhi_i: \alpha_i\in\R,\,\varPhi_i\in\Q,\, i=1,\dots,k,\,k\in \Nb\bigg\} = \Q-\Q,
\]
where $\Q-\Q$ is the Minkowski sum of the sets $\Q$ and $-\Q$.
The relation follows from the fact that $\Q$ is a convex cone.
\begin{theorem}
\label{t:affine-quantile}
If a total preorder $\prefeq$ on $\Q$  satisfies the dual independence and Archi\-me\-dean axioms, then  a linear functional on $\lin(\Q)$ exists, whose restriction to $\Q$ is a numerical representation of~$\,\prefeq$.
\end{theorem}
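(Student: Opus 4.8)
The plan is to mimic the proof of Theorem~\ref{t:vNM1}, replacing the prospect space $\Pc(\Sc)$ of probability measures by the prospect cone $\Q$ of quantile functions and the ambient space $\M_0(\Sc)$ by $\lin(\Q)=\Q-\Q$. First I would define, in $\lin(\Q)$, the set
\[
C_0 = \{\varPhi-\varPsi : \varPhi,\varPsi\in\Q,\ \varPhi\pref\varPsi\},
\]
and show that it is convex by the same two-step application of the dual independence axiom used for $C_0$ in Theorem~\ref{t:vNM1}: for $\varPhi\pref\varPsi$ and $\varLambda\pref\varSigma$ one gets $\alpha\varPhi+(1-\alpha)\varLambda\pref\alpha\varPsi+(1-\alpha)\varLambda\pref\alpha\varPsi+(1-\alpha)\varSigma$ for $\alpha\in(0,1)$, so $\alpha(\varPhi-\varPsi)+(1-\alpha)(\varLambda-\varSigma)\in C_0$. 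Then I would set $C=\{\alpha\vartheta:\vartheta\in C_0,\ \alpha>0\}$, which is a convex cone contained in $\lin(\Q)$, and note that $0\notin C$ because $0\in C$ would force $\varPhi\pref\varPhi$ for some $\varPhi$, contradicting reflexivity/the definition of $\pref$.

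The crux of the argument, exactly as before, is to show $C=\core(C)$, i.e.\ that $C$ is algebraically open. Here is where the geometry of $\Q$ differs from $\Pc(\Sc)$, and this is the step I expect to be the main obstacle. Given $\vartheta\in C$, a nonzero direction $\varDelta\in\lin(\Q)$, and the ray $z(\tau)=\vartheta+\tau\varDelta$, I must show $z(\tau)\in C$ for small $\tau>0$. Since $C$ is a cone I may rescale so that $\vartheta=\varPhi-\varPsi$ with $\varPhi,\varPsi\in\Q$, $\varPhi\pref\varPsi$, and I write $\varDelta=\varDelta^+-\varDelta^-$ as a difference of two elements of $\Q$ (possible since $\lin(\Q)=\Q-\Q$; unlike the Jordan decomposition I cannot normalize "total mass", but I do not need to — I only need \emph{some} fixed representation). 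The key decomposition analogous to \eqref{core1} is
\[
\varPhi-\varPsi + t\varDelta = \big[(1-t)\varPhi + t\varDelta^+\big] - \big[(1-t)\varPsi + t\varDelta^-\big] + t(\varPhi-\varPsi),
\]
and both bracketed expressions lie in $\Q$ for $t\in[0,1]$ because $\Q$ is a convex cone (this is the place where using the cone $\Q$ rather than the convex set $\Pc(\Sc)$ actually makes life easier — closure under convex combinations is all that is needed, and no normalization issue arises). Now by dual independence $\varPhi\pref\tfrac12\varPhi+\tfrac12\varPsi\pref\varPsi$, and by Lemma~\ref{l:arch2q} applied to this strict chain there is $t_0>0$ such that for all $t\in[0,t_0]$,
\[
(1-t)\varPhi + t\varDelta^+ \pref \tfrac12\varPhi+\tfrac12\varPsi \pref (1-t)\varPsi + t\varDelta^-,
\]
so the difference of the two brackets lies in $C_0\subset C$; adding the element $t(\varPhi-\varPsi)\in C$ and using that $C$ is a convex cone gives $z(t)\in C$. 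Rescaling back yields $\vartheta+\tau\varDelta\in C$ for all sufficiently small $\tau>0$, hence $\core(C)=C$.

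Having established that $C$ is a convex set with nonempty algebraic interior (indeed $C=\core(C)$) and $0\notin C$, I would invoke Theorem~\ref{t:sepcore} to separate $0$ from $C$: there is a linear functional $U$ on $\lin(\Q)$ with $U(\vartheta)>0$ for all $\vartheta\in C$. In particular, for $\varPhi,\varPsi\in\Q$, $\varPhi\pref\varPsi$ implies $U(\varPhi)-U(\varPsi)=U(\varPhi-\varPsi)>0$. Conversely, if $U(\varPhi)>U(\varPsi)$ then, since the preorder is total, we cannot have $\varPsi\prefeq\varPhi$ giving $\varPsi\pref\varPhi$ or $\varPsi\sim\varPhi$; the strict case is excluded by what we just proved, and $\varPsi\sim\varPhi$ would give (reversing roles, $\varPsi\prefeq\varPhi$ and $\varPhi\prefeq\varPsi$) — more directly, one checks that if $\varPhi\sim\varPsi$ then $\varPhi-\varPsi$ cannot lie in $C$ nor can $\varPsi-\varPhi$, and a short argument via the definition of $\pref$ (or via Lemma~\ref{l:indiff-dual}) shows $U(\varPhi)=U(\varPsi)$ on indifference classes; hence $U(\varPhi)>U(\varPsi)$ forces $\varPhi\pref\varPsi$. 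Thus $U$ restricted to $\Q$ is a numerical representation of $\prefeq$, completing the proof. No extension step beyond $\lin(\Q)$ is needed here since $\lin(\Q)$ is already the natural linear hull and the theorem only claims a functional on $\lin(\Q)$.
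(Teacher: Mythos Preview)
Your proposal is correct and follows essentially the same route as the paper's proof: define the difference set, show it (or its cone hull) is a convex cone equal to its own algebraic interior via the decomposition and Lemma~\ref{l:arch2q}, then separate from~$0$. The only variation is that you form $C$ as the cone hull of $C_0$ (mirroring Theorem~\ref{t:vNM1}), whereas the paper proves directly that $\{\varPhi-\varPsi:\varPhi\pref\varPsi\}$ is already a cone, using $0\in\Q$ and the dual independence axiom (together with Lemma~\ref{l:indiff-dual}) to obtain positive homogeneity of strict preference; your shortcut is harmless and arguably cleaner.
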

\begin{proof}
Define in the space $\lin(\Q)$ the set
\[
C = \{\varPhi -\varPsi: \varPhi\in \Q,\, \varPsi\in \Q,\, \varPhi \pref \varPsi\}.
\]
Exactly as in the proof of Theorem \ref{t:vNM1},
we can prove that $C$ is convex. We shall prove that it is a  cone. Suppose
$\varPhi \pref \varPsi$ and let $\alpha > 0$. If $\alpha\in(0,1)$, then the independence axiom implies that
\[
\alpha\varPhi = \alpha\varPhi + (1-\alpha)\zero \pref \alpha\varPsi + (1-\alpha)\zero = \alpha\varPsi.
\]
Consider $\alpha>1$, and suppose $\alpha \varPsi \prefeq \alpha \varPhi$. If $\alpha \varPsi \pref \alpha \varPhi$,
then, owing to the independence axiom, we obtain a contradiction:
$\varPsi=\frac{1}{\alpha}(\alpha\varPsi)\pref \frac{1}{\alpha}(\alpha\varPhi) = \varPhi$. Consider the case when $\alpha \varPsi \sim \alpha \varPhi$.
By virtue of Lemma \ref{l:indiff-dual} and the independence axiom, for any $\beta\in (0,1/\alpha)$ we obtain a contradiction in the following way:
\begin{align*}
\alpha \varPsi &\sim \beta (\alpha \varPhi) + (1-\beta)(\alpha\Psi)   = (\beta\alpha)\varPhi + (1-\beta\alpha)\Big[ \frac{(1-\beta)\alpha}{1-\beta\alpha}\varPsi\Big] \\
& \pref (\beta\alpha)\varPsi + (1-\beta)(\alpha\Psi)  = \alpha\varPsi.
\end{align*}
Therefore, $\alpha\varPhi \pref \alpha\varPsi$ for all $\alpha>0$.
We conclude that
for every $\alpha>0$ the element $\alpha(\varPhi-\varPsi)\in C$. Consequently, $C$ is a convex cone.

To prove that the algebraic interior of $C$ is nonempty, and that in fact $C=\core(C)$, we repeat the argument
 from the proof of Theorem \ref{t:vNM1}. Consider any $\varGamma\in C$,
a  function $\varUpsilon\in \lin(\Q)$, and the ray
$Z(\tau) = \varGamma + \tau \varUpsilon$, where $ \tau>0$. By the definition of $\lin(\Q)$,
we can represent  $\varUpsilon=\varUpsilon^+ - \varUpsilon^-$, with $\varUpsilon^+ , \varUpsilon^-\in \Q$.

Since $\varGamma\in C$, we can represent it as a difference $\varGamma=\varPhi-\varPsi$, with $\varPhi,\varPsi\in \Q$,
and $\varPhi \pref \varPsi$. Then
\begin{equation}
\label{core1q}
\varGamma + t \varUpsilon = \big[ (1-t)\varPhi + t \varUpsilon^+\big] - \big[ (1-t)\varPsi + t \varUpsilon^-\big] + t \varGamma.
\end{equation}
Both expressions in brackets are elements of $\Q$. By the dual independence axiom,
\[
\varPhi \pref \frac{1}{2}\varPhi + \frac{1}{2}\varPsi \pref \varPsi.
\]
By Lemma \ref{l:arch2q}, there exists $t_0>0$, such that for all $t\in [0,t_0]$ we also have
\[
(1-t)\varPhi + t \varUpsilon^+ \pref \frac{1}{2}\varPhi + \frac{1}{2}\varPsi \pref (1-t)\varPsi + t \varUpsilon^-.
\]
This proves that
\[
\big[ (1-t)\varPhi + t \varUpsilon^+\big] - \big[ (1-t)\varPsi + t \varUpsilon^-\big] \in C,
\]
provided that $t\in [0,t_0]$. Thus relation \eqref{core1q} implies that for every $t\in [0,t_0]$
the point $\varGamma + t \varUpsilon$  is a sum of two elements of $C$.
Since the set $C$ is a convex cone, this point is also an element of $C$.

As $C$ is convex, $C=\core(C)$, and $0\notin C$, the  assumptions of Theorem \ref{t:sepcore} are satisfied. Therefore, $\zero$ and $C$ can be separated strictly:
there exists a linear functional $U$ on $\lin(\Q)$,
such that
$U(\varGamma) > 0$, for all $\varGamma \in C$.
Thus,
\[
U(\varPhi) - U(\varPsi) > 0,\quad \text{if}\quad \varPhi \pref \varPsi,
\]
as required.
\end{proof}

\subsection{Integral Representation with Rank Dependent Utility Functions}

In order to derive an integral representation of the numerical representation $U(\cdot)$ of the preorder $\prefeq$,
we need stronger conditions, than those of Theorem \ref{t:affine-quantile}. Two issues are important
in this respect:
\begin{tightitemize}
\item Continuity of $U(\cdot)$ on an appropriate complete topological vector space containing the set~$\Q$ of quantile functions; and
\item Integral representation of a continuous linear functional on this space.
\end{tightitemize}
The first issue cannot be easily resolved in a way similar to the proof of Theor\-em~\ref{t:vNM2}. Even if we assume
continuity of the preorder $\prefeq$ (in some topology), we can prove continuity of $U(\cdot)$ on $\Q$, but
there is no general way to derive from this the continuity of $U(\cdot)$ on some complete topological vector space
containing $\Q$. That is why, we adopt a different approach and derive continuity from monotonicity.

Consider the algebra $\varSigma$ of all sets obtained by finite unions and intersections of intervals of the form
 $(a,b]$ in $(0,1]$, where $0<a<b\le 1$. We define the space $B\big((0,1],\varSigma\big)$ of all bounded
functions on $(0,1]$ that can be obtained as \emph{uniform limits} of sequences of simple functions.
Recall that a \emph{simple function} is a function of the following form:
 \begin{equation}
 \label{simple-a}
 f(p) = \sum_{i=1}^n \alpha_i \1_{A_i}(p),\quad p\in (0,1],
 \end{equation}
 where $\alpha_i\in \R$ for $i=1,\dots,n$, and $A_i$, $i=1,\dots,n$, are disjoint elements of the field $\varSigma$.
In the formula above, $\1_A(\cdot)$ denotes the characteristic function of a set $A$.

The space $B\big((0,1],\varSigma\big)$, equipped with the supremum norm:
 \[
 \|\varPhi\| = \sup_{0<p\le 1}\varPhi(p),
 \]
 is a Banach space. The reader may consult \cite[Ch. III]{Dunford:1958} for  information
 about integration with respect to a finitely additive measure and spaces of bounded functions.

 From now on, we shall consider
only \emph{compactly supported distributions}, and the prospect space $Q_{\textup{b}}$ of all bounded, nondecreasing, and left-continuous
functions on $(0,1]$.\footnote{Bounded nondecreasing functions on $(0,1)$ can be extended to $(0,1]$ by assigning their left limits as their values at~1.}
The set $Q_{\textup{b}}$ is contained in $B\big((0,1],\varSigma\big)$. Indeed, every monotonic function may have only countably many jumps,
their sizes are summable due to the boundedness of the function, and owing to left-continuity it can be represented as a uniform limit
of simple functions.

For two functions $\varPhi$ and $\varPsi$ in $B\big((0,1],\varSigma\big)$, we write $\varPhi \ge \varPsi$, if
$\varPhi(p)\ge \varPsi(p)$ for all $p\in (0,1)$.

\begin{definition}
\label{d:monotonic_dual}
A preorder $\prefeq$ on $\Q_{\textup{b}}$ is \emph{monotonic} with respect to the partial
ord\-er~$\geq$,
 if for all $\varPhi,\varPsi\in \Q_{\textup{b}}$, the implication
$\varPhi \ge \varPsi \;\Longrightarrow\; \varPhi \prefeq \varPsi$
is true.
\end{definition}


\begin{theorem}
\label{t:quantile-cont}
If a total preorder $\prefeq$ on $\Q_{\textup{b}}$ is continuous, monotonic, and satisfies the dual independence axiom,
then a linear continuous functional
on $B\big((0,1],\varSigma\big)$ exists, whose restriction to $\Q_{\textup{b}}$ is a numerical representation of~$\prefeq$.
\end{theorem}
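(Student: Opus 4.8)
The plan is to upgrade the affine representation of Theorem~\ref{t:affine-quantile} to a norm-continuous one by exploiting the order-unit structure of $B\big((0,1],\varSigma\big)$, and then to extend it to the whole space by density.

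\emph{Step 1: Invoke the affine representation.} First I would observe that, exactly as at the start of the proof of Theorem~\ref{t:vNM2}, continuity of $\prefeq$ in the supremum-norm topology that $\Q_{\textup{b}}$ inherits from $B\big((0,1],\varSigma\big)$ implies the dual Archimedean axiom: for fixed $\varPhi,\varUpsilon\in\Q_{\textup{b}}$ the map $\alpha\mapsto\alpha\varPhi+(1-\alpha)\varUpsilon$ is norm-continuous from $[0,1]$ into $\Q_{\textup{b}}$, while the sets $\{\varPi\in\Q_{\textup{b}}:\varPi\pref\varPsi\}$ and $\{\varPi\in\Q_{\textup{b}}:\varPsi\pref\varPi\}$ are open; evaluating at $\alpha=1$ and at $\alpha=0$ and using openness produces the required $\alpha,\beta\in(0,1)$. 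Thus the hypotheses of Theorem~\ref{t:affine-quantile} are met (its proof uses only that the prospect set is a convex cone together with Lemmas~\ref{l:indiff-dual}--\ref{l:arch2q}, so it is valid verbatim with $\Q$ replaced by $\Q_{\textup{b}}$), and it provides a linear functional $U$ on $\lin(\Q_{\textup{b}})=\Q_{\textup{b}}-\Q_{\textup{b}}$ whose restriction to $\Q_{\textup{b}}$ is a numerical representation of $\prefeq$.

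\emph{Step 2: Monotonicity equals positivity, and the order unit.} Next I would turn monotonicity of $\prefeq$ into positivity of $U$: if $\varGamma=\varPhi-\varPsi\ge 0$ with $\varPhi,\varPsi\in\Q_{\textup{b}}$, then $\varPhi\ge\varPsi$ pointwise, hence $\varPhi\prefeq\varPsi$ by Definition~\ref{d:monotonic_dual}, and since $U$ represents $\prefeq$ this gives $U(\varPhi)\ge U(\varPsi)$, i.e.\ $U(\varGamma)\ge 0$. The key structural remark is that the constant function $\1=\1_{(0,1]}$ belongs to $\Q_{\textup{b}}$ and is an order unit of $B\big((0,1],\varSigma\big)$: any $\varGamma$ with $\|\varGamma\|\le 1$ satisfies $-\1\le\varGamma\le\1$. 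Consequently, for $\varGamma\in\lin(\Q_{\textup{b}})$ with $\|\varGamma\|\le 1$ both $\1-\varGamma$ and $\1+\varGamma$ are nonnegative elements of $\lin(\Q_{\textup{b}})$, so positivity yields $|U(\varGamma)|\le U(\1)$. Hence $U$ is bounded on $\lin(\Q_{\textup{b}})$ with $\|U\|\le U(\1)$.

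\emph{Step 3: Density and extension.} Finally, each indicator $\1_{(a,1]}$ with $0\le a\le 1$ is nondecreasing and left-continuous, hence lies in $\Q_{\textup{b}}$; therefore $\1_{(a,b]}=\1_{(a,1]}-\1_{(b,1]}\in\lin(\Q_{\textup{b}})$ for every half-open interval, and so $\lin(\Q_{\textup{b}})$ contains every simple function of the form~\eqref{simple-a}. Since $B\big((0,1],\varSigma\big)$ is by definition the supremum-norm closure of the simple functions, $\lin(\Q_{\textup{b}})$ is dense in it, and the bounded linear functional $U$ extends uniquely to a continuous linear functional on all of $B\big((0,1],\varSigma\big)$; its restriction to $\Q_{\textup{b}}$ is unchanged and hence still a numerical representation of $\prefeq$. (In the degenerate case where all quantile functions are indifferent, $U\equiv 0$ serves trivially.) The only step with real content is Step~2: recognizing that the monotonicity axiom is exactly positivity of $U$ and that $\1$ is an order unit is what converts the purely algebraic functional furnished by Theorem~\ref{t:affine-quantile} into a norm-continuous one; Steps~1 and~3 are routine verifications.
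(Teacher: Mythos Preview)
Your proof is correct and somewhat cleaner than the paper's. Both proofs begin identically: continuity yields the dual Archimedean axiom, Theorem~\ref{t:affine-quantile} produces a linear $U$ on $\lin(\Q_{\textup{b}})$, and monotonicity of $\prefeq$ translates into positivity of $U$ (the paper phrases this last point as ``monotonicity on simple functions,'' but the argument is the same). The divergence is in how continuity is obtained and how the extension is carried out. You invoke the order-unit property of $\1_{(0,1]}$ to get $|U(\varGamma)|\le U(\1)\,\|\varGamma\|$ in one line, then extend by density of simple functions via the standard bounded-extension theorem. The paper instead constructs the extension by hand: for each $\varGamma\in B((0,1],\varSigma)$ it builds simple approximants $\varPhi_n\le\varGamma\le\varPsi_n$, uses monotonicity to show $U(\varPhi_n)$ and $U(\varPsi_n)$ squeeze together, defines $U(\varGamma)$ as the common limit, and then verifies linearity, monotonicity, and continuity of the extension separately. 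Your route is more conceptual and shorter; the paper's route is more explicit and makes the role of simple-function approximation visible at every step. A minor by-product worth noting: in both arguments, continuity of the preorder is used only to secure the Archimedean axiom; the continuity of $U$ itself comes entirely from monotonicity.
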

\begin{proof}
Since the continuity axiom implies the Archimedean axiom, Theorem \ref{t:affine-quantile} implies the existence
of a linear functional $U:\lin(\Q_{\textup{b}})\to\R$ whose restriction to $\Q_{\textup{b}}$ is a numerical representation
of~$\prefeq$. The continuity axiom implies the continuity of the functional $U(\cdot)$  on $\Q_{\textup{b}}$.
We shall extend $U(\cdot)$ to a continuous functional on the entire space $B\big((0,1],\varSigma\big)$.

Every simple function can be expressed as
\[
 \varPhi = \sum_{i=1}^n z_i \1_{(p_i,p_{i+1}]} = \sum_{z_i<0} |z_i|\big( \1_{(p_{i+1},1]}  - \1_{(p_{i},1]}\big)
 + \sum_{z_i>0}z_i\big( \1_{(p_i,1]} -  \1_{(p_{i+1},1]}\big).
\]
with $0=p_1<p_2<\dots<p_{n+1}=1$, and thus is an element of $\lin(\Q_{\textup{b}})$.
Consequently, the linear functional $U(\cdot)$ is well-defined on the space of simple functions. Moreover,
rearranging terms, we see that $\varPhi$ is a difference of two simple functions in $\Q_{\textup{b}}$.

 Since the preorder $\prefeq$ is monotonic,  the linear functional $U(\cdot)$ is monotonic on $\Q_{\textup{b}}$.
 We shall prove that it is also monotonic on the set of simple functions in $B\big((0,1],\varSigma\big)$.
Let $\varPhi$ and $\varPsi$ are two simple functions, and let $\varPhi\ge \varPsi$. Then $\varPhi=\varPhi_1-\varPhi_2$,
 $\varPsi=\varPsi_1-\varPsi_2$, where $\varPhi_1,\varPhi_2,\varPsi_1,\varPsi_2\in \Q_{\textup{b}}$, and
 \[
 \varPhi_1+\varPsi_2 \ge \varPhi_2+\varPsi_1.
 \]
 As both sides are elements of $\Q_{\textup{b}}$ and $U(\cdot)$ is nondecreasing in $\Q_{\textup{b}}$ and linear,
 regrouping the terms we obtain
 \[
 U(\varPhi) - U(\varPsi) = U(\varPhi_1-\varPhi_2-\varPsi_1+\varPsi_2)= U(\varPhi_1+\varPsi_2) - U(\varPhi_2+\varPsi_1) \ge 0.
 \]
 This proves the monotonicity of $U(\cdot)$ on the subspace of simple functions.

  For any function $\varGamma\in B\big((0,1],\varSigma\big)$,  we construct two sequences of simple functions: $\{\varPhi_n\}$ and $\{\varPsi_n\}$
 such that $\varPhi_n \le \varGamma \le \varPsi_n$, for $n=1,2,\dots$,
and
 \[
 \varGamma=\lim_{n\to\infty}\varPhi_n = \lim_{n\to\infty}\varPsi_n.
 \]
 The sequence $\{U(\varPhi_n)\}$ is bounded from above by $U(\varPsi_k)$ for any $k$, due to the monotonicity of $U(\cdot)$.
 Similarly, the sequence $\{U(\varPsi_n)\}$ is bounded from below by $U(\varPhi_k)$ for any $k$.
 Moreover,
  \begin{align*}
 0 &\le U(\varPsi_n) - U(\varPhi_n) = U(\varPsi_n - \varPhi_n) \\
 &\le U\big(\|\varPsi_n - \varPhi_n\|\1_{(0,1]}\big)
 = U(\1_{(0,1]})\|\varPsi_n - \varPhi_n\| \to 0.
 \end{align*}
 Therefore, both sequences $\{U(\varPhi_n)\}$ and $\{U(\varPsi_n)\}$ have the same limit and we can define
 \[
 U( \varGamma)=\lim_{n\to\infty}U(\varPhi_n) = \lim_{n\to\infty}U(\varPsi_n).
 \]

We may use any sequence of simple functions $\varGamma_n\to \varGamma$ to calculate $U(\varGamma)$. Indeed,
 setting $\varPhi_n = \varGamma_n - \|\varGamma_n-\varGamma\|$ and $\varPsi_n = \varGamma_n + \|\varGamma_n-\varGamma\|$, we obtain
 $\varPhi_n \le \varGamma \le \varPsi_n$ and
 $\varPhi_n \le \varGamma_n \le \varPsi_n$. Consequently,  $U(\varPhi_n) \le U(\varGamma_n) \le U(\varPsi_n)$ and
 \[
 \lim_{n\to\infty}U(\varGamma_n) = U(\varGamma).
 \]
The functional $U:B\big((0,1],\varSigma\big)\to\R$ defined in this way is linear on the subspace of simple functions, which is a subspace of  $\lin(\Q_{\textup{b}})$.
Consider two elements $\varPhi$ and $\varPsi$ of $B\big((0,1],\varSigma\big)$, and two sequences $\{\varPhi_n\}$ and $\{\varPsi_n\}$ of simple functions such that
$\varPhi_n\to\varPhi$ and $\varPsi_n\to\varPsi$. For any $a,b\in\R$, we obtain
\begin{align*}
U(a\varPhi+b\varPsi) &= \lim_{n\to\infty}U(a\varPhi_n+b\varPsi_n) = \lim_{n\to\infty}\big[ aU(\varPhi_n)+b U(\varPsi_n)\big] \\
&=  a \lim_{n\to\infty} U(\varPhi_n)+b \lim_{n\to\infty} U(\varPsi_n) = aU(\varPhi)+b U(\varPsi).
\end{align*}
This proves the linearity of $U(\cdot)$ on the whole space $B\big((0,1],\varSigma\big)$.

To verify monotonicity, consider two elements $\varPhi\le \varPsi$ in $B\big((0,1],\varSigma\big)$, and two sequences $\{\varPhi_n\}$ and $\{\varPsi_n\}$ of simple functions such that
$\varPhi_n\to\varPhi$, $\varPsi_n\to\varPsi$, and $\varPhi_n \le \varPhi \le \varPsi \le \varPsi_n$. As $U(\cdot)$ is monotonic
on the space of simple functions, we obtain $U(\varPhi_n) \le U(\varPsi_n)$, and thus $U(\varPhi) \le U(\varPsi)$.

To prove continuity, consider any element $\varPhi\in B\big((0,1],\varSigma\big)$. Owing to linearity and monotonicity of $U(\cdot)$, we obtain
 \begin{align*}
 U(\varPhi)
 \le U\big(\|\varPhi\|\1_{(0,1]}\big)
 = U(\1_{(0,1]})\|\varPhi\|.
 \end{align*}
Consequently, $U(\cdot)$ is continuous.
\end{proof}

Now, we can prove the main result of this section. It involves integration with respect to finitely additive measures, which we denote by the symbol $\ds$. To the best of our knowledge,
it is original in its formulation and derivation.
\begin{theorem}
\label{t:quantile-int}
Suppose the total preorder $\prefeq$ on $\Q_{\textup{b}}$ is continuous, monotonic, and satisfies the independence axiom.
Then a nonnegative, bounded, finitely additive measure $\mu$ on $\varSigma$ exists, such that the functional
\begin{equation}
\label{quantile-intmu}
U(\varPhi) = \int_0^1 \varPhi(p)\;\ds\mu,\quad \varPhi\in B\big((0,1],\varSigma\big),
\end{equation}
 is a numerical representation of $\,\prefeq$.
\end{theorem}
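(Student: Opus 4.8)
The plan is to read the integral representation directly off the dual space of $B\big((0,1],\varSigma\big)$. The hypotheses of this theorem are precisely those of Theorem~\ref{t:quantile-cont} (the reference to the ``independence axiom'' in the statement is the dual independence axiom), so the latter already supplies a continuous linear functional $U$ on $B\big((0,1],\varSigma\big)$ whose restriction to $\Q_{\textup{b}}$ is a numerical representation of $\prefeq$. Inspecting that proof, one sees that $U$ is moreover monotonic on the whole space: $\varPhi\le\varPsi$ in $B\big((0,1],\varSigma\big)$ implies $U(\varPhi)\le U(\varPsi)$. These two properties of $U$---continuity and monotonicity---are all that will be used.

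The key step is to invoke the classical description of the topological dual of the space of totally $\varSigma$-measurable bounded functions: every continuous linear functional on $B\big((0,1],\varSigma\big)$ has the form $\varPhi\mapsto\int_0^1\varPhi(p)\,\ds\mu$ for a uniquely determined bounded finitely additive measure $\mu$ on $\varSigma$, the integral being understood as integration against a finitely additive measure, with $\|U\|$ equal to the total variation of $\mu$ (see \cite[Ch.~III--IV]{Dunford:1958}). Applying this to the functional $U$ obtained above produces the measure $\mu$ and formula~\eqref{quantile-intmu}; its boundedness is immediate from the continuity of $U$.

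It then remains only to check that $\mu$ is nonnegative, and here monotonicity does the work: for every $A\in\varSigma$ the indicator $\1_A$ is a simple function, hence an element of $B\big((0,1],\varSigma\big)$, and $\1_A\ge 0$, so $\mu(A)=U(\1_A)\ge U(0)=0$. As $A$ is arbitrary, $\mu\ge 0$, and in particular its variation is $\mu\big((0,1]\big)=U(\1_{(0,1]})<\infty$. Since $U|_{\Q_{\textup{b}}}$ represents $\prefeq$ by construction, \eqref{quantile-intmu} is the asserted representation.

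I do not expect a serious obstacle here: the substance lies in having the right duality theorem for $B\big((0,1],\varSigma\big)$ together with the accompanying theory of integration with respect to finitely additive measures, after which the monotonicity argument for $\mu\ge 0$ is routine and, notably, no discretization of distributions is required---in contrast with the usual derivations of rank dependent utility. The one point that deserves a little care is verifying that the integral $\int_0^1\varPhi\,\ds\mu$ is well defined for \emph{every} $\varPhi\in B\big((0,1],\varSigma\big)$---which it is, precisely because such $\varPhi$ are uniform limits of simple functions---so that \eqref{quantile-intmu} holds as stated on the full space and not merely on $\Q_{\textup{b}}$.
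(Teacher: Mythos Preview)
Your proposal is correct and follows essentially the same route as the paper: invoke Theorem~\ref{t:quantile-cont} to obtain a continuous linear (and monotonic) functional $U$ on $B\big((0,1],\varSigma\big)$, apply the Dunford--Schwartz duality \cite[Thm.~IV.5.1]{Dunford:1958} to write $U$ as integration against a bounded finitely additive measure $\mu$, and deduce $\mu\ge 0$ from the monotonicity of $U$. Your write-up is somewhat more explicit than the paper's three-line proof (in particular, you spell out the argument $\mu(A)=U(\1_A)\ge U(0)=0$ that the paper leaves implicit), but the ideas are identical.
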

\begin{proof}
The functional $U:B\big((0,1],\varSigma\big)\to\R$ constructed in the proof of Theorem~\ref{t:quantile-cont} is linear and continuous.
By virtue of \cite[Theorem IV.5.1]{Dunford:1958}, it has the form (\ref{quantile-intmu}) of an integral with respect to
a certain bounded and finitely additive meas\-ure~$\mu$. As $U(\cdot)$ is nondecreasing, $\mu$ is nonnegative.
\end{proof}

Under additional conditions, we can write the integral \eqref{quantile-intmu} in a more familiar form of a Stieltjes integral.
We define a nondecreasing and bounded function $w:[0,1]\to \R_{+}$ as follows:
\begin{equation}
\label{wdist}
w(p) = \mu\big((0,p]\big),\quad p\in(0,1];\quad w(0)=0.
\end{equation}
If the jump points of $\varPhi(\cdot)$ and $w(\cdot)$ do not coincide, we can rewrite \eqref{quantile-intmu} as follows:
\begin{equation}
\label{quantile-int}
U(\varPhi) = \int_0^1 \varPhi(p)\; d w(p).
\end{equation}
In general, however, to validate the integral representation \eqref{quantile-int}, we need a weaker topology
on the prospect space.
We use the $\Lc_1$-topology on the space $\Q_{\textup{b}}$ of quantile functions,
defined by the distance function
\[
\dist(\varPhi,\varPsi) = \int_0^1 |\varPhi(p)-\varPsi(p)|\;dp.
\]
\begin{theorem}
\label{t:rank-dependent}
Suppose the total preorder $\prefeq$ on $\Q_{\textup{b}}$ is monotonic, continuous in the $\Lc_1$-topology,
and satisfies the independence axiom.
Then a bounded, nondecreasing, and continuous function $w:[0,1]\to \R$ exists, such that the functional
\eqref{quantile-int}
 is a numerical representation of $\,\prefeq$.
\end{theorem}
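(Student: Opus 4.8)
The plan is to build on the finitely additive representation already established in Theorem~\ref{t:quantile-int}: I would show that the additional $\Lc_1$-continuity of $\prefeq$ forces the distribution function $w$ associated with the measure $\mu$ through \eqref{wdist} to be continuous, and that on the prospect space the finitely additive integral $\int_0^1\varPhi\,\ds\mu$ coincides with the Stieltjes integral $\int_0^1\varPhi\,dw$. To get started, observe that on $\Q_{\textup{b}}$ the $\Lc_1$-distance is dominated by the supremum norm, $\dist(\varPhi,\varPsi)\le\|\varPhi-\varPsi\|$, so the $\Lc_1$-topology is coarser and every $\Lc_1$-closed set is closed in the supremum norm. Hence $\prefeq$ is continuous in the sense used in Theorems~\ref{t:quantile-cont} and~\ref{t:quantile-int}, and those results supply a monotonic linear functional $U$ on $B\big((0,1],\varSigma\big)$, a nonnegative bounded finitely additive measure $\mu$ on $\varSigma$ with $U(\varPhi)=\int_0^1\varPhi\,\ds\mu$, and the fact that $U|_{\Q_{\textup{b}}}$ is a numerical representation of $\prefeq$. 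Re-running the contour-set argument from the proof of Theorem~\ref{t:vNM2} (which uses only convexity of $\Q_{\textup{b}}$, linearity of $U$, and that $U|_{\Q_{\textup{b}}}$ represents $\prefeq$), the $\Lc_1$-closedness of the contour sets now yields that $U$ restricted to $\Q_{\textup{b}}$ is continuous for the $\Lc_1$-distance.

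Next I would define $w$ by \eqref{wdist}, so that also $w(1)=\mu\big((0,1]\big)$, and prove that $w$ is continuous on $[0,1]$. The key observation is that for every $p\in(0,1)$ the indicator $\1_{(p,1]}$ is bounded, nondecreasing, and left-continuous, hence lies in $\Q_{\textup{b}}$, with $U\big(\1_{(p,1]}\big)=\mu\big((p,1]\big)=w(1)-w(p)$; moreover $\1_{(0,1]}\in\Q_{\textup{b}}$ and $\zero\in\Q_{\textup{b}}$. Since $\dist\big(\1_{(p,1]},\1_{(q,1]}\big)=|p-q|$, while $\dist\big(\1_{(p_n,1]},\1_{(0,1]}\big)=p_n\to0$ as $p_n\downarrow0$ and $\dist\big(\1_{(p_n,1]},\zero\big)=1-p_n\to0$ as $p_n\uparrow1$, the $\Lc_1$-continuity of $U$ on $\Q_{\textup{b}}$ converts each of these convergences into a convergence of $\mu\big((p_n,1]\big)$, and finite additivity then rewrites it as $w(p_n)\to w(p_0)$ for the corresponding $p_0\in[0,1]$. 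This gives continuity of $w$ from both sides at every interior point of $[0,1]$ and one-sided continuity at the two endpoints. Finally, nonnegativity of $\mu$ makes $w$ nondecreasing and its finiteness makes $w$ bounded; thus $w:[0,1]\to\R$ is bounded, nondecreasing and continuous.

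It remains to replace the finitely additive integral by the Stieltjes integral on $\Q_{\textup{b}}$. For a simple function $\varPhi=\sum_{i}z_i\1_{(p_i,p_{i+1}]}$ with $0=p_1<\dots<p_{n+1}=1$ one has, directly from \eqref{wdist} and finite additivity, $\int_0^1\varPhi\,\ds\mu=\sum_i z_i\big(w(p_{i+1})-w(p_i)\big)=\int_0^1\varPhi\,dw$. Now $U(\cdot)=\int_0^1(\cdot)\,\ds\mu$ is supremum-norm continuous on $B\big((0,1],\varSigma\big)$ by Theorem~\ref{t:quantile-cont}, while $\varPhi\mapsto\int_0^1\varPhi\,dw$ is well-defined on $\Q_{\textup{b}}$ (as $w$ is continuous and every $\varPhi\in\Q_{\textup{b}}$ has bounded variation) and supremum-norm continuous there because $\bigl|\int_0^1 g\,dw\bigr|\le\|g\|\,\big(w(1)-w(0)\big)$. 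Since every $\varPhi\in\Q_{\textup{b}}$ is a uniform limit of simple functions, passing to the limit shows $\int_0^1\varPhi\,\ds\mu=\int_0^1\varPhi\,dw$ for all $\varPhi\in\Q_{\textup{b}}$. Hence \eqref{quantile-int} is a numerical representation of $\prefeq$, which proves the theorem. Note that this does not require $\mu$ to be countably additive: continuity of $w$ together with uniform approximation by simple functions already pins down the representation on $\Q_{\textup{b}}$.

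I expect the third paragraph above — manufacturing continuity of $w$ out of the $\Lc_1$-continuity hypothesis — to be the main obstacle. One must be sure that the test functions $\1_{(p,1]}$ really lie in the \emph{one-sided} cone $\Q_{\textup{b}}$ of nondecreasing, left-continuous functions (so that $\Lc_1$-continuity of $U$ on $\Q_{\textup{b}}$ is applicable to them), one has to treat the two endpoints of $[0,1]$ separately via the auxiliary $\Lc_1$-limits toward $\1_{(0,1]}$ and toward $\zero$, and one must not lose the $\Lc_1$-continuity when transferring it from the preorder to the functional $U$, which is why the contour-set argument of Theorem~\ref{t:vNM2} has to be repeated in this weaker topology.
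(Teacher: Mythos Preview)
Your proposal is correct and follows essentially the same route as the paper: invoke Theorem~\ref{t:quantile-int}, test the $\Lc_1$-continuity of $U$ on the indicators $\1_{(p,1]}\in\Q_{\textup{b}}$ to force $w$ to be continuous, and then pass from the finitely additive integral to the Stieltjes integral. The only cosmetic difference is the last step---the paper infers from the continuity of $w$ that $\mu$ is countably additive and atomless, whereas you check directly that the two integrals agree on simple functions and pass to uniform limits; your version is slightly more self-contained.
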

\begin{proof}
The assumptions of Theorem \ref{t:quantile-int} are satisfied, and thus a finitely additive measure $\mu$ exists such that formula
\eqref{quantile-intmu} holds.
Define $w(\cdot)$ by \eqref{wdist}. As $\mu$ is nonnegative, $w(\cdot)$ is nondecreasing.

Consider a sequence of simple functions
$\1_{(p_n,1]}$, with $p_n\to p \in (0,1)$, as $n\to\infty$. They are elements of $\Q_{\textup{b}}$ and converge
in the $\Lc_1$-topology  to $\1_{(p,1]}$. The continuity of the preorder
$\prefeq$ in this topology implies that the numerical representation \eqref{quantile-intmu} is continuous. We obtain
\[
U\big(\1_{(p_n,1]}\big) = \mu\big((p_n,1]\big) = w(1) - w(p_n) \to U\big(\1_{(p,1]}\big) = w(1)-w(p).
\]
Thus $w(\cdot)$ is continuous in $(0,1)$.
If $p=1$, then $\1_{(p_n,1]}\to 0$,  and we obtain in the same way $w(p_n)\to w(1)$.

As $w(\cdot)$ is  continuous, $\mu$ is a regular, bounded,
countably additive, and atomless measure. Consequently,
the integral representation \eqref{quantile-intmu} can be written as a Stieltjes integral \eqref{quantile-int}.
\end{proof}

The function $w(\cdot)$ appearing in the integral representation \eqref{quantile-int} is called the \emph{rank-depend\-ent utility function} or
\emph{dual utility function}.
\begin{remark}
\label{r:guriev}
{\rm
An attempt to derive an even stronger representation, with a density of $w(\cdot)$ with respect to the Lebesgue measure, has been made in
\cite[Thm. 1]{guriev}. Unfortunately, the proof of that theorem contains an incorrigible error (lines 14--15 on page 132).
}
\end{remark}
\begin{remark}
\label{r:Quiggin}
{\rm
In a fundamental contribution, Quiggin \cite{Quiggin:1982} considers discrete distributions and derives from a different system of axioms
the existence of an \emph{anticipated utility functional}. In our notation, for a simple quantile function
$ \varPhi = \sum_{i=1}^n z_i \1_{(p_{i-1},p_{i}]}$
with $z_1\le z_2 \le \dots \le z_n$, and with cumulative probabilities $0=p_0 \le p_1 \le \dots \le p_n=1$,
 this functional has the form
 \begin{equation}
 \label{AU}
 U(\varPhi) = \sum_{i=1}^n u(z_i)\big[ w(p_i) - w(p_{i-1})\big],
 \end{equation}
 where $u(\cdot)$ and $w(\cdot)$ are nondecreasing functions (see \cite[Ch. 11]{Quiggin:1993} and \cite[Thm. 3.2]{QuigginWakker} and the references therein). This corresponds to \eqref{quantile-int} with $u(z)\equiv z$. The term \emph{rank-dependent utility function},
 which we adopt for $w(\cdot)$, is borrowed from this theory.
}
\end{remark}

\subsection{Choquet Integral Representation of Dual Utility}
\label{s:choquet}

We presented the dual utility theory in the prospect space of quantile functions, which is most natural for it. It is interesting,
though, to derive an equivalent representation in the prospect space of distribution functions. Every bounded quantile
function $\varPhi\in \Q_{\textup{b}}$ corresponds to a distribution function $F:\R \to [0,1]$ of a measure with bounded support,
\begin{align*}
\varPhi(p)  &= F^{-1}(p) \eqdef \inf\,\{ \eta\in \R: F(\eta) \ge p\},\\
F(z) &=  \varPhi^{-1}(z) \eqdef \begin{cases}
 \sup\,\{ p\in [0,1]: \varPhi(p) \le z\} & \text{if} \ z \ge \varPhi(0),\\
0 & \text{otherwise.}
\end{cases}
\end{align*}
The following theorem employs a form of integration by parts for the integral \eqref{quantile-intmu} and corresponds
to the representation derived in \cite{Schmeidler}.
\begin{theorem}
\label{t:schmeidler}
Suppose the total preorder $\prefeq$ on $\Q_{\textup{b}}$ is monotonic, continuous with respect to uniform convergence
and satisfies the dual independence axiom.
Then  a nondecreasing function $w:[0,1]\to[0,1]$ exists, satisfying $w(0)=0$ and $w(1)=1$,
and such that the functional
\begin{equation}
\label{schmeidler}
 U(F^{-1})= -\int_{-\infty}^0 w\big(F(z)\big) \;dz + \int_0^{\infty}\big[ 1 - w\big(F(z)\big)\big]\;dz
\end{equation}
 is a numerical representation of $\,\prefeq$.
\end{theorem}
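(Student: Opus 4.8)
The plan is to start from the finitely additive integral representation of Theorem~\ref{t:quantile-int} and then carry out an integration by parts. Continuity with respect to uniform convergence is exactly the continuity hypothesis underlying Theorem~\ref{t:quantile-int} (the supremum norm on $B\big((0,1],\varSigma\big)$, restricted to $\Q_{\textup{b}}$, is uniform convergence), so that theorem supplies a nonnegative, bounded, finitely additive measure $\mu$ on $\varSigma$ for which $U(\varPhi)=\int_0^1\varPhi(p)\,\ds\mu$ is a numerical representation of $\prefeq$. Since a numerical representation is only determined up to a positive rescaling, we may multiply $U$ by a positive constant and assume $\mu\big((0,1]\big)=1$; then $w(p)=\mu\big((0,p]\big)$ for $p\in(0,1]$, together with $w(0)=0$, defines a nondecreasing function $w:[0,1]\to[0,1]$ with $w(0)=0$ and $w(1)=1$. (The degenerate case $\mu\equiv 0$, where the preorder identifies all prospects, is excluded.)

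The core of the argument is the identity
\[
\int_0^1\varPhi(p)\,\ds\mu \;=\; -\int_{-\infty}^0 w\big(F(z)\big)\,dz \;+\; \int_0^{\infty}\big[\,1-w\big(F(z)\big)\,\big]\,dz,\qquad \varPhi=F^{-1}.
\]
Its geometric content is the elementary observation that, since $\varPhi$ is nondecreasing and left-continuous and $F=\varPhi^{-1}$, one has $\{p\in(0,1]:\varPhi(p)\le z\}=(0,F(z)]$, hence $\mu\{p:\varPhi(p)\le z\}=w\big(F(z)\big)$ and $\mu\{p:\varPhi(p)>z\}=1-w\big(F(z)\big)$. I would first establish the identity for a simple quantile function $\varPhi=\sum_{i=1}^n z_i\1_{(p_{i-1},p_i]}\in\Q_{\textup{b}}$ with $z_1\le\dots\le z_n$ and $0=p_0<\dots<p_n=1$: the left-hand side equals $\sum_i z_i\big[w(p_i)-w(p_{i-1})\big]$, while $z\mapsto w\big(F(z)\big)$ is the step function equal to $w(p_k)$ on $[z_k,z_{k+1})$, and substituting this into the right-hand side and using Abel summation returns the same quantity (equivalently, one integrates the layer-cake identity $\varPhi(p)=\int_0^{\infty}\1\{\varPhi(p)>z\}\,dz-\int_{-\infty}^0\1\{\varPhi(p)\le z\}\,dz$ against the finitely many relevant masses of $\mu$ and then exchanges the finite sum with the $z$-integral). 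Boundedness of the support of $\varPhi$ guarantees that both $z$-integrals are over a bounded set and hence finite.

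To upgrade the identity from simple functions to an arbitrary $\varPhi\in\Q_{\textup{b}}$, I would avoid invoking Fubini for the merely finitely additive $\mu$ and argue by monotonicity instead. Write $V(\varPhi)$ for the right-hand side of the displayed identity. Two properties of $V$ are immediate: if $\varPhi_1\le\varPhi_2$ then $F_1\ge F_2$, so $w\big(F_1(\cdot)\big)\ge w\big(F_2(\cdot)\big)$ and therefore $V(\varPhi_1)\le V(\varPhi_2)$; and a change of variables in the $z$-integrals gives the translation equivariance $V\big(\varPhi+c\,\1_{(0,1]}\big)=V(\varPhi)+c$ for every $c\in\R$. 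Now take simple functions $\varPhi_n\in\Q_{\textup{b}}$ with $\varepsilon_n:=\|\varPhi_n-\varPhi\|\to 0$; then $\varPhi-\varepsilon_n\1_{(0,1]}\le\varPhi_n\le\varPhi+\varepsilon_n\1_{(0,1]}$, whence $V(\varPhi)-\varepsilon_n\le V(\varPhi_n)\le V(\varPhi)+\varepsilon_n$, so $V(\varPhi_n)\to V(\varPhi)$. On the other hand $U$ is continuous for uniform convergence (as shown in the proof of Theorem~\ref{t:quantile-int}), so $U(\varPhi_n)\to U(\varPhi)$, and since $U(\varPhi_n)=V(\varPhi_n)$ for simple functions we conclude $U(\varPhi)=V(\varPhi)$, which is \eqref{schmeidler}.

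I expect the limiting argument of the last paragraph to be the main obstacle: $V$ is not linear in $\varPhi$, so density of the simple functions together with linearity of $U$ does not suffice, and one must instead exploit the monotonicity and translation equivariance of $V$ together with the sandwich bounds from uniform approximation. A secondary point requiring care is the exact identification $\{p:\varPhi(p)\le z\}=(0,F(z)]$, which rests on the left-continuity of $\varPhi$ and the precise definition of $F=\varPhi^{-1}$, and on which the whole integration by parts depends.
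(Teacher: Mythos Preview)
Your proposal is correct and follows essentially the same route as the paper: invoke Theorem~\ref{t:quantile-int}, normalize $\mu$ to define $w$, verify \eqref{schmeidler} for simple (step) quantile functions by Abel summation, and then pass to general $\varPhi\in\Q_{\textup{b}}$ by a monotone sandwich argument together with the uniform continuity of $U$. The only cosmetic difference is in the sandwich step: the paper brackets $\varPhi$ between two simple functions $\varPhi_n\le\varPhi\le\varPsi_n$ and uses monotonicity of $V$ alone, whereas you bracket $\varPhi_n$ between $\varPhi\pm\varepsilon_n\1_{(0,1]}$ and also use the (easily checked) translation equivariance of $V$.
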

\begin{proof}
Due to Theorem \ref{t:quantile-int}, the functional \eqref{quantile-intmu}
with some nonnegative, bounded, finitely additive measure $\mu$ on $\varSigma$, is a numerical representation of $\prefeq$.
Without loss of generality, we may assume that $\mu\big( (0,1]\big)=1$ and define the function $w(\cdot)$ by~\eqref{wdist}.

First, we check the formula \eqref{schmeidler} for a stepwise function $\varPhi(\cdot)$, given as follows:
 \begin{equation}
 \label{simple-B}
 \varPhi(x) = \sum_{i=1}^n z_i \1_{(p_i,p_{i+1}]}(x),\quad x\in (0,1],
 \end{equation}
 where $z_1\le z_2 \le \dots \le z_{n}$ and $0=p_1\le p_2 \le \dots \le p_n \le p_{n+1}=1$.
In the formula above, $\1_A(\cdot)$ denotes the characteristic function of a set $A$.

The integral \eqref{quantile-intmu} takes on the form:
\[
\int_0^1 \varPhi(p)\;\ds\mu = \sum_{i=1}^n z_i\mu\big((p_{i},p_{i+1}]\big)= \sum_{i=1}^n z_i\big[ w(p_{i+1})-w(p_i)\big].
\]
With no loss of generality we may assume that $z_k=0$ for some $k$. Then we can continue the above relations as follows:
\begin{align*}
\int_0^1 \varPhi(p)\;\ds\mu  &= \sum_{i=1}^{k-1} w(p_{i+1})(z_i-z_{i+1}) + \sum_{i=k}^{n-1}\big[ 1 - w(p_{i+1})\big](z_{i+1}-z_{i})\\
&= -\sum_{i=1}^{k-1} w\big(F(z_i)\big)(z_{i+1}-z_{i}) + \sum_{i=k}^{n-1}\big[ 1 - w\big(F(z_i)\big)\big](z_{i+1}-z_{i}).
\end{align*}
This proves the formula \eqref{schmeidler} for simple functions.

To prove it for a general function $\varPhi\in \Q_{\textup{b}}$, we consider two sequences of simple functions $\{\varPhi_n\}$ and $\{\varPsi_n\}$,
such that $\varPhi_n \le \varPhi \le \varPsi_n$, $n=1,2,\dots,$ and
\[
\|\varPsi_n-\varPhi_n\|_{\infty}\to 0\quad \text{as}\quad n\to\infty.
\]
Since $\varPhi_n^{-1}\ge \varPhi^{-1} \ge \varPsi_n^{-1}$ and $w(\cdot)$ is nondecreasing, we have
\begin{align*}
 U(\varPhi_n) &= -\int_{-\infty}^0 w\big(\varPhi_n^{-1}\big) \;dz + \int_0^{\infty}\big[ 1 - w\big(\varPhi_n^{-1}\big)\big]\;dz\\
 &\le  -\int_{-\infty}^0 w\big(\varPhi^{-1}\big) \;dz + \int_0^{\infty}\big[ 1 - w\big(\varPhi^{-1}\big)\big]\;dz \\
 &\le  -\int_{-\infty}^0 w\big(\varPsi_n^{-1}\big) \;dz + \int_0^{\infty}\big[ 1 - w\big(\varPsi_n^{-1}\big)\big]\;dz
  = U(\varPsi_n).
\end{align*}
The first and the last equation follow from the formula \eqref{schmeidler} for simple functions. Since $U(\cdot)$ is continuous,
the leftmost and the rightmost members converge to $U(\varPhi)$, as $n\to\infty$, and thus the middle member must be equal to $U(\varPhi)$.
\end{proof}
\begin{remark}
\label{r:schmeidler}
{\rm
The assertion of Theorem \ref{t:schmeidler}, in the case of distributions supported on $[0,1]$, is similar to
the assertion of \cite[Thm. 1]{Yaari:1987}. Our assumptions are weaker, however. We do not assume any uniform bound on all
quantile functions in the prospect space, and we assume continuity of the preorder $\prefeq$
with respect to the topology of uniform convergence,
rather than with respect to $\Lc_1$-topology, required in \cite[A3]{Yaari:1987}. Therefore, we could not resort to the expected utility
theory applied to the quantile functions, as in the proof of \cite[Thm.~1]{Yaari:1987}.
}
\end{remark}
\begin{remark}
\label{r:choquet}
{\rm
Formula \eqref{schmeidler} is a special case of the Choquet integral of the function $F^{-1}(\cdot)$ (see  \cite{Choquet}).
In our case we did not invoke the theory of capacities, because the prospect space contains only monotonic functions.
}
\end{remark}
\subsection{Risk Aversion}
\label{s:ra-quantile}

For every $\varPhi\in \Q_{\textup{b}}$ and any $\sigma$-subalgebra $\Gc$ of the Borel $\sigma$-algebra $\B$ on $\R$, the conditional expectation $\Ec_{\varPhi|\Gc}$ is defined as a $\Gc$-measurable function, satisfying the equation
\[
\int\limits_G \Ec_{\varPhi|\Gc}(z)\;d\varPhi^{-1}(z) = \int\limits_G z\;d\varPhi^{-1}(z),\quad  G\in \Gc.
 \]
 Observe that it is sufficient to require this equation for the smallest collection $\Jc$
 of intervals of form $(-\infty,c]$, generating $\Gc$:
 \[
\int_{-\infty}^c \Ec_{\varPhi|\Gc}(z)\;d\varPhi^{-1}(z)
= \int_{-\infty}^c z\;d\varPhi^{-1}(z),\quad  \forall\; (\infty,c]\in \Jc.
 \]
The corresponding quantile function of
$\Ec_{\varPhi|\Gc}$, denoted by $\varPhi_{\Gc}(p)$, is $\varPhi^{-1}(\Gc)$-measurable and satisfies the equation
\[
\int\limits_{\varPhi^{-1}\big((\infty,c]\big)}\hspace{-1em} \varPhi_{\Gc}(p)\;dp =
\int\limits_{\varPhi^{-1}\big((\infty,c]\big)} \hspace{-1em}\varPhi(p)\;dp,\quad \forall\; (\infty,c]\in \Jc.
\]
This equation can be rewritten as follows:
\[
\int_0^{\beta} \varPhi_{\Gc}(p)\;dp = \int_0^{\beta} \varPhi(p)\;dp,\quad \forall\;\beta\in \varPhi\big((0,1]\big).
\]

%

\begin{definition}
\label{d:risk-averse-quantile}
A preference relation $\prefeq$ on $\Q_{\textup{b}}$ is \emph{risk-averse}, if
$\varPhi_{\Gc} \prefeq \varPhi$,
for every $\varPhi\in \Q_{\textup{b}}$ and every $\sigma$-subalgebra $\Gc$ of the Borel $\sigma$-algebra $\B$ on $\R$.
\end{definition}

\begin{theorem}
\label{t:dual-ra}
Suppose a total preorder $\prefeq$ on $\Q_{\textup{b}}$ is monotonic, continuous,  and satisfies the dual independence axiom.
Then it is risk averse if and only if it has the  numerical representation  \eqref{schmeidler} with
a nondecreasing and concave function $w:[0,1]\to[0,1]$.
\end{theorem}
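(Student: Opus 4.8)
The plan is to build on the representation \eqref{schmeidler} provided by Theorem~\ref{t:schmeidler}, equivalently $U(\varPhi)=\int_0^1\varPhi(p)\,\ds\mu$ with $w(p)=\mu\big((0,p]\big)$, and to characterize risk aversion through a single property of the map $\varPhi\mapsto\varPhi_{\Gc}$. By the paper's own definitions, $\varPhi_{\Gc}$ is the quantile function of the conditional expectation $\Ec_{\varPhi|\Gc}$ of the identity random variable on $\big(\R,\B,\mu_{\varPhi}\big)$, where $\mu_{\varPhi}$ is the distribution whose quantile function is $\varPhi$. Applying Jensen's inequality for conditional expectations to the convex functions $x\mapsto(t-x)^{+}$ gives $\int_0^1\big(t-\varPhi_{\Gc}(p)\big)^{+}dp\le\int_0^1\big(t-\varPhi(p)\big)^{+}dp$ for every $t\in\R$, together with $\int_0^1\varPhi_{\Gc}(p)\,dp=\int_0^1\varPhi(p)\,dp$. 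Writing $L_{\varUpsilon}(\beta)=\int_0^{\beta}\varUpsilon(p)\,dp$ and using the identity $L_{\varUpsilon}(\beta)=\sup_{t\in\R}\big\{\beta t-\int_0^1(t-\varUpsilon(p))^{+}\,dp\big\}$, this says exactly that $L_{\varPhi_{\Gc}}(\beta)\ge L_{\varPhi}(\beta)$ for all $\beta\in(0,1)$, with equality at $\beta=1$; that is, $\varPhi_{\Gc}$ dominates $\varPhi$ in the convex (``integrated quantile'') order.

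To prove sufficiency, suppose $w$ is nondecreasing and concave. I would decompose $w$ (which satisfies $w(0)=0$) into a nonnegative combination of the elementary concave functions $p\mapsto\min(p,s)$, $s\in(0,1]$, and $p\mapsto p$, and, should $w$ jump at $0$, $p\mapsto\1_{(0,1]}(p)$. By Fubini the representation then becomes $U(\varPhi)=c_{0}\,\varPhi(0^{+})+c_{1}\,L_{\varPhi}(1)+\int_{(0,1]}L_{\varPhi}(s)\,d\nu(s)$ with $c_{0},c_{1}\ge0$ and $\nu\ge0$. Because $\Ec_{\varPhi|\Gc}$ is bounded below by $\varPhi(0^{+})$, we also have $\varPhi_{\Gc}(0^{+})\ge\varPhi(0^{+})$; combining this with $L_{\varPhi_{\Gc}}\ge L_{\varPhi}$ pointwise and $L_{\varPhi_{\Gc}}(1)=L_{\varPhi}(1)$ yields $U(\varPhi_{\Gc})\ge U(\varPhi)$, i.e.\ $\varPhi_{\Gc}\prefeq\varPhi$, so the preorder is risk averse.

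To prove necessity, I would test risk aversion against explicit simple quantile functions. With $\varPhi=\1_{(p,1]}$ and the trivial $\sigma$-algebra $\Gc=\{\emptyset,\R\}$ one has $\varPhi_{\Gc}=(1-p)\1_{(0,1]}$, so $\varPhi_{\Gc}\prefeq\varPhi$ forces $w(p)\ge p$. For arbitrary $0<p_{1}<p_{2}<p_{3}<1$, let $\varPhi=\sum_{i=1}^{4}x_{i}\1_{(p_{i-1},p_{i}]}$ with $p_{0}=0$, $p_{4}=1$, $x_{1}<x_{2}<x_{3}<x_{4}$, and take $\Gc=\sigma\big(\{x_{1}\},\{x_{4}\}\big)$; this $\sigma$-algebra leaves $x_{1}$ and $x_{4}$ untouched and replaces $x_{2},x_{3}$ by their common conditional mean $m\in(x_{2},x_{3})$. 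Using $U(\varPhi)=\sum_{i}x_{i}\big[w(p_{i})-w(p_{i-1})\big]$ and the analogous formula for $\varPhi_{\Gc}$, a short computation gives
\[
U(\varPhi_{\Gc})-U(\varPhi)=(x_{3}-x_{2})\Big[w(p_{2})-\tfrac{p_{3}-p_{2}}{p_{3}-p_{1}}\,w(p_{1})-\tfrac{p_{2}-p_{1}}{p_{3}-p_{1}}\,w(p_{3})\Big],
\]
and risk aversion forces the bracket to be nonnegative, which is exactly concavity of $w$ at $p_{2}$ relative to $p_{1}$ and $p_{3}$. Thus $w$ is concave on $(0,1)$; combined with $w(p)\ge p$, $w(0)=0$, $w(1)=1$ and monotonicity, this forces $w(1^{-})=1$ and then concavity of $w$ on all of $[0,1]$.

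I expect the main obstacle to be the convex-order domination of the first paragraph: establishing the Jensen inequalities for an arbitrary $\varPhi\in\Q_{\textup{b}}$ and an arbitrary sub-$\sigma$-algebra $\Gc$ of $\B$ (not merely a partition-generated one), and, on the representation side, carrying the finitely additive measure of \eqref{schmeidler}---whose distribution function $w$ need not be continuous at $0$---through the integration-by-parts step. By contrast, the computations in the necessity direction are routine, the only subtlety being that the $\sigma$-algebra which isolates $x_{1},x_{4}$ while coarsening $x_{2},x_{3}$ is $\sigma(\{x_{1}\},\{x_{4}\})$ and not $\sigma(\{x_{2},x_{3}\})$.
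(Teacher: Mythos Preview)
Your necessity argument is essentially the paper's: both test risk aversion on a four-point distribution with a $\sigma$-subalgebra that isolates the two extreme values and averages the two middle ones, and both arrive at the same three-point concavity inequality for $w$. The paper uses the concrete values $-3,-2,-1,0$ and generates $\Gc$ by the intervals $(-\infty,-3]$ and $(-\infty,-1]$; you use generic $x_1<x_2<x_3<x_4$ and $\Gc=\sigma(\{x_1\},\{x_4\})$. Restricted to the support these $\sigma$-algebras induce the same partition $\{x_1\},\{x_2,x_3\},\{x_4\}$, so the computations coincide. One cosmetic difference: the paper allows $p_3\le 1$, which yields concavity on $(0,1]$ directly; you take $p_3<1$ and then use your preliminary step $w(p)\ge p$ (which the paper relegates to Remark~\ref{r:E-insuf}) to push concavity to the closed interval. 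Either way works.

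Where you genuinely add to the paper is the sufficiency direction. The paper's proof only shows that risk aversion forces $w$ to be concave and leaves the converse implicit (``we only need to prove that $w(\cdot)$ is concave''). Your argument---showing via Jensen's inequality for conditional expectations that $\varPhi\mapsto\varPhi_{\Gc}$ increases the integrated-quantile (Lorenz) function $L_\varPhi$ pointwise with equality at $\beta=1$, then decomposing a concave $w$ with $w(0)=0$ as a nonnegative mixture of the kernels $p\mapsto\min(p,s)$, $p\mapsto p$, and (for a possible jump at $0$) $\1_{(0,1]}$, so that $U(\varPhi)$ becomes a nonnegative combination of $L_\varPhi(s)$, $L_\varPhi(1)$, and $\varPhi(0^+)$---is the standard second-order dominance route and is correct. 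The delicate point you flag, the possible atom of $w$ at $0$ in the finitely-additive representation \eqref{quantile-intmu}, is exactly what your observation $\varPhi_{\Gc}(0^+)\ge\varPhi(0^+)$ handles. In short: your proof is correct, matches the paper on necessity, and fills in what the paper omits on sufficiency.
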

\begin{proof} In view of Theorem \ref{t:schmeidler}, we only need to prove that $w(\cdot)$ is concave.
Consider any $0<p_1<p_2 < p_3\le 1$.
Define a four-point distribution with mass $p_1$ at $-3$, mass $p_2-p_1$ at $-2$, mass $p_3-p_2$ at $-1$,
and mass $1-p_3$ at $0$.
The corresponding quantile function $\varPhi$ has, according to \eqref{schmeidler}, the utility
$U(\varPhi) = -\big[w(p_1)+w(p_2)+w(p_3)\big]$. For a $\sigma$-subalgebra generated by $G_1=(-\infty,-3]$
 and $G_2=(-\infty,-1]$, the corresponding
conditional expectation has value $-3$ with probability $p_1$, value $(2p_1-p_2-p_3)/(p_3-p_1)$
with probability $p_3-p_1$, and value 0 with probability $1-p_3$.
The corresponding quantile function $\varPhi_{\Gc}$ has the utility
\[
U(\varPhi_{\Gc}) = -\bigg[ w(p_1)\frac{-p_1-p_2+2p_3}{p_3-p_1} + w(p_3)\frac{-2p_1+p_2+p_3}{p_3-p_1}\bigg].
\]
Owing to  risk aversion, $U(\varPhi_{\Gc}) \ge U(\varPhi)$. After elementary manipulations, we obtain the inequality
\[
w(p_1)\frac{p_3-p_2}{p_3-p_1} + w(p_3)\frac{p_2-p_1}{p_3-p_1} \le w(p_2).
\]
Let $\alpha\in (0,1)$ and let $p_2=\alpha p_1 + (1-\alpha)p_3$. Then the last inequality reads:
\[
\alpha w(p_1) + (1-\alpha) w(p_2) \le w\big( \alpha p_1 + (1-\alpha)p_3\big).
\]
This is equivalent to the concavity of $w(\cdot)$ on $(0,1]$.
\end{proof}
\begin{remark}
\label{r:E-insuf}
{\rm
If we assumed only that the quantile function of the expected value is preferred, that is
$\varPhi_{\Gc_0} \prefeq \varPhi$, where $\Gc_0=\{\R,\emptyset\}$ is the trivial $\sigma$-subalgebra, then we would not be able to infer the concavity of the
function $w(\cdot)$.

In fact, the relation $\varPhi_{\Gc_0} \prefeq \varPhi$ for all
$\varPhi$ is equivalent to
the inequality $w(p)\ge p$ for all $p\in (0,1]$. Indeed, consider a two-point distribution, with mass $p$ at 0 and mass $1-p$ at 1. Then
$U(\varPhi) = 1-w(p)$ and $U\big(\varPhi_{\Gc_0}\big) = 1-p$. Thus $w(p)\ge p$.
To prove the converse implication, we use the inequality $w\big(F(z)\big)\ge F(z)$ in \eqref{schmeidler} to obtain
\[
 U(\varPhi)\le -\int_{-\infty}^0 F(z) \;dz + \int_0^{\infty}\big[ 1 - F(z)\big]\;dz = U\big(\varPhi_{\Gc_0}\big),
\]
as required.

This is in contrast to the expected utility case, when preference of the expected value was sufficient to
derive preference of all conditional expectations (\emph{cf.} Remark \ref{r:eisk-aversion}).
}
\end{remark}

\section{Preferences Among Random Vectors}
\label{s:vectors}

\subsection{Expected Utility Theory for Random Vectors}
\label{s:utility-vectors}

Suppose $(\varOmega,\F,P)$ is a probability space and the prospect space $\Z$ is the space of random vectors $Z:\varOmega\to \Sc$,
where $\Sc$ is a Polish space equipped with its Borel $\sigma$-algebra $\B$.
The distribution of a random vector $Z\in \Z$ is the probability measure $\mu_Z$ on $\B$ defined as $\mu_Z=P\comp Z^{-1}$.
We say that $Z$ and $D$ \emph{have the same law} and write $Z\Dsim W$, if $\mu_Z=\mu_W$.

The preference relation $\prefeq$ on $\Z$ is called \emph{law invariant} if
$Z \Dsim W$ implies that $Z \sim W$.
Every preference relation $\succeq$ on $\Pc(\Sc)$, the space of probability measures on $\Sc$, defines a
law invariant preference relation $\prefeq$ on $\Z$ as follows:
\[
Z \prefeq W \iff \mu_Z \succeq \mu_W.
\]
The converse statement is true, if we additionally require that every probability measure $\mu$ on $\Sc$ is a distribution of some $Z\in \Z$.
This can be guaranteed if $(\varOmega,\F,P)$ is a standard atomless probability space (see \cite[Thm. 11.7.5]{Dudley} and \cite{Skorohod}).
In this case, we can consider an operation on random variables
in $\Z$ corresponding to the operation of taking a convex combination of measures on $\Sc$.

For three elements $Z$, $V$, and $W$ in $\Z$ we say that  $W$ is a \emph{lottery} of $Z$ and $V$ with probabilities $\alpha \in (0,1)$ and $(1-\alpha)$,
if an event $A\in \F$ of probability $\alpha$ exists,
such that the conditional distribution of $W$, given $A$, is the same as the (unconditional) distribution of $Z$;
while the conditional distribution of $W$, given $\widebar{A}=\varOmega\setminus A$, is the same as the unconditional distribution of $V$.
In this case, the probability measure $\mu_W$ induced by $W$ on $\Sc$ is the corresponding convex combination of the probability measures $\mu_Z$ and $\mu_V$ of $Z$ and $V$, respectively:
\[
\mu_W = \alpha \mu_Z + (1-\alpha) \mu_V.
\]
We write the lottery symbolically as
\[
W = \alpha Z \lott (1-\alpha) V.
\]
It should be stressed that only the distribution $\mu_W$ of the lottery is defined uniquely, not the random variable $W$ itself. However, if the preference
relation $\prefeq$ on $\Z$ is law invariant, it makes sense to compare lotteries.

For law invariant preferences on the space of random vectors with values in $\Sc$, we  introduce axioms corresponding to the axioms
of the expected utility theory for distributions.
\begin{description}
\item[\emph{Independence Axiom for Random Vectors:}] For all $Z,V,W \in \Z$ one has
\[
Z \pref V \;\Longrightarrow\;
\alpha Z \lott (1-\alpha) W \pref \alpha V \lott (1-\alpha) W,\quad \forall\, \alpha\in (0,1),
\]
\item[\emph{Archimedean Axiom for Random Vectors:}] If $Z \pref V \pref W$, then  $\alpha,\beta\in (0,1)$ exist such that
\[
\alpha Z \lott (1-\alpha) W \pref V \pref \beta Z \lott (1-\beta) W.
\]
\end{description}
These conditions allow us to reproduce the results of Section \ref{s:affine-utility} in the language of random vectors.
 Directly from Theorem \ref{t:vNM1} we obtain the following result.
\begin{corollary}
\label{t:affine-utility-vectors}
Suppose the total preorder $\prefeq$ on $\Z$ satisfies the independence and Archimedean axioms for random vectors. Then  a numerical
representation $U:\Z\to\R$ of $\,\prefeq$ exists, which satisfies for all $Z,V\in \Z$ and all $\alpha\in [0,1]$ the equation
\[
U(\alpha Z \lott (1-\alpha) V) = \alpha U(Z)+(1-\alpha) U(V).
\]
\end{corollary}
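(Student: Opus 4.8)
The plan is to deduce the corollary from Theorem \ref{t:vNM1} by transporting the preorder from the space $\Z$ of random vectors to the space $\Pc(\Sc)$ of probability measures through the law-invariance correspondence, and then reading off the lottery identity from linearity of the functional produced there.

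First I would use that $\prefeq$ is law invariant together with the fact that, since $(\varOmega,\F,P)$ is a standard atomless probability space, the map $Z \mapsto \mu_Z = P\comp Z^{-1}$ sends $\Z$ \emph{onto} $\Pc(\Sc)$. This lets me define a relation $\succeq$ on $\Pc(\Sc)$ by declaring $\mu_Z \succeq \mu_W$ whenever $Z \prefeq W$; law invariance makes this independent of the chosen representatives, and reflexivity, transitivity and completeness of $\prefeq$ transfer, so $\succeq$ is a total preorder on $\Pc(\Sc)$. Strict preference and indifference transfer accordingly, so $\mu_Z \succ \mu_W \iff Z \pref W$.

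Next I would check that $\succeq$ satisfies the independence and Archimedean axioms of \S\ref{s:prospect-lotteries}. The point is that, by the definition of a lottery, the distribution of $\alpha Z \lott (1-\alpha) W$ equals $\alpha\mu_Z + (1-\alpha)\mu_W$; since $Z\mapsto\mu_Z$ is onto, every convex combination $\alpha\mu + (1-\alpha)\lambda$ of measures in $\Pc(\Sc)$ is the law of some lottery of random vectors. Hence the Independence Axiom for Random Vectors becomes, term by term, the Independence Axiom for $\succeq$ on $\Pc(\Sc)$, and in the same way the Archimedean Axiom for Random Vectors yields the Archimedean Axiom for $\succeq$.

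Finally I would apply Theorem \ref{t:vNM1} to $\succeq$ on $\Pc(\Sc)$, obtaining a linear functional $\widehat U$ on $\M(\Sc)$ whose restriction to $\Pc(\Sc)$ represents $\succeq$, and set $U(Z) \eqdef \widehat U(\mu_Z)$ for $Z\in\Z$. Then $Z \pref V \iff \mu_Z \succ \mu_V \iff \widehat U(\mu_Z) > \widehat U(\mu_V) \iff U(Z) > U(V)$, so $U$ is a numerical representation of $\prefeq$; and, combining the lottery identity with the linearity of $\widehat U$, one gets $U(\alpha Z \lott (1-\alpha) V) = \widehat U(\alpha\mu_Z + (1-\alpha)\mu_V) = \alpha\widehat U(\mu_Z) + (1-\alpha)\widehat U(\mu_V) = \alpha U(Z) + (1-\alpha)U(V)$. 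There is no deep obstacle here; the only step requiring care is the well-definedness of $\succeq$ on $\Pc(\Sc)$ — namely invoking law invariance and the surjectivity of $Z\mapsto\mu_Z$ on a standard atomless space — which is precisely what guarantees that the two axioms on $\Z$ pass faithfully into the hypotheses of Theorem \ref{t:vNM1}.
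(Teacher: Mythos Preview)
Your proposal is correct and is precisely the argument the paper has in mind: the paper states the corollary as following ``directly from Theorem~\ref{t:vNM1}'' without writing out a proof, and your transport of the preorder to $\Pc(\Sc)$ via law invariance on a standard atomless space, verification of the two axioms there, and pullback of the linear functional is exactly how that direct deduction goes. The only point worth noting is that law invariance and the standard atomless assumption are not restated in the corollary itself but are standing hypotheses of \S\ref{s:utility-vectors}; you have correctly identified and used them.
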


In order to invoke the integral representation from \S \ref{s:integral-utility}, we need to introduce an appropriate topology on the space $\Z$
and assume continuity of the preorder $\prefeq$ in this topology. For this purpose we adopt the topology of convergence in distribution.
Recall that a sequence of random vectors $Z_n:\varOmega\to\Sc$ converges in distribution to a random vector $Z:\varOmega\to\Sc$, if
the sequence of probability measures $\{\mu_{Z_n}\}$ converges weakly to the measure $\mu_Z$.

We  can now recall Theorem \ref{t:vNM2} to obtain an integral representation of the utility functional.
\begin{corollary}
\label{t:integral-utility-vectors}
Suppose the total preorder $\prefeq$ on $\Z$ is law invariant, continuous, and satisfies the independence axiom for random vectors.
Then a continuous and bounded function $u:\Sc\to \R$ exists, such that the functional
\begin{equation}
\label{vNMrep-vec}
U(Z) = \Eb\big[u(Z)\big] = \int_{\varOmega}u\big(Z(\omega)\big)\;P(d\omega)
\end{equation}
is a numerical representation of $\,\prefeq$ on $\Z$.
\end{corollary}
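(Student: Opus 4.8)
The plan is to reduce the statement to Theorem~\ref{t:vNM2} by pushing the preorder forward along the distribution map $q:\Z\to\Pc(\Sc)$, $q(Z)=\mu_Z$, and then lifting the resulting integral representation back to $\Z$. I work under the \emph{standing assumption of this subsection} that $(\varOmega,\F,P)$ is a standard atomless probability space, so that $q$ is onto $\Pc(\Sc)$ and the lotteries $\alpha Z\lott(1-\alpha)W$ can be realized as genuine elements of $\Z$.

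First I would define a relation $\succeq$ on $\Pc(\Sc)$ by declaring $\mu\succeq\nu$ whenever $Z\prefeq W$ for some $Z,W\in\Z$ with $\mu_Z=\mu$, $\mu_W=\nu$. Surjectivity of $q$ makes this defined on all of $\Pc(\Sc)\times\Pc(\Sc)$; law invariance ($Z\Dsim W\Rightarrow Z\sim W$) makes it independent of the chosen representatives; and reflexivity, transitivity and completeness are inherited from $\prefeq$, so $\succeq$ is a total preorder with $Z\prefeq W\iff\mu_Z\succeq\mu_W$ for all $Z,W$. Next I would check that $\succeq$ satisfies the two hypotheses of Theorem~\ref{t:vNM2}. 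For the independence axiom on $\Pc(\Sc)$: given $\mu\pref\nu$ in the strict part of $\succeq$, $\lambda\in\Pc(\Sc)$ and $\alpha\in(0,1)$, pick $Z,V,W$ with laws $\mu,\nu,\lambda$ and $Z\pref V$; the independence axiom for random vectors gives $\alpha Z\lott(1-\alpha)W\pref\alpha V\lott(1-\alpha)W$, and since these lotteries have laws $\alpha\mu+(1-\alpha)\lambda$ and $\alpha\nu+(1-\alpha)\lambda$, the definition of $\succeq$ yields $\alpha\mu+(1-\alpha)\lambda\pref\alpha\nu+(1-\alpha)\lambda$. For continuity: equip $\Pc(\Sc)$ with the (metrizable Prohorov) topology of weak convergence, and recall that $Z_n\to Z$ in distribution means exactly $\mu_{Z_n}\wto\mu_Z$. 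To see a sublevel set of $\succeq$ is closed it suffices to check sequential closedness: if $\nu_n\succeq\mu$ and $\nu_n\wto\nu$, choose $Z_n,W,Z$ with laws $\nu_n,\mu,\nu$, so $Z_n\to Z$ in distribution and $Z_n\prefeq W$; since $\{V:V\prefeq W\}$ is closed, hence sequentially closed, we get $Z\prefeq W$, i.e.\ $\nu\succeq\mu$. The other sublevel set is symmetric, so $\succeq$ is continuous (and, being continuous, also satisfies the Archimedean axiom).

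Finally I would invoke Theorem~\ref{t:vNM2}: there is a continuous bounded $u:\Sc\to\R$ such that $\mu\mapsto\int_\Sc u\,d\mu$ is a numerical representation of $\succeq$. Setting $U(Z)=\int_\Sc u(z)\,\mu_Z(dz)=\Eb[u(Z)]$, where the last equality is the change-of-variables formula for the image measure $\mu_Z=P\comp Z^{-1}$, one has $Z\pref W\iff\mu_Z\pref\mu_W\iff\int_\Sc u\,d\mu_Z>\int_\Sc u\,d\mu_W\iff U(Z)>U(W)$, which is the assertion. I expect the only delicate point to be the bookkeeping of the first two steps rather than any single estimate: one must verify that $q$ behaves as a well-behaved quotient — surjective by richness of $(\varOmega,\F,P)$, order-preserving and well-defined by law invariance, and topology-preserving because convergence in distribution is precisely pulled back from weak convergence — so that the independence axiom for random vectors and continuity descend to $\Pc(\Sc)$ exactly in the form Theorem~\ref{t:vNM2} requires.
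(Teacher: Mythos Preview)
Your proposal is correct and follows exactly the route the paper intends: push the law-invariant preorder down to $\Pc(\Sc)$ via $q(Z)=\mu_Z$, verify that the induced relation inherits completeness, continuity and the independence axiom, apply Theorem~\ref{t:vNM2}, and pull the representation back by change of variables. The paper compresses all of this into the single sentence ``We can now recall Theorem~\ref{t:vNM2}'' after having set up the $\Z\leftrightarrow\Pc(\Sc)$ correspondence and the convergence-in-distribution topology in \S\ref{s:utility-vectors}, so you have simply spelled out what the paper leaves implicit.
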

It should be stressed, however, that the assumption of continuity with respect to the topology of weak convergence is rather strong.
For example, if we assume only that for every $Z \in \Z$ the sets
\[
\{V\in \Z: V \prefeq Z\}\quad\text{and} \quad \{V\in \Z: Z\prefeq V\}
\]
are closed in the space $\Lc_1(\varOmega,\F,P;\Sc)$,
the existence of a utility function is not guaranteed.

Monotonicity and risk aversion considerations from section \ref{s:ra-mu} translate to the case of random vectors in a straightforward way.

Suppose $\Sc$ is a separable Banach space, with a partial order relation $\ge$.
In the definition below, the relation $\ge$ applied to random vectors is understood in the almost sure sense.

\begin{definition}
\label{d:monotonic-vector}
The total preorder $\prefeq$ is called monotonic with respect to the partial order $\ge$, if $Z \ge V \;\Longrightarrow\; Z  \prefeq V$.
\end{definition}

In this section, we shall always understand the monotonicity of a preorder $\prefeq$ in the sense
of Definition \ref{d:monotonic-vector}.


The following result is a direct consequence of Theorem \ref{t:vNM-mono}.
\begin{corollary}
\label{c:vNM-mono}
Suppose the total preorder $\prefeq$ on $\Z$ is monotonic, continuous, and satisfies the independence axiom for random vectors. Then a nondecreasing, continuous and
bounded function $u:\Sc\to \R$ exists, such that the functional \eqref{vNMrep-vec}
is a numerical representation of $\,\prefeq$ on $\Z$.
\end{corollary}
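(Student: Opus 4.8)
The plan is to reduce the statement to Theorem \ref{t:vNM-mono} through the law-invariance correspondence between random vectors and probability measures described at the beginning of \S\ref{s:utility-vectors}. Since $\prefeq$ is law invariant and $(\varOmega,\F,P)$ is a standard atomless space, the rule $\mu_Z \succeq \mu_W \iff Z \prefeq W$ defines a total preorder $\succeq$ on all of $\Pc(\Sc)$. The first step is to transport each hypothesis on $\prefeq$ to $\succeq$: the independence axiom for random vectors becomes the independence axiom on $\Pc(\Sc)$ because a lottery $\alpha Z \lott (1-\alpha) W$ has distribution $\alpha \mu_Z + (1-\alpha)\mu_W$; continuity of $\prefeq$ with respect to convergence in distribution is, by definition, continuity of $\succeq$ with respect to weak convergence; and monotonicity of $\prefeq$ yields monotonicity of $\succeq$ in the sense of Definition \ref{d:monotonic} by taking, for $z\ge v$ in $\Sc$, the constant random vectors $Z\equiv z$ and $V\equiv v$, so that $Z\ge V$ almost surely, hence $Z\prefeq V$, i.e. $\delta_z\succeq\delta_v$.

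The second step is to apply Theorem \ref{t:vNM-mono} to $\succeq$ on $\Pc(\Sc)$, obtaining a nondecreasing, continuous, and bounded $u:\Sc\to\R$ for which $\mu\mapsto\int_\Sc u\,d\mu$ is a numerical representation of $\succeq$. Pulling this back along the correspondence and using $\int_\Sc u(z)\,\mu_Z(dz) = \Eb[u(Z)]$ shows that the functional \eqref{vNMrep-vec} is a numerical representation of $\prefeq$ on $\Z$, which is the assertion. Equivalently, one may invoke Corollary \ref{t:integral-utility-vectors} directly to obtain the continuous bounded $u$ together with the representation, and then merely append the monotonicity observation: for $z\ge v$, with $Z\equiv z$ and $V\equiv v$ one has $u(z)=U(Z)\ge U(V)=u(v)$, mirroring the short argument in the proof of Theorem \ref{t:vNM-mono}.

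These verifications are routine; the only point deserving care — and the only place the standing assumptions of the section enter — is that $\succeq$ really is defined on the whole of $\Pc(\Sc)$, so that Theorem \ref{t:vNM-mono} is applicable. This requires every Borel probability measure on $\Sc$ to be the law of some element of $\Z$, which is precisely what the standard atomless hypothesis on $(\varOmega,\F,P)$ provides (\emph{cf.} \cite[Thm. 11.7.5]{Dudley}, \cite{Skorohod}). If law invariance were dropped, the reduction — and indeed the meaning of the lottery-based independence axiom itself — would collapse, so the corollary is to be read under the standing assumptions of \S\ref{s:utility-vectors}.
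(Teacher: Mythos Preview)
Your proposal is correct and follows exactly the route the paper intends: the paper states that the corollary is ``a direct consequence of Theorem~\ref{t:vNM-mono},'' and your argument---transporting the hypotheses along the law-invariance correspondence $\mu_Z \succeq \mu_W \iff Z \prefeq W$ and then invoking Theorem~\ref{t:vNM-mono} (or, equivalently, Corollary~\ref{t:integral-utility-vectors} plus the one-line monotonicity check $u(z)=U(\delta_z)$)---is precisely how that reduction is carried out. Your remark that law invariance is a standing assumption of \S\ref{s:utility-vectors}, needed both for the lottery axiom to be meaningful and for $\succeq$ to be defined on all of $\Pc(\Sc)$, is well taken.
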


We now focus on the case when 
every $Z\in\Z$ the Bochner integral (the expected value)
\[
\Eb[Z]= \int_\varOmega Z(\omega)\;P(d\omega),
\]
is well-defined (for integration of Banach space valued
 random vectors, see \cite[{\S} II.2]{DiestelUhl}).
 Then for every $\sigma$-subalgebra $\Gc$ of $\F$ the conditional expectation
$\Eb[Z|\Gc]:\varOmega\to\Sc$ is defined as a $\Gc$-measurable function satisfying
\[
\int_G \Eb[Z|\Gc](\omega)\;P(d\omega) = \int_G Z(\omega)\;P(d\omega), \quad \forall\;G\in\Gc,
 \]
(see,\emph{ e.g.}, \cite[{\S}2.1]{LedouxTalagrand}).

\begin{definition}
\label{d:risk-averse-vec}
A preference relation $\prefeq$ on $\Z$ is \emph{risk-averse}, if
$\Eb[Z|\Gc] \prefeq Z$,
for every $Z\in \Z$ and every $\sigma$-subalgebra $\Gc$ of $\F$.
\end{definition}

The following corollary is a direct consequence of Remark \ref{r:eisk-aversion}, because
Definition \ref{d:risk-averse-vec} implies that $\Eb[Z] \prefeq Z$.
\begin{corollary}
\label{c:vNM2psi-ra}
Suppose a total preorder $\prefeq$ on $\Z$ is continuous, risk-averse, and satisfies the independence axiom for random vectors. Then a concave function
$u :\Sc \to \R$ exists such that the functional \eqref{vNMrep-vec}
is a numerical representation of $\,\prefeq$ on $\Z$.
\end{corollary}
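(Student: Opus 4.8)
The plan is to obtain the integral representation first and then extract concavity from risk aversion, exactly paralleling Theorem~\ref{t:vNM2psi-ra} and Remark~\ref{r:eisk-aversion}. First I would reduce to Corollary~\ref{t:integral-utility-vectors}; the only hypothesis of that corollary not explicitly assumed here is law invariance, and this is forced by continuity with respect to the topology of convergence in distribution. Indeed, if $Z\Dsim W$ then the constant sequence $W,W,W,\dots$ converges in distribution both to $W$ and to $Z$ (since $\mu_W\wto\mu_Z$ trivially); applying continuity to the closed sets $\{V\in\Z:V\prefeq W\}$ and $\{V\in\Z:W\prefeq V\}$, each of which contains every term of that sequence, we obtain $Z\prefeq W$ and $W\prefeq Z$, hence $Z\sim W$. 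With law invariance established, Corollary~\ref{t:integral-utility-vectors} provides a continuous and bounded $u:\Sc\to\R$ such that $U(Z)=\Eb[u(Z)]$ is a numerical representation of $\prefeq$ on $\Z$, so it remains only to show that $u$ is concave.

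For concavity I would invoke risk aversion with the trivial $\sigma$-subalgebra $\Gc=\{\varOmega,\emptyset\}$, for which $\Eb[Z|\Gc]=\Eb[Z]$ is the constant-valued random vector equal to the Bochner mean. Definition~\ref{d:risk-averse-vec} then yields $\Eb[Z]\prefeq Z$ for every $Z\in\Z$, and since $U$ is a numerical representation this gives $U(\Eb[Z])\ge U(Z)$, that is,
\[
u\big(\Eb[Z]\big)\;\ge\;\Eb\big[u(Z)\big],\qquad \forall\,Z\in\Z .
\]
This is Jensen's inequality for $u$ against an arbitrary (Bochner-integrable) distribution; specializing to two-valued random vectors $Z=\1_A z_1+\1_{\widebar A}z_2$ with $P(A)=\alpha$ — which lie in $\Z$ because $(\varOmega,\F,P)$ is a standard atomless space — produces $u(\alpha z_1+(1-\alpha)z_2)\ge\alpha u(z_1)+(1-\alpha)u(z_2)$ for all $z_1,z_2\in\Sc$ and all $\alpha\in(0,1)$, which is precisely concavity of $u$ on the separable Banach space $\Sc$. (Equivalently, the displayed Jensen inequality and concavity are equivalent, as already used in the proof of Theorem~\ref{t:vNM2psi-ra}.)

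I do not expect a serious obstacle: the substantive work was done in Theorem~\ref{t:vNM2} and its corollaries, and the passage from Jensen's inequality to concavity is routine. The single point deserving care is the reduction to the distribution setting, namely verifying that continuity in the convergence-in-distribution topology both forces law invariance (so that $\prefeq$ descends to a continuous total preorder on $\Pc(\Sc)$ to which Theorem~\ref{t:vNM2} applies) and is compatible with surjectivity of $Z\mapsto\mu_Z$ on a standard atomless space, so that the numerical representation on $\Pc(\Sc)$ pulls back to one on $\Z$. One should also record in passing that the $u$ obtained is in fact continuous and bounded, which is stronger than the bare statement of the corollary.
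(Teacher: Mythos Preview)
Your proposal is correct and follows essentially the same route as the paper, which simply notes that the corollary is a direct consequence of Remark~\ref{r:eisk-aversion} since Definition~\ref{d:risk-averse-vec} yields $\Eb[Z]\prefeq Z$. Your explicit derivation of law invariance from continuity in the convergence-in-distribution topology is a detail the paper leaves implicit (the section having been set up so that the preorder on $\Z$ corresponds to one on $\Pc(\Sc)$), but the core argument---obtain the integral representation via Corollary~\ref{t:integral-utility-vectors}, then extract Jensen's inequality from $\Eb[Z]\prefeq Z$---is identical.
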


Again, as discussed in Remark \ref{r:eisk-aversion}, it would be sufficient to assume that $\Eb[Z]\prefeq Z$
for all $Z\in \Z$, but we shall need Definition \ref{d:risk-averse-vec} also in the next subsection, where such
simplification will not be justified.

\subsection{Dual Utility Theory for Random Variables}
\label{s:dual-utility-variables}

The dual utility theory can be formulated in the prospect space $\Z$ of real-valued random variables defined on a probability space $(\varOmega,\F,P)$.
The axioms formulated in section \ref{s:dual-prospects} for quantile functions can be equivalently formulated for comonotonic
random variables.
Recall that real random variables $Z_i$, $i=1,\dots,n$, are \emph{comonotonic}, if
\[
\big(Z_i(\omega)-Z_i(\omega')\big)\big(Z_j(\omega)-Z_j(\omega')\big) \ge 0
\]
for all $\omega,\omega'\in\varOmega$ and all $i,j=1,\dots,n$.

The following axioms were formulated in \cite{Yaari:1987}, when the theory of dual utility was axiomatized.
\begin{description}
\item[\emph{Dual Independence Axiom for Random Variables:}] For all comonotonic random variables $Z$, $V$, and $W$  in $\Z$ one has
\[
Z \pref V \;\Longrightarrow\;
\alpha Z + (1-\alpha) W \pref \alpha V + (1-\alpha) W,\quad \forall\, \alpha\in (0,1),
\]
\item[\emph{Dual Archimedean Axiom for Random Variables:}] For all comonotonic random variables $Z$, $V$, and $W$  in $\Z$, satisfying the relations
\[
Z \pref V \pref W,
\]
there exist $\alpha,\beta\in (0,1)$ such that
\[
\alpha Z + (1-\alpha) W \pref V \pref \beta Z + (1-\beta) W.
\]
\end{description}
In addition to that, in \cite{Yaari:1987} the preorder $\prefeq$ was assumed monotonic
in the sense of Defi\-ni\-tion~\ref{d:monotonic-vector}.

It is clear that for comonotonic random variables the first two axioms are equivalent to the axioms discussed in \S \ref{s:dual-prospects}. Furthermore, if a preorder $\prefeq$ is monotonic in the sense of
Defi\-ni\-tion~\ref{d:monotonic-vector}, then the corresponding preorder on the space of quantile
 functions is monotonic in the sense of Definition \ref{d:monotonic_dual}.

\begin{theorem}
\label{t:affine-dual-variables}
Suppose $\Z$ is the set of random variables on a standard and atomless probability space $(\varOmega,\F,P)$.
If the total preorder $\prefeq$ on $\Z$ is law invariant, and satisfies the dual independence and Archimedean axioms for random variables,
then a numerical
representation $U:\Z\to\R$ of $\,\prefeq$ exists, which satisfies for all comonotonic $Z,V\in \Z$ and all $\alpha,\beta\in \R_{+}$ the equation
\begin{equation}
\label{comonotonic-additivity}
U( \alpha Z + \beta V) =  \alpha U(Z)+ \beta U(V).
\end{equation}
Moreover,
\begin{equation}
\label{dual-certainty}
U(c\1) = c,\quad \forall\,c\in \R.
\end{equation}
\end{theorem}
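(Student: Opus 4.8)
The plan is to reduce the statement to Theorem~\ref{t:affine-quantile} by transferring the preorder from $\Z$ to the prospect space $\Q$ of quantile functions, and then to pull the resulting linear functional back to $\Z$. The bridge is the classical fact that on a standard atomless probability space comonotonic random variables may be realized simultaneously as nondecreasing functions of one uniform variable, and that under such a realization nonnegative linear combinations of comonotonic variables correspond to the same combinations of their quantile functions.

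First I would fix a random variable $U_0$ on $(\varOmega,\F,P)$ uniformly distributed on $(0,1)$; since the space is standard and atomless this exists, and for every $\varPhi\in\Q$ the variable $\varPhi\comp U_0$ belongs to $\Z$ and has quantile function $\varPhi$. Using law invariance I would define a relation on $\Q$ by declaring $\varPhi\prefeq\varPsi$ whenever $\varPhi\comp U_0\prefeq\varPsi\comp U_0$ in $\Z$; law invariance makes this independent of the realizations, and since every $\varPhi\in\Q$ is realized it is a total preorder on $\Q$. I would then verify that this induced preorder satisfies the dual independence and Archimedean axioms of \S\ref{s:dual-prospects}: given $\varPhi,\varPsi,\varUpsilon\in\Q$, the variables $\varPhi\comp U_0$, $\varPsi\comp U_0$, $\varUpsilon\comp U_0$ are pairwise comonotonic, any nonnegative combination of them is again a monotone function of $U_0$, and $\alpha\varPhi+(1-\alpha)\varUpsilon$ is exactly the quantile function of $\alpha(\varPhi\comp U_0)+(1-\alpha)(\varUpsilon\comp U_0)$; hence the dual independence and Archimedean axioms for comonotonic random variables in $\Z$ translate verbatim, through law invariance, into the corresponding axioms for the induced preorder on $\Q$ (this is the equivalence noted in the text just before the theorem). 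Theorem~\ref{t:affine-quantile} now supplies a linear functional $U$ on $\lin(\Q)=\Q-\Q$ whose restriction to $\Q$ is a numerical representation of the induced preorder.

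Finally I would set $U(Z)=U\big(F^{-1}_{\mu_Z}\big)$ for $Z\in\Z$. Law invariance makes this well defined, and for arbitrary $Z,V\in\Z$ one has $Z\pref V\iff F^{-1}_{\mu_Z}\pref F^{-1}_{\mu_V}\iff U(F^{-1}_{\mu_Z})>U(F^{-1}_{\mu_V})\iff U(Z)>U(V)$, so $U$ is a numerical representation of $\prefeq$ on all of $\Z$. If $Z,V$ are comonotonic and $\alpha,\beta\in\R_{+}$, then $F^{-1}_{\mu_{\alpha Z+\beta V}}=\alpha F^{-1}_{\mu_Z}+\beta F^{-1}_{\mu_V}$, an element of $\Q\subset\lin(\Q)$, so the linearity of $U$ on $\lin(\Q)$ gives $U(\alpha Z+\beta V)=\alpha U(Z)+\beta U(V)$, which is~\eqref{comonotonic-additivity}. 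Since the quantile function of $c\1$ is the constant function with value $c$, we get $U(c\1)=c\,U(\1)$; as $U$ may be multiplied by any positive constant without changing the preorder it represents, I would rescale so that $U(\1)=1$, which yields~\eqref{dual-certainty}.

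The main thing to get right is the passage in the second paragraph: because the axioms imposed on $\Z$ only constrain \emph{comonotonic} triples, the argument must, for every relation it needs between quantile functions, exhibit an underlying relation between comonotonic random variables inducing it, and the common-uniform realization $\varPhi\mapsto\varPhi\comp U_0$ is precisely what makes all the required triples comonotonic at once. The only other point requiring a word is the normalization $U(\1)=1$, which is merely a choice of scale for the otherwise determined linear functional and is available once the preorder is non-degenerate on certainty outcomes.
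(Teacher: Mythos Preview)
Your proof is correct and follows essentially the same route as the paper: transfer the preorder to $\Q$ via a fixed uniform variable, check that the dual axioms carry over because the realizations $\varPhi\comp U_0$ are automatically comonotonic, invoke Theorem~\ref{t:affine-quantile}, and pull the linear functional back to $\Z$ by $U(Z)=\mathcal{U}(F_Z^{-1})$, using $F^{-1}_{\alpha Z+\beta V}=\alpha F^{-1}_Z+\beta F^{-1}_V$ for comonotonic $Z,V$. The paper secures the normalization $U(\1)>0$ by invoking monotonicity (which, as you implicitly notice, is not listed among the theorem's hypotheses), so your caveat about needing the preorder to be non-degenerate on certainty outcomes is exactly the same residual assumption the paper's own proof relies on.
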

\begin{proof}
Let $Y$ be a uniform random variable on $(\varOmega,\F,P)$.
The preference relation $\prefeq$ on $\Z$ induces a preference relation $\succeq$ on $\Q$ by the formula
\[
\varPhi \succeq \varPsi \iff \varPhi(Y) \prefeq \varPsi(Y).
\]
The preference relation $\succeq$ does not depend on the particular choice of $Y$, because $\prefeq$ is law invariant.

For comonotonic random variables $Z$ and $V$, and for $\alpha\in (0,1)$, we have
\[
F_{\alpha Z+(1-\alpha)V}^{-1} = \alpha F_Z^{-1} + (1-\alpha) F_V^{-1}.
\]
Thus, the dual independence and Archimedean axioms for the relation $\prefeq$ among random variables imply the same properties for the
relation $\succeq$ on $\Q$. By virtue of Theorem \ref{t:affine-quantile}, a linear functional $\mathcal{U}:\lin(\Q)\to\R$ exists,
whose restriction to $\Q$ is a numerical representation of the preorder~$\succeq$.

Then $U(Z)=\mathcal{U}\big(F_Z^{-1}\big)$, $Z\in\Z$, is a numerical representation of $\prefeq$.
For comonotonic $Z,V\in\Z$ and  $\alpha,\beta \ge 0$, the linearity of $\mathcal{U}$ yields
\begin{align*}
U(\alpha Z +  \beta V) &= \mathcal{U}\big(F_{\alpha Z+\beta V}^{-1}\big)
 =  \mathcal{U}\big(\alpha F_{Z}^{-1} + \beta F_{V}^{-1}\big)\\
 & = \alpha \mathcal{U}\big(F_{Z}^{-1}\big) + \beta \mathcal{U}\big(F_{V}^{-1}\big)
= \alpha U(Z)+ \beta U(V),
\end{align*}
which proves \eqref{comonotonic-additivity}.

By monotonicity, $U(\1)=\mathcal{U}(F_{\1}^{-1}) >0 $. We may normalize $U(\cdot)$ to have $U(\1)=1$. For $c>0$
the equation \eqref{dual-certainty}  follows from \eqref{comonotonic-additivity}. Then
\[
U(-c\1) = \mathcal{U}(F_{-c\1}^{-1}) =
\mathcal{U}(-F_{c\1}^{-1}) = -\mathcal{U}(F_{c\1}^{-1}) = -U(c\1) = -c,
\]
owing to the linearity of $\mathcal{U}(\cdot)$.
\end{proof}

In our further considerations, we assume that $\Z$ is the space of bounded random variables equipped with the
the norm topology of the space $\Lc_1(\varOmega,\F,P)$.
\begin{theorem}
\label{t:rank-dependent-variables-L1}
Suppose $\Z$ is the set of bounded random variables on a standard and atomless probability space $(\varOmega,\F,P)$.
If the total preorder $\prefeq$ on $\Z$ is law invariant, continuous, monotonic,
and satisfies the dual independence axioms for random variables,
then a bounded, nondecreasing, and continuous function $w:[0,1]\to \R_{+}$ exists, such that the functional
\begin{equation}
\label{quantile-int-var}
U(Z) = \int_0^1 F_Z^{-1}(p)\; d w(p),\quad Z\in \Z,
\end{equation}
 is a numerical representation of $\,\prefeq$.
\end{theorem}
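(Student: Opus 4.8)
The plan is to reduce the claim to Theorem~\ref{t:rank-dependent} by transporting the preorder from the space of bounded random variables to the prospect space $\Q_{\textup{b}}$ of bounded quantile functions, exactly as in the proof of Theorem~\ref{t:affine-dual-variables}. First I would fix a random variable $Y$ on $(\varOmega,\F,P)$ uniformly distributed on $(0,1)$; such a $Y$ exists because the probability space is standard and atomless. On $\Q_{\textup{b}}$ define the induced relation $\succeq$ by
\[
\varPhi \succeq \varPsi \iff \varPhi(Y) \prefeq \varPsi(Y),\qquad \varPhi,\varPsi\in\Q_{\textup{b}}.
\]
Law invariance of $\prefeq$ makes $\succeq$ independent of the choice of $Y$, and $\succeq$ is a total preorder because $\varPhi(Y)\in\Z$ for every $\varPhi\in\Q_{\textup{b}}$ while $F_Z^{-1}\in\Q_{\textup{b}}$ for every bounded $Z$.

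Next I would verify that $\succeq$ inherits the three hypotheses of Theorem~\ref{t:rank-dependent}. For the dual independence axiom: given $\varPhi,\varPsi,\varUpsilon\in\Q_{\textup{b}}$, the random variables $\varPhi(Y),\varPsi(Y),\varUpsilon(Y)$ are comonotonic (being nondecreasing functions of the same $Y$), $\alpha\varPhi+(1-\alpha)\varUpsilon\in\Q_{\textup{b}}$, and $\alpha\varPhi(Y)+(1-\alpha)\varUpsilon(Y)=\big(\alpha\varPhi+(1-\alpha)\varUpsilon\big)(Y)$; hence the dual independence axiom for random variables translates verbatim to $\succeq$. Monotonicity is immediate: if $\varPhi\ge\varPsi$ pointwise on $(0,1]$ then $\varPhi(Y)\ge\varPsi(Y)$ almost surely, so $\varPhi(Y)\prefeq\varPsi(Y)$, that is $\varPhi\succeq\varPsi$, which is precisely Definition~\ref{d:monotonic_dual}. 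For $\Lc_1$-continuity, note that since $Y$ is uniform, $\Eb\big|\varPhi(Y)-\varPsi(Y)\big|=\int_0^1|\varPhi(p)-\varPsi(p)|\,dp=\dist(\varPhi,\varPsi)$, so $\varPhi\mapsto\varPhi(Y)$ is a continuous (indeed isometric) map of $(\Q_{\textup{b}},\dist)$ into $\Z$; closedness of the preference sets of $\prefeq$ in $\Z$ therefore pulls back to closedness of the corresponding sets of $\succeq$ in the $\Lc_1$-topology. (As in the earlier proofs, continuity supplies the dual Archimedean axiom, so nothing further need be assumed.)

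With these verifications in hand, Theorem~\ref{t:rank-dependent} applies to $\succeq$ and produces a bounded, nondecreasing, continuous function $w:[0,1]\to\R$ — of the form $w(p)=\mu\big((0,p]\big)$ for a nonnegative measure $\mu$, hence $\R_{+}$-valued with $w(0)=0$ — such that $U_0(\varPhi)=\int_0^1\varPhi(p)\,dw(p)$ is a numerical representation of $\succeq$ on $\Q_{\textup{b}}$. Finally I would set $U(Z)=U_0\big(F_Z^{-1}\big)$ for $Z\in\Z$. Since $F_Z^{-1}(Y)\Dsim Z$ and $\prefeq$ is law invariant, $Z\sim F_Z^{-1}(Y)$, so $U(Z)>U(W)\iff F_Z^{-1}\succ F_W^{-1}\iff F_Z^{-1}(Y)\pref F_W^{-1}(Y)\iff Z\pref W$; thus $U$ is a numerical representation of $\prefeq$ on $\Z$ having the form \eqref{quantile-int-var}.

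I do not expect a genuine obstacle here: all the substance sits in Theorem~\ref{t:rank-dependent}, and what remains is the careful but routine translation of law invariance, comonotonicity, monotonicity, and the $\Lc_1$-topology between $\Z$ and $\Q_{\textup{b}}$. The points deserving the most attention are confirming that the isometry $\varPhi\mapsto\varPhi(Y)$ transfers continuity in the direction that is actually needed, and checking that the $w$ delivered by Theorem~\ref{t:rank-dependent} is genuinely $\R_{+}$-valued — the latter being only a harmless normalization, since the Stieltjes integral is unchanged by adding a constant to $w$.
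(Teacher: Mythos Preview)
Your proposal is correct and follows essentially the same route as the paper: induce a preorder $\succeq$ on $\Q_{\textup{b}}$ via $\varPhi\mapsto\varPhi(Y)$ for a uniform $Y$, verify that monotonicity, the dual independence axiom, and $\Lc_1$-continuity transfer (the latter via the isometry $\dist(\varPhi,\varPsi)=\|\varPhi(Y)-\varPsi(Y)\|_1$), invoke Theorem~\ref{t:rank-dependent}, and pull the representation back by $U(Z)=U_0(F_Z^{-1})$. Your write-up is in fact slightly more explicit than the paper's in justifying why $U$ represents $\prefeq$ (via $F_Z^{-1}(Y)\Dsim Z$ and law invariance) and in noting that $w$ is $\R_+$-valued.
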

\begin{proof}
Recall that the preorder $\prefeq$ induces a preorder $\succeq$ on the space $\Q_{\textup{b}}$ of bounded quantile functions. The preorder $\succeq$
is defined in the proof of Theorem \ref{t:affine-dual-variables}.
It satisfies the monotonicity condition on $\Q_{\textup{b}}$, because for a uniform random variable $Y$ we have the chain of
equivalence relations:
\begin{equation}
\label{equiv-orders}
\varPhi \ge \varPsi \iff \varPhi(Y) \ge \varPsi(Y) \Longrightarrow \varPhi(Y) \prefeq \varPsi(Y) \iff \varPhi \succeq \varPsi.
\end{equation}
The dual independence axiom for $\succeq$ follows from the dual independence axiom for~$\prefeq$ with comonotonic random variables.
In order to use Theorem \ref{t:rank-dependent}, we only need to verify the continuity condition for $\succeq$.

Consider a convergent sequence of functions $\{\varPhi_n\}$ and a function $\varPsi$ in $\Q_{\textup{b}}$, such that
$\varPhi_n \succeq \varPsi$, $n=1,2\dots$, and let $\varPhi$ be the $\Lc_1$-limit of  $\{\varPhi_n\}$, that is,
\[
\lim_{n\to\infty} \int_0^1 |\varPhi_n(p)-\varPhi(p)|\;dp =0.
\]
For a uniform random variable $Y$, we define $Z_n=\varPhi_n(Y)$, $Z=\varPhi(Y)$, and $V=\varPsi(Y)$. By \eqref{equiv-orders},
$Z_n\prefeq V$. Substituting the definitions of $Z_n$ and $Z$ and changing variables we obtain
\begin{align*}
\|Z_n-Z\|_1 &= \int_{\varOmega}|Z_n(\omega)-Z(\omega)|\;P(d\omega) = \int_{\varOmega}|\varPhi_n(Y(\omega))-\varPhi(Y(\omega))|\;P(d\omega)  \\
&=\int_0^1 |\varPhi_n(p)-\varPhi(p)|\;dp \to 0, \quad \text{as}\quad n\to \infty.
\end{align*}
By the continuity of $\prefeq$ in $\Z$, we conclude that $Z\prefeq V$. By \eqref{equiv-orders}, $\varPhi\succeq\varPsi$. In a similar way we consider
the case when $\varPsi \succeq \varPhi_n$, $n=1,2\dots$ and we prove that $\varPsi \succeq \varPhi$. Consequently, the preorder $\succeq$ is continuous in $\Q_{\textup{b}}$.

By Corollary \ref{t:rank-dependent}, a numerical representation $\mathcal{U}(\cdot):\Q_{\textup{b}}\to\R$ of $\succeq$ exists, which has the
integral representation
\[
\mathcal{U}(\varPhi) = \int_0^1 \varPhi(p)\; d w(p),\quad \varPhi\in \Q_{\textup{b}},
\]
for some continuous nondecreasing function $w:(0,1]\to\R_{+}$. Setting $U(Z)=\mathcal{U}\big(F_Z^{-1}\big)$, we obtain~\eqref{quantile-int-var}.
\end{proof}

Another possibility is to consider the topology of uniform convergence,
induced by the norm
\[
\|Z\|_{\infty} = \sup_{\omega\in \varOmega}| Z(\omega)|.
\]
This means that we identify $\Z$ with the Banach space $B(\varOmega,\F)$ of bounded functions defined on $\varOmega$,
which can be obtained as uniform limits of simple functions. We  assume that the preorder $\prefeq$ is continuous
in this space.

\begin{theorem}
\label{t:rank-dependent-variables-Choquet}
Suppose $\Z=B(\varOmega,\F)$ and the probability space $(\varOmega,\F,P)$ is standard and atomless.
If the total preorder $\prefeq$ on $\Z$ is law invariant, continuous, monotonic,
and satisfies the dual independence axiom for random variables,
then  a nondecreasing function $w:[0,1]\to[0,1]$ exists, such that the functional
\begin{equation}
\label{schmeidler-var}
 U(Z)= -\int_{-\infty}^0 w\big(F_Z(\eta)\big) \;d\eta + \int_0^{\infty}\big[ 1 - w\big(F_Z(\eta)\big)\big]\;d\eta
\end{equation}
 is a numerical representation of $\,\prefeq$.
\end{theorem}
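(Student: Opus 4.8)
\emph{Proof (plan).}
The plan is to reduce the statement to Theorem~\ref{t:schmeidler} by transporting the preorder $\prefeq$ from $\Z=B(\varOmega,\F)$ to the prospect space $\Q_{\textup{b}}$ of bounded quantile functions, in exactly the manner used in the proof of Theorem~\ref{t:rank-dependent-variables-L1}, but replacing the $\Lc_1$-topology with the topology of uniform convergence. Since $(\varOmega,\F,P)$ is standard and atomless, I would fix a random variable $Y$ with the uniform distribution on $(0,1)$ (chosen so that its range is dense in $(0,1)$ of full measure) and set $\varPhi\succeq\varPsi\iff\varPhi(Y)\prefeq\varPsi(Y)$. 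Law invariance of $\prefeq$ ensures that $\succeq$ does not depend on the choice of $Y$ and is a total preorder on $\Q_{\textup{b}}$, because every $\varPhi\in\Q_{\textup{b}}$ yields the element $\varPhi(Y)\in\Z$ (a uniform limit of simple functions, hence in $B(\varOmega,\F)$) and conversely every bounded $Z\in\Z$ has quantile function $F_Z^{-1}\in\Q_{\textup{b}}$. Moreover $F_Z^{-1}(Y)\Dsim Z$, so law invariance gives $F_Z^{-1}\succeq F_V^{-1}\iff Z\prefeq V$; consequently, once $\succeq$ has a numerical representation $\mathcal U$ on $\Q_{\textup{b}}$, the functional $U(Z)=\mathcal U(F_Z^{-1})$ represents $\prefeq$ on $\Z$.

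Next I would verify that $\succeq$ inherits the three hypotheses of Theorem~\ref{t:schmeidler}. For $\varPhi,\varPsi\in\Q_{\textup{b}}$ the variables $\varPhi(Y)$ and $\varPsi(Y)$ are comonotonic, and $\alpha\varPhi(Y)+(1-\alpha)\varPsi(Y)=(\alpha\varPhi+(1-\alpha)\varPsi)(Y)$ for $\alpha\in(0,1)$, so the dual independence axiom for comonotonic random variables gives the dual independence axiom for $\succeq$. Monotonicity of $\succeq$ with respect to the pointwise order follows from monotonicity of $\prefeq$ in the sense of Definition~\ref{d:monotonic-vector}, via the chain of equivalences \eqref{equiv-orders}. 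Continuity of $\succeq$ under uniform convergence follows from continuity of $\prefeq$ in $B(\varOmega,\F)$: because $Y$ has essentially full range, $\sup_{\omega}|\varPhi_n(Y(\omega))-\varPhi(Y(\omega))|=\sup_{0<p<1}|\varPhi_n(p)-\varPhi(p)|=\|\varPhi_n-\varPhi\|$, so $\varPhi_n\to\varPhi$ uniformly in $\Q_{\textup{b}}$ implies $\varPhi_n(Y)\to\varPhi(Y)$ in $\|\cdot\|_\infty$, and closedness of $\{V:V\prefeq Z\}$ and $\{V:Z\prefeq V\}$ in $\Z$ transfers to closedness of $\{\varPsi:\varPsi\succeq\varPhi\}$ and $\{\varPsi:\varPhi\succeq\varPsi\}$ in $\Q_{\textup{b}}$.

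Theorem~\ref{t:schmeidler} then yields a nondecreasing $w:[0,1]\to[0,1]$ with $w(0)=0$, $w(1)=1$, such that the functional \eqref{schmeidler} is a numerical representation of $\succeq$ on $\Q_{\textup{b}}$. Substituting $\varPhi=F_Z^{-1}$ and using that the distribution function associated with $F_Z^{-1}$ coincides with $F_Z$ outside an at most countable set (hence the Lebesgue integrals in \eqref{schmeidler} are unchanged), the pulled-back functional $U(Z)=\mathcal U(F_Z^{-1})$ takes precisely the form \eqref{schmeidler-var}, and the equivalence $Z\pref V\iff U(Z)>U(V)$ follows from $F_Z^{-1}\succeq F_V^{-1}\iff Z\prefeq V$ together with the fact that $\mathcal U$ represents $\succeq$. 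I expect the only delicate point to be the bookkeeping that makes $\varPhi\mapsto\varPhi(Y)$ an exact correspondence and an isometry for the supremum norm — i.e.\ the use of the standard atomless hypothesis to choose $Y$ with (essentially) full range, and the reconciliation of the left- and right-continuous inverses — rather than anything structurally new, since the substantive work has already been carried out in Theorem~\ref{t:schmeidler}.
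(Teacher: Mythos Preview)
Your proposal is correct and follows essentially the same route as the paper: transport $\prefeq$ to a preorder $\succeq$ on $\Q_{\textup{b}}$ via a uniform variable $Y$, verify that $\succeq$ inherits monotonicity, the dual independence axiom, and continuity in the sup norm (using $\|\varPhi_n(Y)-\varPhi(Y)\|_\infty=\|\varPhi_n-\varPhi\|$), apply Theorem~\ref{t:schmeidler}, and pull the representation back by $U(Z)=\mathcal U(F_Z^{-1})$. The paper's proof is the same argument, only somewhat terser about the bookkeeping you flag at the end.
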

\begin{proof}
Recall that the preorder $\prefeq$ induces a preorder $\succeq$ on $\Q_{\textup{b}}$ defined in the proof of Theorem \ref{t:affine-dual-variables}.
It satisfies the monotonicity condition on $\Q_{\textup{b}}$, as in \eqref{equiv-orders}.
In order to use Theorem \ref{t:schmeidler}, we need to verify the continuity condition for $\succeq$.

Consider a uniformly convergent sequence of functions $\{\varPhi_n\}$ and a function $\varPsi$ in $\Q_{\textup{b}}$, such that
$\varPhi_n \succeq \varPsi$, $n=1,2\dots$, and let $\varPhi$ be the  uniform limit of  $\{\varPhi_n\}$, that is,
\[
\lim_{n\to\infty} \sup_{0\le p\le 1} |\varPhi_n(p)-\varPhi(p)|  =0.
\]
For a uniform random variable $Y$, we define $Z_n=\varPhi_n(Y)$, $Z=\varPhi(Y)$, and $V=\varPsi(Y)$. By \eqref{equiv-orders},
$Z_n\prefeq V$. Substituting the definitions of $Z_n$ and $Z$ and changing variables we obtain
\begin{align*}
\|Z_n-Z\|_{\infty} &= \sup_{\omega\in \varOmega} |\varPhi_n(Y(\omega))-\varPhi(Y(\omega))| \\
&= \sup_{0\le p \le 1} |\varPhi_n(p)-\varPhi(p)| \to 0, \quad \text{as}\quad n\to \infty.
\end{align*}
By the continuity of $\prefeq$ in $\Z$, we conclude that $Z\prefeq V$. By \eqref{equiv-orders}, $\varPhi\succeq\varPsi$. In a similar way we consider
the case when $\varPsi \succeq \varPhi_n$, $n=1,2\dots$ and we prove that $\varPsi \succeq \varPhi$.
Consequently, the preorder $\succeq$ is continuous in $\Q_{\textup{b}}$.

By Theorem \ref{t:schmeidler}, a numerical representation $\mathcal{U}(\cdot):\Q_{\textup{b}}\to\R$ of $\succeq$ exists, which has the
integral representation \eqref{schmeidler}
for some continuous nondecreasing function $w:(0,1]\to\R_{+}$. Setting $U(Z)=\mathcal{U}\big(F_Z^{-1}\big)$, we obtain \eqref{schmeidler-var}.
\end{proof}

Formula \eqref{schmeidler-var} is a special case of the Choquet integral of the variable $Z$ (see  \cite{Choquet}).
Clearly, if the assumptions of Theorem \ref{t:rank-dependent-variables-L1} are satisfied, so are the assumptions of Theorem \ref{t:rank-dependent-variables-Choquet}.
In this case, the representation \eqref{schmeidler-var} follows (by integration by parts and change of variables) from \eqref{quantile-int-var},
provided that the function $w(\cdot)$ in \eqref{quantile-int-var} is normalized so that  $w(1)=1$.

If we additionally assume that the preference relation $\prefeq$ is risk-averse in the sense of Definition \ref{d:risk-averse-vec},
we obtain the following corollary from Theorem \ref{t:dual-ra}.

\begin{corollary}
\label{c:dual-ra}
Suppose a total preorder $\prefeq$ on $\Z$ is continuous, monotonic, and satisfies the dual independence axiom for random vectors.
Then it is risk-averse if and only if it has the  numerical representation  \eqref{schmeidler-var} with
a nondecreasing and concave function $w:[0,1]\to[0,1]$ such that $w(0)=0$ and $w(1)=1$.
\end{corollary}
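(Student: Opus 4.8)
The plan is to split the equivalence into its two implications and to lean on Theorem~\ref{t:rank-dependent-variables-Choquet} for the representation itself. The standing hypotheses (law invariance, monotonicity, continuity with respect to uniform convergence on $B(\varOmega,\F)$, and the dual independence axiom) are exactly those of Theorem~\ref{t:rank-dependent-variables-Choquet}, so $\prefeq$ already admits the representation \eqref{schmeidler-var} for some nondecreasing $w:[0,1]\to[0,1]$, which one normalizes by $w(0)=0$ and $w(1)=1$. Moreover, as established inside the proof of that theorem, the induced preorder $\succeq$ on the space $\Q_{\textup{b}}$ of bounded quantile functions, given by $\varPhi\succeq\varPsi\iff\varPhi(Y)\prefeq\varPsi(Y)$ for a uniform $Y$, is monotonic, continuous in the uniform topology, satisfies the dual independence axiom, and is represented by the functional \eqref{schmeidler} built from the \emph{same} $w$. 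Hence everything reduces to relating risk aversion of $\prefeq$ in the sense of Definition~\ref{d:risk-averse-vec} to concavity of $w$.

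For the necessity of concavity I would transfer risk aversion from $\Z$ to $\Q_{\textup{b}}$ and then apply Theorem~\ref{t:dual-ra}. Given $\varPhi\in\Q_{\textup{b}}$ and a $\sigma$-subalgebra $\Gc$ of the Borel $\sigma$-algebra on $\R$, set $Z=\varPhi(Y)$ with $Y$ uniform, so that $Z$ has the distribution $\mu$ whose quantile function is $\varPhi$, and put $\Gc'=Z^{-1}(\Gc)\subseteq\F$. A Doob--Dynkin argument gives $\Eb[Z\mid\Gc']=\Ec_{\varPhi|\Gc}(Z)$, where $\Ec_{\varPhi|\Gc}$ is the $\Gc$-measurable function on $\R$ from~\S\ref{s:ra-quantile}; consequently the law of $\Eb[Z\mid\Gc']$ under $P$ is the push-forward of $\mu$ by $\Ec_{\varPhi|\Gc}$, which is precisely the law of $\varPhi_{\Gc}(Y)$. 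By law invariance and risk aversion of $\prefeq$ we obtain $\varPhi_{\Gc}(Y)\sim\Eb[Z\mid\Gc']\prefeq Z=\varPhi(Y)$, i.e.\ $\varPhi_{\Gc}\succeq\varPhi$. Thus $\succeq$ is risk averse on $\Q_{\textup{b}}$ in the sense of Definition~\ref{d:risk-averse-quantile}, so by Theorem~\ref{t:dual-ra} the function $w$ appearing in \eqref{schmeidler}, hence in \eqref{schmeidler-var}, is concave.

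For the sufficiency I would argue directly, and this is where I expect the real work to be: the implication does \emph{not} reduce to Theorem~\ref{t:dual-ra}, because on an atomless space an arbitrary $\sigma$-subalgebra $\Gc\subseteq\F$ yields many more conditional distributions of $\Eb[Z\mid\Gc]$ than the ``comonotone'' ones $\varPhi_{\Gc}$ of the quantile theory --- for a two-point $\mu$ the latter produces only $\mu$ and the point mass at its mean, whereas $\Eb[Z\mid\Gc]$ realizes every mean-preserving contraction of $\mu$. Since the functional \eqref{schmeidler-var} is law invariant and the law of $\Eb[Z\mid\Gc]$ is a mean-preserving contraction of the law of $Z$ (conditional Jensen), it suffices to prove that the rank-dependent functional with a concave distortion is monotone with respect to the convex order: if $\nu$ is dominated by $\mu$ in the convex order, then $U(\nu)\ge U(\mu)$. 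I would obtain this by rewriting \eqref{schmeidler-var}, via integration by parts, as the Stieltjes integral $\int_0^1 F^{-1}_\mu(p)\,dw(p)$, using the quantile characterization of the convex order --- $\int_0^\beta F^{-1}_\nu(p)\,dp\ge\int_0^\beta F^{-1}_\mu(p)\,dp$ for every $\beta$, with equality at $\beta=1$ --- and integrating by parts once more: concavity makes the density of $w$ nonincreasing, so the weighted integral of $F^{-1}_\mu-F^{-1}_\nu$ is nonpositive, the only surviving boundary term (a possible atom of $dw$ at $0$; concavity together with $w(1)=1$ rules out an atom at $1$) having the correct sign because a convex-order contraction keeps its support inside the convex hull of the support of $\mu$. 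An equivalent route is the robust representation $U(Z)=\min\{\Eb_Q[Z]:Q(A)\le w(P(A))\text{ for all }A\in\F\}$, valid because $w\circ P$ is submodular when $w$ is concave, together with the observation that replacing $Z$ by a conditional expectation cannot lower this minimum. Either way $\Eb[Z\mid\Gc]\prefeq Z$, so $\prefeq$ is risk averse. The main obstacle, then, is this convex-order monotonicity of the distortion functional at full generality: $w$ need not be smooth and $Z$ need not have a continuous distribution, so the usual ``Jensen inequality for Choquet integrals'' has to be proved rather than cited.
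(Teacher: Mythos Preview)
Your proposal is correct and, in fact, supplies considerably more than the paper does. The paper offers no proof of the corollary beyond the single sentence preceding it, which asserts that the result ``follows from Theorem~\ref{t:dual-ra}.'' Your necessity argument---transferring risk aversion from $\Z$ to the induced preorder on $\Q_{\textup{b}}$ via $\Gc'=Z^{-1}(\Gc)$ and then invoking Theorem~\ref{t:dual-ra}---is precisely the reduction the paper has in mind, and you have written it out cleanly.

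Where you go beyond the paper is in the sufficiency direction, and your diagnosis there is entirely to the point. Risk aversion on $\Z$ in the sense of Definition~\ref{d:risk-averse-vec} quantifies over \emph{all} sub-$\sigma$-algebras of $\F$, and on an atomless space this produces every mean-preserving contraction of the law of $Z$; by contrast, Definition~\ref{d:risk-averse-quantile} quantifies over sub-$\sigma$-algebras of $\B(\R)$, which---as your two-point example shows---yields a strictly smaller class. Hence the ``if'' direction of Theorem~\ref{t:dual-ra} (which, incidentally, the paper's own proof of that theorem does not address either) would not by itself deliver risk aversion on $\Z$. Your route through convex-order monotonicity of the concave-distortion functional, whether via the integration-by-parts argument on quantiles or via the robust representation over the core of the submodular capacity $w\circ P$, is the standard and correct way to close this gap. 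The paper simply leaves it implicit.
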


Similarly to the case of preferences among quantile functions, we need here the full
Definition \ref{d:risk-averse-vec}. This is in contrast to the expected utility theory
when the preference $\Eb[Z] \prefeq Z$ was sufficient (see Remark \ref{r:E-insuf}).


\bibliographystyle{plain}
\bibliography{Risk-Biblio}

\section*{Appendix}
For convenience, we provide here two integral representation theorems for continuous linear functionals on spaces of signed measures.
They are consequences of Banach's theorem on weakly$^\star$ continuous functionals \cite[VIII.8,Thm. 8]{Banach:1932}.
\begin{theorem}
\label{t:Mdual}
A functional $U:\M(\Sc)\to\R$ is  continuous and linear if and only if there exists $f\in \C_{\textup{b}}(\Sc)$
such that
\begin{equation}
\label{Urep}
U(\mu) = \int_{\Sc} f(z)\, \mu(dz), \quad \forall\,\mu\in \M(\Sc).
\end{equation}
\end{theorem}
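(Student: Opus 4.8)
The plan is to reduce the statement, as already indicated in the paragraph introducing it, to Banach's theorem on weak$^\star$ continuous functionals. First I would dispose of the easy ``if'' direction: if $U(\mu)=\int_{\Sc}f(z)\,\mu(dz)$ with $f\in\C_{\textup{b}}(\Sc)$, then $U$ is obviously linear, and it is continuous because the topology of weak convergence on $\M(\Sc)$ is, by definition, the coarsest topology rendering every map $\mu\mapsto\int_{\Sc}g(z)\,\mu(dz)$, $g\in\C_{\textup{b}}(\Sc)$, continuous. All of the content lies in the converse.

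For that, I would realize $\M(\Sc)$ as a subspace of a dual Banach space. Viewing $\C_{\textup{b}}(\Sc)$ as a Banach space under the supremum norm, each $\mu\in\M(\Sc)$ acts boundedly on it by $\langle\mu,g\rangle=\int_{\Sc}g(z)\,\mu(dz)$, so we get a linear map $\M(\Sc)\to\C_{\textup{b}}(\Sc)^{*}$; it is injective because a regular signed measure is determined by its integrals against bounded continuous functions. The crucial observation is that, under this embedding, the topology of weak convergence of measures is exactly the restriction to $\M(\Sc)$ of the weak$^\star$ topology $\sigma\big(\C_{\textup{b}}(\Sc)^{*},\C_{\textup{b}}(\Sc)\big)$, since convergence in either sense means $\int_{\Sc}g\,d\mu_\alpha\to\int_{\Sc}g\,d\mu$ for every $g\in\C_{\textup{b}}(\Sc)$. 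Hence the hypothesis that $U$ be continuous says precisely that $U$ is a weak$^\star$ continuous linear functional on the subspace $\M(\Sc)$.

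Next I would extend $U$ to all of $\C_{\textup{b}}(\Sc)^{*}$ without losing weak$^\star$ continuity, and then invoke Banach's theorem. Weak$^\star$ continuity of $U$ at $0$ yields functions $f_1,\dots,f_n\in\C_{\textup{b}}(\Sc)$ and $\delta>0$ such that $\max_i|\langle\mu,f_i\rangle|\le\delta$ forces $|U(\mu)|\le 1$; rescaling turns this into the quantitative bound $|U(\mu)|\le\delta^{-1}p(\mu)$ for all $\mu\in\M(\Sc)$, where $p(\phi)=\max_{1\le i\le n}|\langle\phi,f_i\rangle|$ is a seminorm on $\C_{\textup{b}}(\Sc)^{*}$ that is weak$^\star$ continuous, being a finite maximum of weak$^\star$ continuous functionals. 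By the Hahn--Banach theorem, $U$ extends to a linear functional $\widetilde U$ on $\C_{\textup{b}}(\Sc)^{*}$ still dominated by $\delta^{-1}p$; domination by a continuous seminorm makes $\widetilde U$ weak$^\star$ continuous, so Banach's theorem \cite[VIII.8, Thm.~8]{Banach:1932} supplies $f\in\C_{\textup{b}}(\Sc)$ with $\widetilde U(\phi)=\langle\phi,f\rangle$ for every $\phi$. Restricting to $\M(\Sc)$ gives $U(\mu)=\int_{\Sc}f(z)\,\mu(dz)$, with $f$ continuous and bounded by construction.

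I expect the main obstacle to be not any hard estimate but getting these two identifications right: that weak convergence of measures genuinely is the restriction of the weak$^\star$ topology (so that the continuity hypothesis on $U$ is weak$^\star$ continuity on the subspace $\M(\Sc)$), and that the Hahn--Banach extension does not destroy weak$^\star$ continuity --- which is exactly why one should record the explicit bound $|U|\le\delta^{-1}p$ by a \emph{continuous} seminorm before extending, rather than merely knowing $U$ is continuous. Everything else is routine. One could even bypass Banach's theorem altogether: the bound $|U|\le\delta^{-1}p$ forces $\bigcap_i\ker\langle\cdot,f_i\rangle\subseteq\ker U$ on $\M(\Sc)$, so $U$ is a linear combination of the $\langle\cdot,f_i\rangle$ and hence $U=\langle\cdot,f\rangle$ for some $f$ in the linear span of $f_1,\dots,f_n$; but routing through Banach's theorem is the approach advertised in the Appendix.
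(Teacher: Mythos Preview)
Your proof is correct, but it follows a genuinely different route from the paper's. The paper proceeds compact-by-compact: for each compact $K\subset\Sc$ it uses the Riesz identification $\M(K)=\C(K)^*$ and applies Banach's theorem there to obtain a representing function $f_K\in\C(K)$; it then patches these into a single $f$ via $f(z)=U(\delta_z)$, checks continuity and boundedness of $f$ directly using Dirac measures, and finally extends the representation from compactly supported measures to all of $\M(\Sc)$ via tightness (this is where the Polish hypothesis is used). You instead embed $\M(\Sc)$ globally into $\C_{\textup{b}}(\Sc)^*$, read off a weak$^\star$-continuous seminorm bound from continuity at~$0$, Hahn--Banach extend, and apply Banach's theorem once on the full dual; your alternative ending---the linear-algebra fact that $\bigcap_i\ker\langle\cdot,f_i\rangle\subseteq\ker U$ forces $U$ into the span of the $\langle\cdot,f_i\rangle$---is even cleaner and avoids Banach's theorem entirely. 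Your argument is shorter, more conceptual, and does not rely on tightness; the paper's argument is more hands-on and makes explicit where continuity and boundedness of $f$ come from, which fits the paper's expository aim of exhibiting $u(z)=U(\delta_z)$ as the utility function.
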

\begin{proof}
Consider a compact set $K\subset \Sc$, and the space
\[
\M_K = \{\mu\in \M(\Sc): \text{supp}(\mu) \subseteq K\}.
\]
Every continuous linear functional on $\M(\Sc)$ is also a continuous linear  functional on $\M(K)$. The space $\M(K)$ can be identified
with the space of continuous linear functionals on $\C(K)$, the space of continuous functions on $K$. The topology
of weak convergence of measures in $\M(K)$ is exactly the weak$^\star$ topology on $[\C(K)]^*$. By Banach's theorem, every weakly$^\star$ continuous
functional $U(\cdot)$ on the dual space has the form
\begin{equation}
\label{UrepK1}
U(\mu) =\langle f_K,\mu\rangle =  \int_{K} f_K(z)\, \mu(dz),\quad \forall\,\mu\in \M(K),
\end{equation}
where $f_K\in \C(K)$.

Define  $f:\Sc\to \R$ as $f(z) = f_{\{z\}}(z)$.
If $z\in K$, then  $\M(\{z\})\subseteq \M(K)$. From \eqref{UrepK1} we conclude that $f(z)=f_K(z)$.
Consequently, \eqref{UrepK1} can be rewritten as follows:
\begin{equation}
\label{UrepK2}
U(\mu) = \int_{\Sc} f(z)\, \mu(dz), \quad \forall\,\mu\in \M(K),\ \forall\, K \subset \Sc.
\end{equation}

Observe that $f(z)=U(\delta_z)$. If $z_n\to z$, as $n\to\infty$, then
$\delta_{z_n}\wto \delta_{z}$. Owing to the continuity of $U(\cdot)$, we have $f(z_n)= U(\delta_{z_n})\to U(\delta_z)=f(z)$,
which  implies the continuity of $f(\cdot)$ on $\Sc$.

We shall prove that $f(\cdot)$ is bounded. Suppose the opposite,
that for every $n\ge 1$ we can find $z_n\in\Sc$ with $f(z_n)\ge n$. Consider the sequence of measures
$\mu_n =  \delta_{z_n}/\sqrt{n},\quad n=1,2,\dots$. On the one hand, $\mu_n\wto 0$ and thus
$U(\mu_n) \to U(0)$, when $n\to\infty$. On the other hand,
$U(\mu_n) =  {f(z_n)}/{\sqrt{n}} \to \infty$, {as} $n\to\infty$,
which is a contradiction. Consequently, $f\in \C_{\textup{b}}(\Sc)$.

It remains to prove that representation \eqref{UrepK2} holds true for every $\mu\in\M(\Sc)$. Since the space $\Sc$ is Polish,  every $\mu\in\M(\Sc)$
is tight, that is, for every $n=1,2,\dots$, there exists a compact set $K_n$ such that $|\mu|(\Sc\setminus K_n) < 1/n$.
Define the sequence of measures
$\mu_n$, $n=1,2,\dots$, as follows: $\mu_n(A) = \mu(A\cap K_n)$, for all $A\in \B$. By the definition of weak convergence,
$\mu_n\wto\mu$. Each $\mu_n\in \M_{K_n}$ and thus we can use \eqref{UrepK2} and the
continuity of $U(\cdot)$ to write
\[
U(\mu) = \lim_{n\to\infty} U(\mu_n) = \lim_{n\to\infty} \int_{\Sc} f(z)\, \mu_n(dz) = \int_{\Sc} f(z)\, \mu(dz).
\]
The last equation follows from the fact that $f\in \C_{\textup{b}}(\Sc)$ and $\mu_n\wto \mu$.
\end{proof}

\begin{theorem}
\label{t:Mdualpsi}
A functional $U:\M^{\psi}(\Sc)\to\R$ is  continuous and linear if and only if there exists $f\in \C_{\textup{b}}^{\psi}(\Sc)$
such that
\begin{equation}
\label{Ureppsi}
U(\mu) = \int_{\Sc} f(z)\, \mu(dz), \quad \forall\,\mu\in \M^\psi(\Sc).
\end{equation}
\end{theorem}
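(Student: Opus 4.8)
The plan is to reduce the weighted statement to the already-established Theorem~\ref{t:Mdual} by a change of density. The ``if'' direction is immediate: a functional of the form $U(\mu)=\int_{\Sc} f(z)\,\mu(dz)$ with $f\in\C_{\textup{b}}^{\psi}(\Sc)$ is visibly linear, and it is continuous because $f$ is one of the very test functions defining $\psi$-weak convergence, so $\mu_n\to\mu$ $\psi$-weakly forces $U(\mu_n)\to U(\mu)$.

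For the ``only if'' direction I would introduce the map $T\colon\M^{\psi}(\Sc)\to\M(\Sc)$ sending $\mu$ to the signed measure $T\mu$ with density $\psi$ with respect to $\mu$, that is $(T\mu)(A)=\int_A\psi(z)\,\mu(dz)$. First one checks $T$ is well defined: since $1=\psi/\psi\in\C_{\textup{b}}(\Sc)$, the gauge $\psi$ itself lies in $\C_{\textup{b}}^{\psi}(\Sc)$, hence $\int_{\Sc}\psi\,d|\mu|<\infty$ by the definition of $\M^{\psi}(\Sc)$, so $T\mu$ is a finite signed Borel measure (automatically regular, as $\Sc$ is Polish) and thus an element of $\M(\Sc)$. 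The map $T$ is linear, and it is bijective with inverse $\nu\mapsto$ the measure with density $1/\psi$; this inverse does land in $\M^{\psi}(\Sc)$ because $\psi\ge 1$ makes $g/\psi$ bounded for every $g\in\C_{\textup{b}}^{\psi}(\Sc)$, so the relevant integrals against the finite measure $\nu$ are finite.

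The crucial step is that $T$ is a homeomorphism for the two topologies. Here one uses the change-of-variables identity $\int_{\Sc} g(z)\,(T\mu)(dz)=\int_{\Sc} g(z)\psi(z)\,\mu(dz)$ together with the fact that $g\mapsto g\psi$ is a bijection of $\C_{\textup{b}}(\Sc)$ onto $\C_{\textup{b}}^{\psi}(\Sc)$ (with inverse $f\mapsto f/\psi$); hence $\mu_n\to\mu$ $\psi$-weakly if and only if $T\mu_n\wto T\mu$, so (sequential) continuity of a functional on $\M^{\psi}(\Sc)$ is equivalent to continuity of its $T$-transplant on $\M(\Sc)$. I expect this correspondence of topologies to be the only point requiring genuine care; the rest is bookkeeping, and it matches the way Theorem~\ref{t:Mdual} was used in the main text.

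Finally, given a continuous linear $U$ on $\M^{\psi}(\Sc)$, the composition $U\circ T^{-1}$ is continuous and linear on $\M(\Sc)$, so Theorem~\ref{t:Mdual} furnishes $g\in\C_{\textup{b}}(\Sc)$ with $U(T^{-1}\nu)=\int_{\Sc} g\,d\nu$ for all $\nu\in\M(\Sc)$. Substituting $\nu=T\mu$ and invoking the change-of-variables identity gives $U(\mu)=\int_{\Sc} g(z)\psi(z)\,\mu(dz)$ for every $\mu\in\M^{\psi}(\Sc)$. Setting $f=g\psi$, which satisfies $f/\psi=g\in\C_{\textup{b}}(\Sc)$ and hence $f\in\C_{\textup{b}}^{\psi}(\Sc)$, yields the asserted representation \eqref{Ureppsi}.
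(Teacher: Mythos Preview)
Your proposal is correct and follows essentially the same route as the paper: both introduce the density map $\mu\mapsto\psi\cdot\mu$ between $\M^{\psi}(\Sc)$ and $\M(\Sc)$, observe it is a linear bicontinuous bijection intertwining the two topologies via the correspondence $g\leftrightarrow g\psi$ of test functions, and then pull the representation back from Theorem~\ref{t:Mdual}. If anything, you supply more detail than the paper does on why the map is well defined and a homeomorphism.
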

\begin{proof}
Every $\mu \in \M^\psi(\Sc)$ can be associated with a unique $\nu \in \M(\Sc)$, such that $\frac{d\nu}{d\mu} = \psi$.
The mapping $L:\M^\psi(\Sc)\to \M(\Sc)$ defined in this way is linear, continuous, and invertible.
Therefore, each linear continuous functional $U:\M^\psi(\Sc)\to\R$ corresponds to a linear continuous functional $U_0: \M(\Sc)\to\R$ as follows:
$U_0(\nu) = U(L^{-1}\nu)$,
and \emph{vice versa}: for every linear continuous functional $U_0: \M(\Sc)\to\R$ we have a corresponding $U:\M^\psi(\Sc)\to\R$ defined as
$U(\mu) = U_0(L\mu)$.

By Theorem \ref{t:Mdual}, there exists $f_0\in \C_{\textup{b}}(\Sc)$, such that
\[
U_0(\nu) = \int_{\Sc} f_0(z)\, \nu(dz), \quad \forall\,\nu\in \M(\Sc).
\]
Thus, for all $\mu\in \M^\psi(\Sc)$ we have
\[
U(\mu) = U_0(L\mu) = \int_{\Sc} f_0(z)\psi(z)\, \mu(dz).
\]
It follows that the representation \eqref{Ureppsi} is true with function $f = f_0 \psi$, which is an element of $\C_{\textup{b}}^\psi(\Sc)$.
The converse implication is evident.
\end{proof}

\end{document}